\DeclareSymbolFont{rsfscript}{OMS}{rsfs}{m}{b}
\DeclareSymbolFontAlphabet{\mathrsfs}{rsfscript}
\renewcommand{\mathcal}{\mathrsfs}
\newcommand{\DMO}{\DeclareMathOperator}
\numberwithin{equation}{subsection}
\theoremstyle{plain} % default
\newtheorem{theorem}[subsection]{Theorem}
\newtheorem{prop}[subsection]{Proposition}
\newtheorem{lemma}[subsection]{Lemma}
\newtheorem{corollary}[subsection]{Corollary}
\theoremstyle{definition}
\newtheorem{definition}[subsection]{Definition}
\theoremstyle{remark}
\newtheorem{remark}[subsection]{Remarks}
\theoremstyle{remark}
\newtheorem{rem}[subsection]{Remark}
\newtheorem*{case*}{Case}
\renewcommand{\p@enumi}{\thesubsection}
\renewcommand{\p@enumii}{\thesubsection\theenumi}
\newenvironment{num}{\renewcommand{\theenumi}{(\alph{enumi})}
                      
                                          \begin{enumerate} }
                    {\end{enumerate} }
 \newenvironment{conds}{\renewcommand{\theenumi}{(\roman{enumi})}
                       
                        \begin{enumerate} }
                     {\end{enumerate} }
 \newenvironment{subconds}{
                       
                        \begin{enumerate} }
                     {\end{enumerate} }
\newcommand{\bee}{\begin{equation*}}
\newcommand{\eee}{\end{equation*}}
\newcommand{\bs}{\begin{split}}
\newcommand{\es}{\end{split}}
\newcommand{\bc}{\begin{cases}}
\newcommand{\ec}{\end{cases}}
\newcommand{\bml}{\begin{multline}}
\newcommand{\eml}{\end{multline}}
\newcommand{\bmll}{\begin{multline*}}
\newcommand{\emll}{\end{multline*}}
\newcommand{\belb}[1]{\mar{#1}\begin{equation}\label{#1}}
\newcommand{\mpair}[1]{\pair{\,#1\,}}
\newcommand{\mset}[1]{\set{\,#1\,}}
\newcommand{\pair}[1]{\langle #1\rangle}
\newcommand{\set}[1]{\{#1\}}
\newcommand{\CC}{{\mathcal C}}
\newcommand{\CF}{{\mathcal F}}
\newcommand{\CG}{{\mathcal G}}
\newcommand{\CI}{{\mathcal I }}
\newcommand{\CP}{{\mathcal P}}
\newcommand{\CR}{{\mathcal R}}
\newcommand{\CS}{{\mathcal S}}
\newcommand{\CT}{{\mathcal T}}
\newcommand{\CU}{{\mathcal U}}
\newcommand{\CX}{{\mathcal X}}
\newcommand{\bbR}{{\mathbb R}}
\newcommand{\bbS}{{\mathbb S}}
\newcommand{\bbB}{{\mathbb B}}
\def\a{\alpha}
\def\b{\beta}
\def\g{\gamma}
\def\G{\Gamma}
\def\D{\Delta}
\def\l{\lambda}
\def\L{\Lambda}
\def\o{\omega}
\def\O{\Omega}
\def\r{\rho}
\def\s{\sigma}
\def\th{\theta}
\def\t{\tau}
\def\to{\rightarrow}
\def\lr{\longrightarrow}
\def\lsupp#1#2{\kern\scriptspace\vphantom{#2}^{#1}\kern-\scriptspace#2}
\def\lsubb#1#2{\kern\scriptspace\vphantom{#2}_{#1}\kern-\scriptspace#2}
\newcommand{\ssect}{\subsection}
\newcommand{\seq}{{\,\subseteq\,}}
\newcommand{\sneq}{{\,\subsetneq\,}}
\newcommand{\sreq}{{\,\supseteq\,}}
\newcommand{\sm}{{\,\setminus\,}}
\newcommand{\eset}{{\emptyset}}
\newcommand{\ck}[1]{{#1}^\vee}
\newcommand{\wh}{\widehat}
\newcommand{\ol}{\overline}
\DeclareMathOperator{\linspan}{{\mathrm{span}}}
\def\disjun{\sqcup}
\def\bdisjun{\bigsqcup}
\DeclareMathOperator{\supp}{{\mathrm{supp}}}
\DMO{\Supp}{{\mathrm{Supp}}}
\DeclareMathOperator{\Stab}{{\mathrm{Stab}}}
\DeclareMathOperator{\Ind}{{\mathrm{Ind}}}
\DMO{\Rad}{{\mathrm{Rad}}}
\DMO{\Ann}{{\mathrm{Ann}}}
\newcommand{\Id}{\mathrm{Id}}
\DMO {\Add}{{\mathrm{Add}}}
\DMO {\add}{{\mathrm{add}}}
\DMO {\Img}{{\mathrm{Im}}}
\DMO {\coim}{{\mathrm{Coim}}}
\DMO {\coker}{{\mathrm{Coker}}}
\DMO {\colim}{\varinjlim}
\DMO {\plim}{\varprojlim}
\DMO {\mEnd}{{\mathrm{End}}}
\DMO {\mend}{{\mathrm{end}}}
\DMO {\Proj}{{\mathrm{Proj}}}
\DMO {\Ext}{{\mathrm{Ex}t}}
\DMO {\ext}{{\mathrm{ext}}}
\DMO {\tor}{{\mathrm{tor}}}
\DMO {\Tor}{{\mathrm{Tor}}}
\DMO {\Hom}{{\mathrm{Hom}}}
\DMO {\HOM}{{\mathrm{HOM}}}
\DMO {\Modfg}{{\mathrm{-Modfg}}}
\DMO {\modulfg}{{\mathrm{-modfg}}}
\DMO {\modul}{{\mathrm{-mod}}}
\DMO {\Mod}{{\mathrm{-Mod}}}
\DMO {\pdim}{{\mathrm{proj.dim.}}}
\DMO {\PD}{{{\mathrm{Proj.Dim.}}}}
\DMO {\gldim}{{\mathrm{gr.gl.dim.\ }}}
\DMO {\grad}{{\mathrm{rad} }}
\DMO {\Sheaves}{{\mathrm{Sh}}}
\DMO {\Flab}{{\mathrm{Fl}}}
\DMO {\Poinc}{{\mathrm{Poinc}}}
\DMO {\Groth}{{{{\mathrm{K}}}_0}}
\DMO {\Mat}{{\mathrm{Mat}}}
\DMO \Sl{{\mathrm{sl}}}
\DMO \SL{{\mathrm{SL}}}
\DMO \Gl{{\mathrm{gl}}}
\DMO \GL{{\mathrm{GL}}}
\DMO \lcm{\mathrm{{lcm}}}
\DMO \rank{{\mathrm{rank}}}
\DMO \diag{{\mathrm{diag}}}
\DMO\vtx{{\mathrm{Vert}}}
\DMO\pc{{\mathrm{ParClos}}}
\newcommand{\bv}{{\mathbf{v}}}
\newcommand{\bu}{{\mathbf{u}}}
\newcommand{\ba}{{\mathbf{a}}}
\newcommand{\bd}{{\mathbf{d}}}
\newcommand{\bx}{{\mathbf{c}}}
\newcommand{\bb}{{\mathbf{b}}}
\newcommand{\bI}{{\mathbf{I}}}
\newcommand{\bJ}{{\mathbf{J}}}
\newcommand{\bw}{{\mathbf{w}}}
\newcommand{\bsy}{\boldsymbol}
\newcommand{\Int}{{\mathbb Z}}
\newcommand{\Nat}{{\mathbb N}}
\newcommand{\real}{{\mathbb R}}
\DMO{\ob}{ob}
\DMO{\mor}{mor}
\DMO{\tr}{tr}
\DMO{\spec}{spec}
\DMO{\cov}{cov}
\DMO{\Sym}{Sym}
\DMO{\Dih}{Dih}
\DMO{\Spec}{Spec}
\DMO{\domn}{dom}
\DMO{\cod}{cod}
\DMO{\ord}{ord}
\newcommand{\inv}{^{-1}}
\DMO{\stab}{{\mathrm{Stab}}}
\DMO{\inter}{{\mathrm{Int}}}
\DMO{\coeff}{{\mathrm{Coeff}}}
\DMO{\conv}{{\mathrm{Conv}}}
\begin{document}

\title{Geometry of certain finite Coxeter group actions}

\author{M. J.  Dyer and G. I.  Lehrer}
\address{Department of Mathematics 
\\ 255 Hurley Building\\ University of Notre Dame \\
Notre Dame, Indiana 46556, U.S.A.}
\email{dyer.1@nd.edu}
\address{Department of Mathematics, University of Sydney, Sydney. NSW. 2006}
\email{gustav.lehrer@sydney.edu.au}
%\dedicatory{}

%\thanks{}
%\keywords{}

\subjclass[2010]{Primary: 20F55: Secondary: 17B22}
%\date{\today}
\begin{abstract}  We determine a fundamental domain for the diagonal action
of a finite Coxeter group $W$ on $V^{\oplus n}$, where $V$ is the reflection representation.
This is used to give a stratification of $V^{\oplus n}$, which is respected by the group
action, and we study the geometry, topology and combinatorics of this 
stratification. These ideas are used to obtain results on the classification of root 
subsystems up to conjugacy, as well as a character formula for $W$. 
\end{abstract}

\maketitle
%%%%%%%%%%%%%%%%%%%%%%%%%%%%%%%%
%
%                          
%
%
%%%%%%%%%%%%%%%%%%%%%%%%%%%%%%%%
\section*{Introduction}\label{s0}
Let $\Phi$ be a finite root system in the Euclidean space $V=\bbR^\ell$, whose
inner product we shall denote $\mpair{-,-}$. Let $W=W(\Phi)$ be the corresponding
Weyl group. This is a finite reflection group on $V$, and the choice of a simple 
subsystem of $\Phi$ defines a fundamental region, known as the fundamental chamber,
for the action of $W$ on $V$.

The group $W$ acts diagonally on $V^n:=V^{\oplus n}=V\oplus\dots\oplus V$, and  
our first main result, Theorem \ref{thm:fundreg} in \S\ref{s1} below,
 is the determination of a fundamental region $\CC_W^{(n)}$ for 
this $W$-action. This turns out to be a locally
closed, convex subset of $V^n$.
In \S\ref{sec:strat} we show how our fundamental region may be used to obtain a stratification 
of $V^n$ by locally closed subsets which are transformed into each other under the
$W$-action, and are such that the closure of any one is a union of such sets.
This leads to a combinatorial structure which significantly generalises the Coxeter complex
of $W$. We study the topology and combinatorics of this stratification in \S\ref{s:minmax}. 

As applications of these results, we give in \S\ref{ss:chars} a character formula for $W$, which generalises the usual 
Solomon formula for the alternating character of $W$.

Then, in \S\ref{sec:app},
we show how our 
fundamental region may be used to study $W$-orbits of finite subsets of $V$, both ordered and unordered. In \S\ref{sec:typeA}, we apply the results of 
\S\ref{sec:app} to show how the conjugacy classes of type $A$-subsystems of an arbitrary root system $\Phi$ may be determined by inspection of the Dynkin diagram of $\Phi$. Finally,   in \S\ref{sec:arbtype}, we indicate, partly without proof,  how the results of \S\ref{sec:app}--\ref{sec:typeA} may be used to study  conjugacy classes of root subsystems of $\Phi$ and thus conjugacy classes
of reflection subgroups of $W$. Related results on conjugacy of subsystems of root systems
may be found in \cite{Osh},  \cite{LT}, \cite{Ro} and  \cite{T}.

\section{Preliminaries}\label{sp}
\subsection{} \label{rootsyst}  Let  $V$ be a real Euclidean space, i.e. a finite dimensional 
real vector space equipped with a symmetric, positive definite bilinear form 
$\mpair{-,-}\colon V\times V\to \real$.  For non-zero  $\a\in V$,  let 
$s_{\a}\colon V\to V$ denote the orthogonal reflection in $\a$; it is the 
$\real$-linear map  defined by $s_{\a}(v)=v-\mpair{v,\ck \a}\a$ where 
$\ck \a:=\frac{2}{\mpair{\a,\a}}\a$. In this paper, by  a   \emph{root system} $\Phi$  in $V$, 
we shall mean  a subset $\Phi$ of $V$ 
satisfying the following conditions (i)--(iii):
\begin{conds}
\item $\Phi$ is a finite subset of $V\sm\set{0}$.
\item If $\a,\b\in \Phi$, then $s_{a}(\b)\in \Phi$.
\item If $\a,c\a\in \Phi$ with $c\in \real$, then $c\in \set{\pm1}$.\end{conds}

The subgroup $W$ of $\mEnd(V)$ generated by $\mset{s_{\a}\mid \a\in \Phi}$ is a finite 
(real) reflection group i.e. a finite  Coxeter group.
A \emph{simple system} $\Pi$ of $\Phi$ is a linearly independent  subset $\Pi\seq \Phi$  such that 
$\Phi=\Phi_{+}\disjun \Phi_{-}$ where 
$\Phi_{+}:=\Phi\cap \real_{\geq 0}\Pi$ 
and $\Phi_{-}=-\Phi_{+}$ (we use the symbol $\disjun$ to indicate a disjoint union). Fix a simple system $\Pi$ (it is well  known that such 
simple systems exist). Then $\Phi_{+}$ is the corresponding positive system of $\Phi$  
and  $S:=\mset{s_{\a}\mid \a
\in \Pi}\seq W$ is called the set of \emph{simple reflections} of $W$. It is well 
known that $(W,S)$ is a Coxeter system.  The subset 
$T:=\mset{s_{\a}\mid \a\in \Phi}=\mset{wsw^{-1}\mid w\in W, s\in S}$  of $W$ 
 is called the set of reflections of $W$.

\subsection{Dual root system} \label{dualrootsys} If $\Phi$ is a root system in 
$V$, then $\ck \Phi:=\mset{\ck\a\mid \a\in \Phi}$ is  also a root system, called the 
\emph{dual root system} of $\Phi$; it has a system of simple roots $\ck \Pi:=\mset {\ck \a
\mid \a\in \Phi}$ with corresponding positive roots 
$\ck \Phi_{+}:=\mset{\ck \a\mid \a\in \Phi_+}$ and 
associated finite  Coxeter system $(W,S)$.
\subsection{Weyl groups} \label{crystrootsys} 
The root system $\Phi$ is said to be crystallographic
if for all $\a,\b\in \Phi$, one has $\mpair{\a,\ck\b}\in \Int$.  In that case, $W$ is a 
finite Weyl group
and one defines the  \emph{root lattice} $Q(\Phi):=\Int\Pi$ and \emph{weight 
lattice}
$P(\Phi):=\mset{\lambda\in V\mid \mpair{\lambda,\ck\Phi}\seq \Int}$.  The corresponding 
 lattices $Q(\ck\Phi)$  and $P(\ck\Phi)$ for $\ck \Phi$ are called the 
  \emph{coroot lattice} and \emph{coweight lattice}  of $\Phi$  respectively.

\subsection{}\label{refsubgp} A subgroup $W'$  of $V$ generated by a subset  of $T$ 
is called a reflection subgroup.
It has a root system $\Phi_{W'}=\mset{\a\in \Phi\mid s_{\a}\in W'}
$. We call $\Phi_{W'}$ a  (root) subsystem of $\Phi$.  A simple 
system (resp., positive system) of a root subsystem of $\Phi$ will 
be called a \emph{simple} (resp., \emph{positive}) \emph{subsystem} of $\Phi$. It is well 
known that $\Phi_{W'}$ has a   
  unique simple system $\Pi_{W'}$ contained in the set of  
  positive roots $\Phi_{+}$ of $W$; the corresponding positive 
  system  is $\Phi_{W',+}:=\Phi_{+}\cap \Phi_{W'}$.

 The reflection subgroups $W_{I}:=\mpair{I}$ generated by 
 subsets $I$ of $S$ are 
 called \emph{standard parabolic subgroups}  and their 
 conjugates are called 
 \emph{parabolic subgroups}.  If $W'=W_{I}$, then
 $\Pi_{W'}=\mset{\a\in \Pi\mid s_{a}\in I}$ and $\Phi_{W',+}=\Phi\cap \real_{\geq 0}\Pi_{W'}$.
 
 \subsection{Fundamental chamber for the $W$-action on $V$}\label{ss:fundcham}
 The subset  $\CC=\CC_{W}:=\mset{v\in V\mid \mpair{v,\Pi}\seq \real_{\geq 0}}$ of $V$ is 
 called the \emph{fundamental chamber} of $W$. In the following Lemma, we collect several
 standard facts concerning this situation, which may be found in \cite{Bour, StYale}.
 
 \begin{lemma}\label{lem:fundcham}\begin{num}\item  Every $W$ orbit on $V$ contains a unique 
 point of $\CC$.
 \item For $v\in \CC$, the stabiliser $\stab_{W}(v):=\mset{w\in W\mid w(v)=v}$  is equal to 
 the standard parabolic subgroup $W_{I}$ where 
 $I:=\mset{s\in S\mid s(v)=v}=\mset{s_{\a}\mid \a\in \Pi, \quad \mpair{\a,v}=0}$.
\item The set of  parabolic subgroups is equal to the set of   the stabilisers of points of $V$. 
It is also equal to the set of  pointwise stabilisers of subsets of $V$.
\item The intersection of any set of parabolic subgroups of $W$ is 
a parabolic subgroup of $W$. In particular, we have 
for $I,J\subseteq S$, $W_I\cap W_J=W_{I\cap J}$.
 \end{num}\end{lemma}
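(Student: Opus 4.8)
The plan is to prove Lemma \ref{lem:fundcham} in the order (a), (b), (c), (d), treating (a) and (b) as the standard facts whose proofs we only sketch, and then deriving (c) and (d) from them.

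For (a), I would invoke the standard theory of finite reflection groups: the closed chamber $\CC$ is a strict fundamental domain for the action of $W$ on $V$. Existence of a point in $\CC$ in a given orbit follows from a length/minimality argument — given $v\in V$, pick $w\in W$ with $\mpair{w(v),\rho}$ maximal where $\rho$ is a strictly dominant vector; then $s(w(v))$ lies no higher for each $s\in S$, forcing $\mpair{w(v),\a}\ge 0$ for all $\a\in\Pi$. Uniqueness follows from the fact that if $v,v'\in\CC$ lie in the same orbit then induction on the length of the element carrying $v$ to $v'$, using that any such element of minimal length fixing the relevant faces must be trivial, shows $v=v'$. I would cite \cite{Bour} or \cite{StYale} here rather than reproduce the argument.

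For (b), given $v\in\CC$, the inclusion $W_I\seq\stab_W(v)$ is immediate since each generator $s_\a$ with $\mpair{\a,v}=0$ fixes $v$ by the reflection formula. For the reverse inclusion, I would use the fact that $\stab_W(v)$ is itself generated by the reflections it contains (a theorem of Steinberg), and that a reflection $s_\a$ with $\a\in\Phi_+$ fixing $v\in\CC$ must have $\a$ orthogonal to $v$; an argument on the positive system (or again a minimal-length argument relative to the face of $\CC$ containing $v$) then shows such $\a$ lies in $\real_{\ge0}\Pi_I$, hence $s_\a\in W_I$. Again this is classical and I would cite it.

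The substantive deductions are (c) and (d). For (c): a parabolic subgroup is by definition $w W_I w^{-1}$; by (b), $W_I=\stab_W(v)$ for a suitable $v\in\CC$ (choose $v$ with $\mpair{\a,v}=0$ exactly for $\a\in\Pi$ corresponding to $I$), so $w W_I w^{-1}=\stab_W(w(v))$, exhibiting every parabolic as a point stabiliser. Conversely, by (a) any $v\in V$ is $W$-conjugate to some $u\in\CC$, so $\stab_W(v)$ is conjugate to $\stab_W(u)=W_I$ by (b), hence parabolic. For the pointwise-stabiliser assertion: the pointwise stabiliser of a subset $X\seq V$ is $\bigcap_{v\in X}\stab_W(v)$, so it suffices to show this intersection is parabolic — but that is exactly (d), which I therefore prove next; and conversely every point stabiliser is trivially a pointwise stabiliser (of a singleton), so the two classes coincide.

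For (d), the key reduction is: it suffices to prove $\stab_W(v)\cap\stab_W(v')$ is parabolic for two points, then induct; and by conjugating we may assume one of them, say $v$, lies in $\CC$, so $\stab_W(v)=W_I$. Now I claim $W_I\cap\stab_W(v')=\stab_{W_I}(v')$ is a parabolic subgroup \emph{of} $W_I$ (by applying (b)–(c) inside the Coxeter system $(W_I,I)$, whose reflection representation is the span of $\Pi_I$ with $v'$ projected onto it — one checks the orthogonal complement of $\linspan\Pi_I$ is fixed by $W_I$, so $\stab_{W_I}(v')=\stab_{W_I}(\text{proj of }v')$), and a parabolic subgroup of a standard parabolic $W_I$ is parabolic in $W$. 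The main obstacle is precisely this last point — transitivity of "being parabolic," i.e. that a parabolic subgroup of $W_I$ is again parabolic in $W$; this requires knowing that the reflections of $W_I$ are exactly $T\cap W_I$ and that $W_I$-conjugates of its standard parabolics are $W$-conjugate to standard parabolics of $W$, which follows since $W_I$'s standard parabolics $W_{J}$ with $J\seq I\seq S$ are already standard parabolics of $W$. Finally, the displayed identity $W_I\cap W_J=W_{I\cap J}$ for $I,J\seq S$ follows from the general statement applied to suitable dominant points, or directly: $\supseteq$ is clear, and for $\subseteq$ one chooses $v,v'\in\CC$ with $\stab_W(v)=W_I$, $\stab_W(v')=W_J$ and notes $v+v'\in\CC$ has $\stab_W(v+v')=W_I\cap W_J$ on one hand and $=W_{I\cap J}$ by (b) applied to $v+v'$ (since $\mpair{\a,v+v'}=0$ iff $\a\perp v$ and $\a\perp v'$) on the other.
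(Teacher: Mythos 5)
The paper offers no proof of this lemma at all: it is presented as a collection of standard facts with a pointer to \cite{Bour, StYale}, so your proposal supplies an argument where the paper supplies only a citation. Your argument is correct and is essentially the standard one: the maximality argument for existence in (a); for (b), Steinberg's fixed-point theorem together with the support observation that $\a\in\Phi_+$, $\mpair{\a,v}=0$ and $v\in\CC$ force $\a\in\real_{\geq 0}\Pi_I$ (expand $\a$ in $\Pi$ with non-negative coefficients and note each term of $\mpair{\a,v}$ is non-negative); the conjugation argument for (c); and for (d) the reduction of a two-fold intersection to $\stab_W(v)\cap\stab_W(v')=\stab_{W_I}(v')$, which is parabolic in $W_I$ and hence in $W$ because standard parabolics of $W_I$ are standard parabolics of $W$. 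The $v+v'$ computation giving $W_{I\cap J}\seq W_I\cap W_J\seq \stab_W(v+v')=W_{I\cap J}$ is clean and correct. Two small points you should make explicit. First, (d) concerns the intersection of an \emph{arbitrary} family of parabolics, and your induction only treats finite families; since $W$ is finite it has finitely many subgroups, so any intersection equals the intersection of a finite subfamily, which closes this (the same remark is needed for the pointwise stabiliser of an infinite subset in (c)). Second, you do not actually need to project $v'$ onto $\linspan \Pi_I$: with the paper's definition of a root system in \S\ref{rootsyst}, which does not require $\Phi$ to span $V$, the root system $\Phi_{W_I}$ with simple system $\Pi_I$ is a root system in $V$, and (a)--(c) apply to $W_I$ acting on all of $V$ directly.
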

 
 It follows that if $\a\in \Phi$ is any root, then  the $W$-orbit $W\a$ contains 
 a unique element of $\CC_{W}$. A root $\a$ in $ \Phi\cap \CC_{W}$ is said to be 
 a  \emph{dominant root}. If $\Phi$ is irreducible, there are at most two such roots
 (cf. \cite[Lemma 2]{DyLeRef}).  We note that if $V$ were a complex vector space, 
 (c) above would not be true.
 
 \subsection{Determination of $W$-orbit representatives}\label{orbrep}
 An efficient algorithm for computing the unique element in $Wv\cap \CC$ for
 an arbitrary element  $v \in V$ is as follows. For $v\in V$, let 
 $\Phi_{v}:=\mset{\b\in \Phi_+\mid \mpair{\b,v}<0}$ and 
 $n_{v}:=\vert \Phi_{v}\vert$.  If $\Phi_{v}\cap \Pi=\eset$, 
 then $\Phi_{v}=\eset$ and  $v\in \CC$. Otherwise, there exists some $\b\in \Phi_{v}\cap \Pi$; one then has 
 $\Phi_{s_{\beta}(v)} =s_{\beta}(\Phi_{v}\setminus\set{\beta})$ 
 and $n_{s_{\b}(v)}=n_{v}-1$. Continuing thus, one obtains $s_{\b_1}\dots s_{\b_r}v\in\CC$,
 where $r=n_v$.
 
  \subsection{Dynkin diagrams of simple systems and np subsets}\label{diag}  Suppose $\Phi$ is crystallographic.   
  Define a \emph{np subset} $\G$ of $\Phi$ to be a subset
such that for distinct elements $\a,\b\in \G$, one has  $\mpair{\a,\ck \b}\leq 0$. 
Then a subset $\G$ of $\Phi$ is a simple subsystem  if and only if it is a 
linearly independent np subset (see \cite{DyLeRef}).

Define the diagram of  a np subset $\G$ of   $\Phi$  to be 
the Dynkin diagram
(as in \cite[\S 4.7]{K}) of the generalised Cartan matrix $(m_{\a,\b})_{\a,\b\in \G}$ where 
$m_{\a,\b}:=\mpair{\ck \a,\b}$. This is   a graph
with vertex set $\G$ such that if $\a,\b\in \G$ are distinct with
$\vert m_{\a,\b}\vert \geq \vert m_{\b,\a}\vert $, the vertices $\a,\b$ are connected by
$\vert m_{\a,\b}\vert $ lines  and these lines are equipped with an arrow pointing towards
$\a$ if $\vert m_{\a,\b}\vert >1$. Note that the arrow points towards the shorter root if $\a,\b$ 
are of different length, and that if $\a=-\b$, then $\a,\b$ are joined by two lines equipped
with a pair of  arrows in opposite directions. 
It is well known that the  connected components of the diagram of 
$\G$ are certain Dynkin 
diagrams of finite or affine type.
  Further,  the np subset $\G$ of $\Phi$  is a simple subsystem of 
  $\Phi$ if and only if 
  the irreducible components of its  diagram are all  Dynkin diagrams 
  of finite type.
  
 By an \emph{ordered simple subsystem} (resp., \emph{ordered np subset}) of $\Phi$, we mean a tuple $\bb=(\b_{1},\ldots, \b_{n})$ of 
pairwise distinct roots whose underlying set $[\bb]:=\set{\b_{1},\ldots, \b_{n}}$ is a simple subsystem (resp., np subset) of $\Phi$.
 
 \section{Orbits under the diagonal action}\label{s1}
 
 \subsection{Fundamental domain for the diagonal $W$-action}  For  \label{funddom}
each  $n\in \Nat$, let $V^{n}:=V\oplus \ldots\oplus  V$ ($n$ factors)
 with the diagonal $W$-action defined by $w(v_{1},\ldots, v_{n}):=(wv_{1},\ldots, wv_{n})$. 
 (For $n=0$,
 $V^{n}:=\set{0}$ with trivial $W$-action, by convention).
 We identify $V^{n}\times V^{m}$ with $V^{n+m}$ in the natural way. 
 For $\bv:=(v_{1},\ldots,v_{n})\in V^{n}$ and any $m\leq n$ in $\Nat$, 
 define the truncation $\tau_{m}(\bv):=(v_{1},\ldots, v_{m})$ and 
 \begin{equation}W_{\bv,m}:=\stab_{W}(\tau_{m}(\bv))=
 \stab_{W}(v_{1})\cap \ldots \cap \stab_{W}(v_{m}).\end{equation} 
 Thus, $W_{\bv,0}=W$ and each $W_{\bv,m}$ is a parabolic subgroup of $W_{\bv,m-1}$. 
 In particular, $W_{\bv,m}$ is a reflection 
 group acting on $V$, and since its root system has a natural positive 
subsystem given by its intersection with $\Phi_+$, it has a well defined fundamental chamber 
 $\CC_{W_{\bv,m}}\subseteq V$. 
 Note that for each $m$, $W_{\bv,m}\subseteq W_{\bv,m-1}$, whence
 $\CC_{W_{\bv,m}}\supseteq \CC_{W_{\bv,m-1}}$.
 
 Let 
 \begin{equation}\label{eq:deffun}
 \CC^{(n)}_{W}:=\mset{\bv=(v_{1},\ldots, v_{n})\in V^{n}\mid 
\text{\rm for  $m =1,\ldots, n$, } v_{m}\in \CC_{W_{\bv, m-1}}}.
 \end{equation}
 
Let $\bv=(v_{1},\ldots, v_{n})\in V^{n}$ and   $1\leq m\leq n-1$.
 Set $W':=W_{\bv,m}$, $\bv':=(v_{1},\ldots,v_{m})$  
and $\bv'':=(v_{m+1},\ldots,v_{n})$. The definitions immediately imply 
that  
\begin{equation}\label{eq:fundchrec}
\bv\in \CC_{W}^{(n)} \text{ \rm  if and only if $\bv'\in \CC_{W}^{(m)}$ 
and $\bv''\in \CC_{W'}^{(n-m)}$}.
\end{equation}

 The next result will be crucial for the investigation of the action of $W$ on subsets of $V$.
 It identifies a fundamental region for the action of $W$ on $V^n$.
 
 \begin{theorem}\label{thm:fundreg}
 \begin{num}
 \item
 If $\bv=(v_1,\dots v_n)\in \CC^{(n)}_{W}$, 
 then $\Stab_{W}(\bv)=W_{\bv,n}$ is the standard parabolic
 subgroup $W_{I_{\bv}}$ of $W$, where 
 $$
 I_{\bv}=\mset{s_\a\mid \a\in\Pi,\mpair {\a,v_i}=0\text{ for }
 i=1,\dots,n}.
 $$
 \item Every $W$-orbit on $V^{n}$ contains a unique point of $\CC^{(n)}_{W}$. 
 \item $\CC^{(n)}_{W}$ is a convex (and hence connected) subset of $V^n$.
 \end{num}
 \begin{proof}  If $n=0$, (a) is trivial. Let $n>0$. 
 Assume by way of induction that   $W_{\bv,n-1}$ is the standard 
 parabolic subgroup $W_{I_{\tau_{n-1}(\bv)}}$ of $W$. Clearly 
 $\Stab_{W}(\bv)\supseteq W_{I_{\bv}}$, and so it suffices to show that
 any element of $\Stab_{W}(\bv)$ lies in $W_{I_{\bv}}$. Let 
 $w\in \Stab_{W}(\bv)$; then evidently 
 $w\in \Stab_{W}(\tau_{n-1}\bv)=
 W_{I_{\tau_{n-1}\bv}}$. 
 Moreover $v_n\in\CC_{W_{I_{\tau_{n-1}\bv}}}$
 implies that $\Stab_{W_{I_{\tau_{n-1}(\bv)}}}(v_n)$ is the standard parabolic
 subgroup of $W_{I_{\tau_{n-1}(\bv)}}$ generated by 
 $\mset{s_\a\in \Pi_{I_{\tau_{n-1}(\bv)}}\mid\mpair{\a,v_n}=0}$, 
 which is precisely the set $I_{\bv}$. This proves (a).
 
 To prove (b), assume by induction that every $W$-orbit on $V^{n-1}$ contains a 
 unique point of $\CC^{(n-1)}_{W}$.
 By induction, there is an element $w'\in W$ such that $w'(\tau_{n-1}(\bv))\in \CC^{(n-1)}_{W}$. Let
 $\bv':=w'(\bv)=(v_{1}',\ldots ,v_{n}')$. Then $\tau_{n-1}(\bv')\in \CC^{(n-1)}_{W}$. 
 Let $W_{n-1}':=\Stab_{W}(\tau_{n-1}\bv')$; by (a), this is the standard parabolic subgroup
 $W_{I'}$ of $W$, where $I'=\mset{s_\a\mid\a\in \Pi, \mpair{\a,v_i'}=0\text{ for }i=1,\dots,n-1}$.
 Now there is an element $w''\in W_{n-1}'$  such that
 $w''(\bv'_{n})\in \CC_{W'_{n-1}}$. Let $w:=w''w'\in W$.
 Then since $w''$ stabilises $(v_1',\dots,v_{n-1}')$, it is clear that
 $\bv''=w(\bv)=w''(\bv')\in \CC_{W}^{(n)}$. This shows that every element of $V^n$
 may be moved into $\CC^{(n)}_W$ by $W$. We show that the intersection of a $W$-orbit on $V^{n}$ with
 $\CC^{(n)}_W$ is a single point, that is, that no two distinct points of $\CC^{(n)}_W$
 are in the same $W$-orbit.
 
 This will be done by induction on $n$. The case $n=1$ is clear.
 Suppose that $\bv\in\CC^{(n)}_W$ and that $w\in W$ is such that $w(\bv)\in \CC^{(n)}_{W}$.
 Then  $w(\t_{n-1}(\bv))\in \CC^{(n-1)}_{W}$. By induction,
$w\in \Stab_W(\t_{n-1}\bv)$, which by (a) is equal to $W_I$, where $I=\mset{s_\a\mid\a\in\Pi,
\mpair{\a,v_i}=0\text{ for }i=1,\dots,n-1}$.
Since $w(v_{n})\in\CC_{W_I}$ and $v_{n}\in\CC_{W_I}$, it follows that $wv_n=v_n$ and (b) follows.

 To prove (c), let $\bu=(u_1,\dots,u_n)$ and $\bv=(v_1,\dots,v_n)$ be points in $\CC_W^{(n)}$,
 and for $t\in\bbR$, $0\leq t\leq 1$, write $\bx(t)=t\bu+(1-t)\bv$. We wish to show that
 $\bx(t)\in\CC_W^{(n)}$. By induction, we may assume that $tu_m+(1-t)v_m\in \CC_{W_{\bx_t,m-1}}$
 for $m<n$, and we require the corresponding statement for $m=n$. 
 We shall prove first that for $t\neq 0,1$, we have, for all 
 non-negative integers $m$,
 \begin{equation}
 \label{eq:interval}
 W_{\bx(t),m}=W_{\bu,m}\cap W_{\bv,m}.
 \end{equation}
 The inclusion of the right hand side in the left is clear. We prove the reverse inclusion.
For $m=0$, this is trivial.
% For $m=1$ note that if $\a\in\Pi$ and $\a\not\in \Pi_{I_1}\cap \Pi_{I_2}$, 
% where $I_1=\mset{\a\mid\a\in\Pi, \mpair{\a,u_1}=0}$ and 
% $I_2=\mset{\a\mid\a\in\Pi, \mpair{\a,u_1}=0}$, then $\mpair{tu_1+(1-t)v_1,\a}>0$,
% and hence $s_\a\not\in W_{\bx(t)}$. Thus $W_{\bx(t),1}\subseteq W_{\bu,1}\cap W_{\bv,1}$,
% and the reverse inclusion is trivial.
  In general, suppose by way of induction (using (a))
 that $W_{\bx(t),m-1}=W_I$ with $m\geq 1$, where $I=I_1\cap I_2$, and 
 $I_1=\mset{s_\a\mid \a\in\Pi,\mpair{\a,u_i}=0\text{ for }i=1,\dots, m-1}$ and 
 $I_2=\mset{s_\a\mid \a\in\Pi,\mpair{\a,v_i}=0\text{ for }i=1,\dots, m-1}$.
 Then $u_m\in\CC_{W_{I_1}}$, whence $\mpair{u_m,\a}\geq 0$ for $\a\in\Pi_{I_1}$,
 and $v_m\in\CC_{W_{I_2}}$. Now if $\a\in\Pi_I$, then $\mpair{\a,u_m}\geq 0$
 and $\mpair{\a,v_m}\geq 0$. If $\mpair{\a,u_m}\neq 0$ or $\mpair{\a,v_m}\neq 0$,
 then $\mpair{\a,tu_m+(1-t)v_m}>0$, and $s_\a\not\in W_{\bx(t),m}$. This proves the assertion
 \eqref{eq:interval}.
 
 It follows that for $t\neq 0,1$, $\CC_{W_{\bx(t),n-1}}=\mset{v\in V\mid \mpair{v,\alpha}\geq 0 \text{ \rm for all } \a\in I}$, where
 $I=I_1\cap I_2$ is as in the previous paragraph with $m=n$. But $u_n\in\CC_{W_{I_1}}$ and 
 $v_n\in\CC_{W_{I_2}}$, whence for $\a\in\Pi_{I_1}\cap\Pi_{I_2}$, we have 
 $\mpair{\alpha,tu_n+(1-t)v_n}\geq 0$, and (c) is proved.
 \end{proof}
  \end{theorem}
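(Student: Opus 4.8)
The plan is to prove the three assertions by a simultaneous induction on $n$, exactly as the recursive structure of the definition \eqref{eq:deffun} and the relation \eqref{eq:fundchrec} suggest. The base case $n\leq 1$ is classical: it is precisely Lemma \ref{lem:fundcham}. So assume $n\geq 1$ and that (a), (b), (c) hold for all smaller values.

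For part (a), I would argue as follows. Fix $\bv=(v_1,\dots,v_n)\in\CC^{(n)}_W$. The containment $\Stab_W(\bv)\supseteq W_{I_\bv}$ is immediate since each $s_\a$ with $\a\in\Pi$ and $\mpair{\a,v_i}=0$ for all $i$ fixes every $v_i$. For the reverse, take $w\in\Stab_W(\bv)$. Then $w$ fixes $\tau_{n-1}(\bv)$, so by the inductive form of (a), $w\in W_{\bv,n-1}=W_{I_{\tau_{n-1}(\bv)}}$, a standard parabolic. Since $\bv\in\CC^{(n)}_W$, the definition gives $v_n\in\CC_{W_{\bv,n-1}}$, i.e. $v_n$ lies in the fundamental chamber of the reflection group $W_{\bv,n-1}$. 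Now apply Lemma \ref{lem:fundcham}(b) \emph{inside} $W_{\bv,n-1}$: the stabiliser of $v_n$ there is the standard parabolic of $W_{\bv,n-1}$ on the simple roots $\a\in\Pi_{I_{\tau_{n-1}(\bv)}}$ with $\mpair{\a,v_n}=0$, and this set of simple roots is exactly $I_\bv$. Hence $w\in W_{I_\bv}$, proving (a).

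For part (b), surjectivity of the orbit map onto $\CC^{(n)}_W$ is a two-step move: by induction there is $w'\in W$ with $w'(\tau_{n-1}(\bv))\in\CC^{(n-1)}_W$; writing $W'_{n-1}:=\Stab_W(\tau_{n-1}(w'\bv))$ (a standard parabolic by (a)), there is $w''\in W'_{n-1}$ moving the last coordinate of $w'\bv$ into $\CC_{W'_{n-1}}$, and $w''$ fixes the first $n-1$ coordinates, so $w''w'(\bv)\in\CC^{(n)}_W$ by \eqref{eq:fundchrec}. Uniqueness: if $\bv,w(\bv)\in\CC^{(n)}_W$, then $w(\tau_{n-1}\bv)\in\CC^{(n-1)}_W$, so by inductive uniqueness $w\in\Stab_W(\tau_{n-1}\bv)=W_I$ by (a); since both $v_n$ and $w(v_n)$ lie in $\CC_{W_I}$ and they are in the same $W_I$-orbit, Lemma \ref{lem:fundcham}(a) forces $w(v_n)=v_n$, so $w(\bv)=\bv$.

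For part (c), I would take $\bu,\bv\in\CC^{(n)}_W$ and $\bx(t)=t\bu+(1-t)\bv$ for $t\in(0,1)$ (the endpoints being trivial), and establish the key auxiliary fact \eqref{eq:interval}, namely $W_{\bx(t),m}=W_{\bu,m}\cap W_{\bv,m}$ for all $m$, again by induction on $m$. One inclusion is obvious. For the other, write the right-hand side as a standard parabolic $W_{I_1\cap I_2}$ using (a); given $\a\in\Pi_{I_1\cap I_2}$, one has $\mpair{\a,u_m}\geq 0$ and $\mpair{\a,v_m}\geq 0$ (because $u_m\in\CC_{W_{I_1}}$, $v_m\in\CC_{W_{I_2}}$), and strict positivity of the sum unless both vanish; so $s_\a$ fixes $\bx(t)$ through index $m$ only if it already lies in $W_{\bu,m}\cap W_{\bv,m}$. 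Granting \eqref{eq:interval} with $m=n-1$, the chamber $\CC_{W_{\bx(t),n-1}}$ is cut out by the inequalities $\mpair{v,\a}\geq 0$ for $\a\in\Pi_{I_1\cap I_2}$, and the same convexity-of-the-inequality argument in the $n$-th coordinate gives $\mpair{\a,tu_n+(1-t)v_n}\geq 0$, so $\bx(t)\in\CC^{(n)}_W$. \textbf{The main obstacle} is organising the nested inductions cleanly — one must be careful that at each stage the relevant stabiliser really is a \emph{standard} parabolic (so that "fundamental chamber inside a reflection subgroup" makes sense and Lemma \ref{lem:fundcham} applies), which is why (a) has to be carried along as part of the induction rather than proved separately; and in (c) one needs the sharper statement \eqref{eq:interval} about stabilisers along the segment, not merely membership in $\CC^{(n)}_W$, for the induction to close.
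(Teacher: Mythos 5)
Your proposal is correct and follows essentially the same route as the paper's own proof: the same induction on $n$ carrying (a) along to guarantee that the relevant stabilisers are standard parabolics, the same two-step existence and truncation-based uniqueness argument for (b), and the same auxiliary identity $W_{\bx(t),m}=W_{\bu,m}\cap W_{\bv,m}$ for (c). No substantive differences to report.
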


 \subsection{Total orderings on $V$}\label{lexorder} 
 A {\em vector space  total order} $\leq$ of $V$   is a total order of $V$ such that
 the set $\mset{v\in V\mid v>0}$ of positive elements is closed under addition and 
 under multiplication by positive real scalars. One way such orderings arise is as follows.
 Take any (ordered) basis $\set{\a_{1},\ldots, \a_{p}}$ of $V$, and define the total order $\leq$  
 by declaring that $u<v$ if $v-u=\sum_{i=1}^{p}c_{i}\a_{i}$ and there is some $j$ with 
 $c_{i}=0$ for $i<j$ and $c_{j}>0$.  It is well known that in fact, all total orders arise in this way (since $V$ is finite dimensional).
 
    When $V$ contains a root system $\Phi$, 
    fix a vector space total order $\leq$ of $V$ such that 
   every positive root of $W$ is positive in that order i.e.
 $\Phi_{+}\seq \mset{v\in V\mid v>0}$. One such order is the 
 one described above, taking the simple system  $\Pi=\set{\a_{1},\ldots, \a_{l}}$
to be the initial segment of a basis of $V$ as above.
 
 Given such an ordering of $V$, we endow $V^{n}$ with the 
 lexicographic total order $\preceq$ induced by $\leq$ i.e.
 $(v_{1},\ldots ,v_{n})\prec (u_{1},\ldots, u_{n})$  if there is some $j$
 such that $v_{i}=u_{i}$ for $i<j$ and $v_{j}< u_{j}$.
 
 \begin{prop}\label{prop:lexorder} Let $\bv\in  \CC^{(n)}_{W}$. 
 Then $v$ is the maximum element of the orbit $W\bv$ in the total order induced by $\preceq$.
  \end{prop}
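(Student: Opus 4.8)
The plan is to argue by induction on $n$, exploiting the recursive structure of $\CC^{(n)}_W$ given by \eqref{eq:fundchrec} and the fact that the total order $\preceq$ on $V^n$ is lexicographic, so that comparisons are resolved coordinate by coordinate. The base case $n=1$ is the statement that $\CC_W=\CC$ is the $\leq$-maximal point of each $W$-orbit in $V$: indeed if $v\in\CC$ and $w\in W$ with $wv\neq v$, then $v-wv=\sum_{\b\in\Phi_+,\, s_\b \text{ ``inverted''}} c_\b\,\b$ is a nonnegative (not all zero) combination of positive roots — concretely, writing $w\inv = s_{\b_1}\cdots s_{\b_r}$ reduced and telescoping $v - wv = \sum_{k}\bigl(s_{\b_1}\cdots s_{\b_{k-1}}v - s_{\b_1}\cdots s_{\b_k}v\bigr)$, each summand is $\mpair{s_{\b_1}\cdots s_{\b_{k-1}}v,\ck\b_k}\,s_{\b_1}\cdots s_{\b_{k-1}}\b_k$, a nonnegative multiple of a positive root since $v\in\CC$ — hence $v-wv>0$ in the vector space order, i.e. $wv<v$.

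For the inductive step, let $\bv=(v_1,\dots,v_n)\in\CC^{(n)}_W$ and $w\in W$ with $w\bv\neq\bv$; I want to show $w\bv\prec\bv$. Write $\bv'=\tau_{n-1}(\bv)$, which lies in $\CC^{(n-1)}_W$ by \eqref{eq:fundchrec}. If $w\bv'\neq\bv'$, then by the induction hypothesis $w\bv'\prec\bv'$ in the lexicographic order on $V^{n-1}$, and since the first $n-1$ coordinates already decide the comparison, $w\bv\prec\bv$ in $V^n$. Otherwise $w$ fixes $\bv'$, so by Theorem \ref{thm:fundreg}(a) we have $w\in\Stab_W(\bv')=W_{\bv,n-1}=:W'$, a reflection subgroup whose positive system is $\Phi_{W',+}=\Phi_+\cap\Phi_{W'}$. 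Now $\bv\in\CC^{(n)}_W$ forces $v_n\in\CC_{W'}$, i.e. $v_n$ lies in the fundamental chamber for the $W'$-action. Applying the base-case argument to the reflection group $W'$ (with its positive roots still contained in $\{v>0\}$, since $\Phi_{W',+}\subseteq\Phi_+\subseteq\{v>0\}$): since $w\in W'$ and $wv_n\neq v_n$ — note $wv_n=v_n$ together with $w\bv'=\bv'$ would give $w\bv=\bv$, contrary to assumption — we get $wv_n<v_n$ in the vector space order. As the first $n-1$ coordinates of $w\bv$ and $\bv$ agree, this yields $w\bv\prec\bv$.

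The only point requiring a little care — and the main thing to get right — is the base-case lemma in the form needed: that for \emph{any} finite real reflection group $W'$ on $V$ with a chosen positive system $\Phi_{W',+}$ that is $\leq$-positive, and any $v$ in the corresponding fundamental chamber, $v$ is the strict $\leq$-maximum of $W'v$ among points $\neq v$. This is where the telescoping-over-a-reduced-word computation above is used; it needs the standard fact (Lemma \ref{lem:fundcham}-type reasoning, or \ref{orbrep}) that a reduced expression $w\inv=s_{\b_1}\cdots s_{\b_r}$ in the Coxeter system $(W',S')$ has the property that $\b_k$ and the partial products $s_{\b_1}\cdots s_{\b_{k-1}}\b_k$ run through the ``inversion set'' of positive roots, all of which are $\geq 0$ against $v$. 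Once this lemma is in hand the induction is immediate; everything else is just unwinding the lexicographic order and invoking Theorem \ref{thm:fundreg}(a) to identify the stabiliser $W_{\bv,n-1}$ as a bona fide reflection group with the compatible positive system. I expect no serious obstacle beyond stating this base lemma cleanly; it is essentially the $n=1$ case together with the observation that it applies verbatim to every $W_{\bv,m-1}$.
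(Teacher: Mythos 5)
Your proposal is correct and is essentially the paper's own argument: the paper locates the maximal $j$ with $w\in W_{\bv,j}$, notes $v_i=u_i$ for $i\le j$, and then uses the fact that $v_{j+1}-w(v_{j+1})$ is a nonzero element of $\real_{\geq 0}\Pi_{W_{\bv,j}}\subseteq\real_{\geq 0}\Phi_+$, which is exactly your base-case lemma applied to the reflection group $W_{\bv,j}$; your induction on $n$ unwinds to the same case analysis. The only cosmetic differences are that you prove the key positivity fact by telescoping over a reduced word (the paper simply invokes it as standard), with a harmless $w\leftrightarrow w^{-1}$ swap in that computation.
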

\begin{proof} Let $w\in W$ and $\bu:=w\bv\in V^n$. If $w\in W_{\bv,n}$, then $\bu=\bv$.
Otherwise, there is some $j$ with $0\leq j\leq n-1$ such that
$w\in W_{\bv,j}\sm W_{\bv,j+1}$. Write $\bv=(v_{1},\ldots,v_{n})$ and 
$\bu=(u_{1},\ldots, u_{n})$. Then $v_{i}=u_{i}$ for $i\leq j$, 
$v_{j+1}\in \CC_{W_{\bv,j}}$,
and $u_{j+1}=w(v_{j+1})$. Since  $w\in W_{\bv,j}\sm W_{\bv,{j+1}}=
W_{\bv,{j}}\sm \stab_{W}(v_{j+1})$,  it follows that 
$0\neq v_{j+1}- u_{j+1}=v_{j+1}-w(v_{j+1})\in 
\real_{\geq 0}\Pi_{W_{\bv,j}}$. 
Moreover $\Pi_{W_{\bv,j}}\seq \Phi_{+}= \mset{\bv\in \Phi\mid \bv>0}$, 
and so $v_{j+1}>u_{j+1}$. Since $v_{i}=u_{i}$ for $i<j+1$, 
one has $\bu\preceq \bv$ as required. 
\end{proof} 

\begin{corollary}\label{cor:fund}
Given any total order on $V$ such that $0\leq\Phi_+$, let $\preceq$ be the
induced lexicographic  total order on $V^n$. Then the region $\CC^{(n)}_W$ is precisely
the set of maximal elements in the $W$-orbits on $V^n$.
\end{corollary}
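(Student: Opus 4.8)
The plan is to deduce this immediately from Theorem \ref{thm:fundreg}(b) together with Proposition \ref{prop:lexorder}. The one preliminary observation to make is that each $W$-orbit on $V^n$ is a finite set and $\preceq$ is a total order on $V^n$; consequently every orbit $W\bv$ possesses a unique maximum element with respect to $\preceq$, and in a finite totally ordered set ``maximal'' and ``maximum'' coincide. Thus the set of maximal elements of the $W$-orbits on $V^n$ is precisely $\mset{\max_{\preceq}(W\bv)\mid \bv\in V^n}$, and it suffices to identify this set with $\CC^{(n)}_W$.

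First I would check the inclusion $\CC^{(n)}_W\subseteq \mset{\max_{\preceq}(W\bv)\mid \bv\in V^n}$: if $\bv\in \CC^{(n)}_W$, then Proposition \ref{prop:lexorder} asserts exactly that $\bv=\max_{\preceq}(W\bv)$, so $\bv$ is the maximal element of its own orbit.

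For the reverse inclusion, suppose $\bu\in V^n$ is a maximal element of its orbit, i.e. $\bu=\max_{\preceq}(W\bu)$. By Theorem \ref{thm:fundreg}(b) the orbit $W\bu$ contains a (unique) point $\bv\in\CC^{(n)}_W$; since $W\bv=W\bu$, Proposition \ref{prop:lexorder} gives $\bv=\max_{\preceq}(W\bv)=\max_{\preceq}(W\bu)=\bu$. Hence $\bu=\bv\in\CC^{(n)}_W$, and the two sets coincide.

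I do not expect any real obstacle: the corollary is a formal consequence of the two preceding results. The only point needing a moment's care is the passage from ``maximal'' to ``maximum'', which is justified by finiteness of orbits and totality of $\preceq$ and is what makes Proposition \ref{prop:lexorder} (whose conclusion is phrased in terms of a maximum) directly applicable. One could alternatively avoid citing Theorem \ref{thm:fundreg}(b) by using the inductive ``straightening'' procedure from its proof (or the algorithm of \S\ref{orbrep}) to move an arbitrary $\bu$ into $\CC^{(n)}_W$, but invoking (b) is the cleanest route.
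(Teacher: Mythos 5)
Your proof is correct and follows exactly the paper's route: the paper simply states that the corollary is immediate from Theorem \ref{thm:fundreg} and Proposition \ref{prop:lexorder}, and your two inclusions spell out precisely that deduction. The remark about maximal versus maximum elements in a finite totally ordered set is a reasonable point of care, though the paper leaves it implicit.
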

\begin{proof}
This is immediate from Theorem \ref{thm:fundreg} and Proposition \ref{prop:lexorder}.
\end{proof}

\subsection{Ordered and unordered sets}\label{ss:ordunord} For a group $H$ acting on the left on a set $U$, we denote the set of $H$-orbits on $U$ by $U/H$. 
We use frequently below the simple fact that if  $H=H_{1}
\times H_{2}$ is a product of two groups, then $H_{1}$ (resp., $H_{2}$) acts 
naturally on $U/H_{2}$ (resp., $U/H_{1})$) and there  are canonical bijections 
\begin{equation}\label{eq:prodorb}
U/H\cong (U/H_{1})/H_{2}\cong (U/H_{2})/H_{1}.
\end{equation}
We record the following elementary observation.

\begin{lemma}\label{lem:tot}
Let $U$ be a totally ordered set, and let $H$ be a finite group acting on $U$.
For any finite subset $A$ of $U$, denote by $\max(A)$ the maximum element of $A$.
Then
\begin{num}
\item The map $Hu\mapsto \max(Hu)$ ($u\in U$) defines a bijection from
the set $U/H$  to a subset of $U$, which we denote by $M_H=M_H(U)$.
\item If $H=H_1\times H_2$ is a direct product of groups $H_1,H_2$, then  for $u\in U$, $\max(Hu)=\max_{h\in H_1}(\max(hH_2u))$.
\item 
%For $u\in U$, write $[u]=H_2u\in U/H_2$. 
The set  $\mset{H_{2}u\mid u\in M_{H_1\times H_2}}$ is a set of distinct representatives
for the $H_1$-orbits on $U/H_2$.
\end{num}
\end{lemma}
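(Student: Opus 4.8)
The three parts are increasingly concrete manifestations of the same idea — that taking the maximum of an orbit gives a canonical transversal — so the plan is to prove them in order, each building on the last. For part (a), the map $Hu\mapsto\max(Hu)$ is well-defined because $Hu$ is a finite subset of a totally ordered set (finiteness of $H$ guarantees this), so it has a unique maximum; this maximum depends only on the orbit $Hu$, not on the chosen representative $u$. Injectivity is immediate: if $\max(Hu)=\max(Hu')$ then this common element lies in both $Hu$ and $Hu'$, forcing $Hu=Hu'$. Hence the map is a bijection onto its image $M_H\subseteq U$. This part is essentially formal and I expect no obstacle.

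For part (b), I would simply unwind the definitions: the orbit $Hu$ is the union $\bigcup_{h_1\in H_1} h_1(H_2 u)$ of the finitely many subsets $h_1(H_2u)$, and the maximum of a finite union of finite sets is the maximum over the pieces of the maximum of each piece. So $\max(Hu)=\max_{h_1\in H_1}\bigl(\max(h_1 H_2 u)\bigr)$, which is the claimed identity. Again this is a routine finite-set manipulation; the only point to be careful about is that $H_2$ is normal in $H=H_1\times H_2$ so that $h_1 H_2 u$ is genuinely a coset-orbit, but in a direct product this is automatic.

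Part (c) is the substantive statement. Using the canonical bijection \eqref{eq:prodorb}, $U/H\cong(U/H_2)/H_1$, an element of $M_{H_1\times H_2}$ picks out one point in each $H$-orbit, and I want to show the associated $H_2$-orbits $\{H_2 u\mid u\in M_{H_1\times H_2}\}$ form a transversal for the $H_1$-action on $U/H_2$. The key steps: first, by part (a) applied to $H$, distinct elements $u,u'\in M_{H_1\times H_2}$ lie in distinct $H$-orbits, hence the $H_2$-orbits $H_2u$ and $H_2u'$ lie in distinct $H_1$-orbits on $U/H_2$ (two elements of $U/H_2$ are in the same $H_1$-orbit iff the original points are in the same $H$-orbit) — this gives distinctness. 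Second, surjectivity: given any $H_2$-orbit $H_2 v$, its $H_1$-orbit in $U/H_2$ corresponds to the $H$-orbit $Hv$, which contains a (unique) element $u\in M_{H_1\times H_2}$; then $H_2 u$ lies in the $H_1$-orbit of $H_2 v$. So every $H_1$-orbit on $U/H_2$ is hit exactly once.

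The mild subtlety — and the only place I would slow down — is in part (c): I must make sure the correspondence "$H_1$-orbits on $U/H_2$" $\leftrightarrow$ "$H$-orbits on $U$" is applied in the right direction, i.e. that $M_{H_1\times H_2}$ genuinely selects one $u$ per $H$-orbit (which is part (a) for $H=H_1\times H_2$) and that passing to $H_2 u$ does not collapse or split anything. Once \eqref{eq:prodorb} is invoked cleanly this is transparent, but stating it carefully is where an otherwise trivial argument could go astray. Everything else is bookkeeping with finite sets and total orders.
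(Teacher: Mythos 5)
Your proof is correct. The paper states this lemma as an ``elementary observation'' and supplies no proof at all, so there is nothing to compare against; your argument --- uniqueness of the maximum of a finite orbit for (a), the decomposition $Hu=\bigcup_{h_1\in H_1}h_1(H_2u)$ for (b), and the correspondence between $H$-orbits on $U$ and $H_1$-orbits on $U/H_2$ for (c) --- is the evident one the authors intend. The only point worth sharpening: in (b) all you need is that $H=H_1H_2$ as a set (normality is beside the point), while in (c) the identity $h_1H_2=H_2h_1$ inside the direct product is precisely what makes $H_2v\mapsto H_2(h_1v)$ a well-defined $H_1$-action on $U/H_2$ and what lets you rewrite $H_2u=h_1(H_2v)$ in the surjectivity step; making that one line explicit would close the only place where the bookkeeping could slip.
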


\subsection{} Let $n\in \Nat$.  We regard the symmetric group  $\Sym_{n}$ as the group of all permutations of  $\set{1,\ldots, n}$ (acting on the left) and often 
write its elements in cycle notation. Take $U=V^n$, and let $G$ be a subgroup of 
the symmetric group $\Sym_{n}$. Then $G$ has a natural left action on $V^{n}$ by place permutations, defined by 
$\s(v_{1},\ldots, v_{n})=(v_{\s\inv(1)},\ldots, v_{\s\inv(n)})$, which commutes with the 
diagonal $W$-action and induces  a $W\times G$-action on $V^{n}$. Assume chosen, a total order
$\leq$ on $V$, with corresponding total order $\preceq$ on $U=V^n$, as in Corollary \ref{cor:fund}.
Write $H=W\times G$, and recall that for any $H$-action on the ordered space $U$, $M_H(U)$ is
the set of elements of $U$ which are maximal in their $H$-orbit.

\begin{corollary}\label{cor:order}
\begin{num}\item For $\bv\in V^{n}$, $\max(H\bv)=\max\mset{\max( W\s(\bv))\mid \s\in G}$. 
\item  $\mset{G\bv\mid \bv\in M_{W\times G}(V^{n})}$ is a set of orbit representatives 
for $W$ acting  on $V^{n}/{G}$. 
\item We have $M_W(V^n)=\CC^{(n)}_W\supseteq M_{W\times G}(V^{n})$.
 \end{num}\end{corollary}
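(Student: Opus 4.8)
The plan is to deduce all three parts of Corollary \ref{cor:order} directly from the general abstract machinery of Lemma \ref{lem:tot} together with the already-established identification of $M_W(V^n)$ in Corollary \ref{cor:fund}. The only non-formal input needed is that the $G$-action by place permutations commutes with the diagonal $W$-action, so that $H = W\times G$ genuinely acts on $U = V^n$, and that $\preceq$ is a total order on $U$; both are in place by the preceding subsection.

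First I would prove (a). Apply Lemma \ref{lem:tot}(b) with $H_1 = G$ and $H_2 = W$ (noting the product is internal to $H$ and the order of factors is immaterial): for $\bv\in V^n$,
\[
\max(H\bv) = \max_{\s\in G}\bigl(\max(\s W\bv)\bigr) = \max_{\s\in G}\bigl(\max(W\s\bv)\bigr),
\]
where the last equality uses that $\s$ commutes with the $W$-action so $\s W\bv = W\s\bv$. This is exactly the claimed formula, and it says concretely that to maximise over the big orbit one first runs the $W$-reduction of Theorem \ref{thm:fundreg}/Corollary \ref{cor:fund} on each of the (finitely many) place-permutations of $\bv$, then takes the largest result.

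Next, (b) is an instance of Lemma \ref{lem:tot}(c) with $H_1 = W$ and $H_2 = G$: that lemma asserts that $\mset{G\bv \mid \bv\in M_{W\times G}(V^n)}$ is a set of distinct representatives for the $W$-orbits on $V^n/G$. Here I would simply invoke Lemma \ref{lem:tot}(c), checking that $W$ and $G$ play the roles of $H_1$ and $H_2$ respectively and that the $W$-action on $V^n/G$ referred to in \eqref{eq:prodorb} is the relevant one — which it is, since $W$ commutes with $G$ and hence descends to the quotient. Finally, (c): the equality $M_W(V^n) = \CC^{(n)}_W$ is precisely Corollary \ref{cor:fund}. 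For the inclusion $M_{W\times G}(V^n)\subseteq M_W(V^n)$, observe that if $\bv$ is maximal in its $H$-orbit $H\bv$, then since $W\bv\subseteq H\bv$, $\bv$ is a fortiori maximal in the smaller set $W\bv$, i.e. $\bv\in M_W(V^n)$; hence $M_{W\times G}(V^n)\seq M_W(V^n) = \CC^{(n)}_W$.

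I do not anticipate a genuine obstacle here: the corollary is a bookkeeping consequence of the lemma and of Corollary \ref{cor:fund}, and the one point requiring a word of care is the matching of the factors $H_1,H_2$ with $W,G$ in the two different parts (for (a) one wants the inner maximisation over $W$ and the outer over $G$, so $H_2 = W$; for (b) one wants $W$-orbits on $U/G$, so $H_1 = W$, $H_2 = G$) — this is the kind of thing that is trivial once spelled out but easy to get backwards. The other mild point is to be explicit that place-permutation by $\s$ sends $W\bv$ onto $W\s\bv$ setwise, which is just commutativity of the two actions.
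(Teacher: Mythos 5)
Your proof is correct and follows essentially the same route as the paper, which simply declares (a) and (b) immediate from Lemma \ref{lem:tot} and deduces (c) from Corollary \ref{cor:fund}; you have merely filled in the bookkeeping (the choice of $H_1,H_2$ in each part and the commutativity giving $\s W\bv = W\s\bv$). The only cosmetic difference is that for the inclusion in (c) you argue directly from $W\bv\subseteq H\bv$ rather than via parts (a) and (b), which is a perfectly valid and arguably cleaner variant of the same observation.
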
 
\begin{proof} Parts (a) and (b) are immediate from Lemma \ref{lem:tot}.
Part (c) follows from parts (a) and (b) and Corollary \ref{cor:fund}.
\end{proof}

\begin{prop}\label{prop:genclass} 
Let $\wh \CS$ be a subset of $V^{n}$ 
which is stable under the action of the group $H=W\times G$ as above. 
\begin{num}\item The set $\wh \CR:=\wh \CS\cap \CC_{W}^{(n)}$
is in canonical bijection with $\wh \CS/W$.
\item The set of  $W$-orbits on $\wh \CS/G$  is in canonical bijection with the set  $\wh\CS/H$ of $H$-orbits  on $\wh \CS$ and also with the set of  $G$-orbits on $\wh \CS/W$.
\item Define a left ``dot''  $G$-action $(g,\bb)\mapsto g\cdot \bb$  on $\wh\CR$ by transferring the $G$-action on $\wh \CS/W$ via the bijection of {\rm (a)}. This action is determined by either condition $\set{g\cdot \bb}=Wg\bb\cap \wh \CR$ or   $g\cdot \bb=\max(Wg\bb)$   for $\bb\in \wh \CR$ and $g\in G$.
\item  The $G$-orbits in the dot action on $\wh \CR$ are the equivalence classes for the equivalence relation $\simeq$ on $\wh\CR$ defined  by stipulating
(for $\bb,\bb'\in\wh\CR$) that $\bb\simeq\bb'\iff \bb'\in H\bb$. Hence the $W$-orbits on $\wh \CS/G$  are in canonical
bijection with $\wh\CR/{\simeq}$.
\item If $\eta$ is the natural map
$\wh\CS\lr \wh \CS/G$, then the number of $W$-orbits on $\wh\CS/G$ is at most $\vert\eta({\wh \CR\,\,})\vert$.
\end{num}
\end{prop}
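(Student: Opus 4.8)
The plan is to chain together parts (a)–(d) with Corollary \ref{cor:order}. First I would recall the setup: by part (a), $\wh\CR=\wh\CS\cap\CC^{(n)}_W$ is in canonical bijection with $\wh\CS/W$, and by part (d), the $W$-orbits on $\wh\CS/G$ are in canonical bijection with $\wh\CR/{\simeq}$, where $\bb\simeq\bb'$ iff $\bb'\in H\bb$. So the number of $W$-orbits on $\wh\CS/G$ equals $|\wh\CR/{\simeq}|$, the number of equivalence classes of $\simeq$ on $\wh\CR$. It therefore suffices to produce a surjection from $\eta(\wh\CR)$ onto $\wh\CR/{\simeq}$, where $\eta\colon\wh\CS\to\wh\CS/G$ is the natural quotient map.

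The key step is to identify the right surjection. Since $\wh\CR\subseteq\wh\CS$, restricting $\eta$ gives a map $\eta|_{\wh\CR}\colon\wh\CR\to\eta(\wh\CR)\subseteq\wh\CS/G$, and I claim this factors through $\wh\CR/{\simeq}$, i.e.\ if $\bb\simeq\bb'$ then $\eta(\bb)=\eta(\bb')$, equivalently $G\bb=G\bb'$. Indeed, $\bb\simeq\bb'$ means $\bb'\in H\bb=W\times G\cdot\bb$, so $\bb'=wg\bb$ for some $w\in W$, $g\in G$; but also $\bb,\bb'\in\wh\CR=M_W(\wh\CS)$ by part (a), and $wg\bb$ and $g\bb$ lie in the same $W$-orbit, so $\bb'=\max(Wg\bb)=g\cdot\bb$ in the dot action, and in particular $\bb'\in Wg\bb\subseteq WG\bb=GW\bb$. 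Hence $\bb'$ and $\bb$ have the same image in $\wh\CS/G$ — more carefully, since the $W$- and $G$-actions commute, $\bb'\in Wg\bb$ gives $\bb'=w g\bb$, and while $w$ need not be trivial, what we need is only that $\eta(\bb')=\eta(\bb)$; this does hold precisely because $\bb'$ and $g\bb$ are $W$-translates of each other is not enough on its own, so instead I use: the composite $\wh\CR\hookrightarrow\wh\CS\xrightarrow{\eta}\wh\CS/G$, viewed through the bijection $\wh\CR\cong\wh\CS/W$ of (a), becomes the natural projection $\wh\CS/W\to (\wh\CS/W)/G$, which by (b) is identified with $\wh\CS/H$ and hence with the set of $W$-orbits on $\wh\CS/G$. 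Under this identification the fibres of $\eta|_{\wh\CR}$ are exactly the images of the $G$-dot-orbits, which by (d) are the $\simeq$-classes. So $\eta|_{\wh\CR}$ is constant on $\simeq$-classes and induces a \emph{surjection} $\wh\CR/{\simeq}\twoheadrightarrow\eta(\wh\CR)$?

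Here I should be careful about the direction: what (b)+(d) actually give is that the set of $W$-orbits on $\wh\CS/G$ is in bijection with $\wh\CR/{\simeq}$, and separately that this same set is the image of $\wh\CR$ under (the (a)-transport of) $\eta$. The cleanest route is: $|\text{$W$-orbits on }\wh\CS/G| = |\wh\CR/{\simeq}|$ by (d); and the quotient map $\wh\CR\to\wh\CR/{\simeq}$ is a surjection, so $|\wh\CR/{\simeq}|\le|\wh\CR|$. To get the sharper bound with $\eta(\wh\CR)$ in place of $\wh\CR$, I would argue that the map $\wh\CR\to\wh\CR/{\simeq}$ factors through $\eta|_{\wh\CR}\colon\wh\CR\to\eta(\wh\CR)$, because $\simeq$ is coarser than the fibre relation of $\eta|_{\wh\CR}$: if $\eta(\bb)=\eta(\bb')$ with $\bb,\bb'\in\wh\CR$, then $\bb'=g\bb$ for some $g\in G\subseteq H$, so $\bb'\in H\bb$, i.e.\ $\bb\simeq\bb'$. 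Thus the fibres of $\eta|_{\wh\CR}$ are unions of... no — thus each fibre of $\eta|_{\wh\CR}$ lies inside a single $\simeq$-class, giving a surjection $\eta(\wh\CR)\twoheadrightarrow\wh\CR/{\simeq}$, whence $|\wh\CR/{\simeq}|\le|\eta(\wh\CR)|=|\eta(\wh\CR)|$. Combining with (d), the number of $W$-orbits on $\wh\CS/G$ is at most $|\eta(\wh\CR)|$, as claimed.

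The main obstacle I anticipate is purely bookkeeping: keeping straight the three models of the same set — $W$-orbits on $\wh\CS/G$, $\wh\CS/H$, and $\wh\CR/{\simeq}$ — and verifying that $\simeq$ is genuinely coarser than "same $G$-orbit inside $\wh\CR$" so that $\eta|_{\wh\CR}$ descends to a surjection onto $\wh\CR/{\simeq}$. Once that one implication ($\bb'=g\bb\Rightarrow\bb\simeq\bb'$, which is immediate since $g\in G\le H$) is in hand, the inequality $|\wh\CR/{\simeq}|\le|\eta(\wh\CR)|$ is formal, and parts (a)–(d) supply everything else. No real analytic or combinatorial difficulty remains; it is an exercise in composing the canonical bijections already established.
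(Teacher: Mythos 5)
Your final argument is correct and is essentially the paper's own proof: the fibres of $\eta|_{\wh\CR}$ lie inside single $\simeq$-classes (because $\eta(\bb)=\eta(\bb')$ forces $\bb'=g\bb\in H\bb$), so the number of $\simeq$-classes is at most the number of fibres, i.e.\ at most $\vert\eta(\wh\CR)\vert$, and part (d) converts this into the stated bound on $W$-orbits. The false start in the middle (trying to show $\bb\simeq\bb'\Rightarrow\eta(\bb)=\eta(\bb')$, which is the wrong and in general false direction) is correctly abandoned, and the argument you settle on is the right one.
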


\begin{proof} 
Part (a) follows from Theorem \ref{thm:fundreg}.  Part (b)  was already observed more generally in \ref{ss:ordunord}.
We prove (c). Let $\phi\colon  \wh \CR\to \wh \CS/W$ be the canonical bijection of (a).  
By definition, $\phi(\bb)=W\bb$ for $\bb\in \wh \CR$ and $\set{\phi^{-1}(\G)}=\Gamma\cap \wh \CR$ for any $W$-orbit  
$\Gamma\in \wh \CS/W$. Hence the dot action is determined by
$\set{g\cdot \bb}=Wg\bb\cap \wh \CR$ for $g\in G$ and $\bb \in \wh \CR$. But by Corollary \ref{cor:fund}, $Wg\bb\cap \wh \CR=\set{\max(Wg\bb)}$ and (c) follows. 

Now we prove (d). Let $\simeq$ be the equivalence relation on $\wh \CR$ for which the equivalence classes are the $G$-orbits in the dot action. Then by (c),  for $\bb,\bb'\in\wh\CR$ one has $\bb\simeq\bb'\iff\bb'\in \mset{\max(Wg\bb)\mid g\in G}$.  Certainly
$\bb\simeq \bb'$ implies $\bb'\in H\bb$. But if $\bb,\bb'\in\wh\CR$ with $\bb'\in H\bb$, one has $\bb'\in Wg\bb$ for some $g\in G$ and then $\bb'=\max(Wg\bb)$ by Corollary \ref{cor:fund} since $\bb\in \wh \CR$. This proves the first assertion of  (d), and the second then follows from (a).

To see (e), we need only note that the fibres of the restriction of $\eta$ to 
$\wh\CR$ are subsets of the $\simeq$-equivalence classes, whence the 
number of equivalence classes on $\wh\CR$ is at most the number of such fibres.
\end{proof}
\subsection{} Taking $\wh \CS=V^{n}$ and $G=\Sym_{n}$ 
in Proposition \ref{prop:genclass}(c) defines a dot action $(g,\bb)\mapsto g\cdot \bb=g\cdot_{n}\bb$ of  
$\Sym_{n}$ on $\CC^{(n)}_{W}$ for each $n$ (of which all 
possible dot actions as in Proposition \ref{prop:genclass}(c) are 
various restrictions).  We record the following trivial but useful 
relations between these 
$\cdot_{n}$ actions for varying $n$
\begin{prop}\label{prop:dots} Let notation be as above.
Let $\bb=(\b_{1},\ldots, \b_{n})\in \CC_{W}^{(n)}$, $\s\in \Sym_{n}$ and $m\in \Nat$ with $0\leq m\leq n$. Set $\bb':=\t_{m}(\bb)=(\b_{1},\ldots, \b_{m})\in \CC^{(n)}_{W}$,
$W':=W_{\bb'}$ and $\bb'':=(\b_{m+1},\ldots, \b_{n})\in
 \CC^{(n-m)}_{W'}$. Denote the $\cdot$ action of $\Sym_{n-m}$ on $\CC^{(n-m)}_{W'}$ by $\cdot_{n-m}'$.
\begin{num}
\item If $\s\bb\in \CC_{W}^{(n)}$, then $\s\cdot \bb=\s\bb$.
\item  Suppose  $\set{1,\ldots, m}$ is $\s$-stable. Let $\s'
$ be the restriction of $\s$ to a permutation of $\set{1,\ldots, m}$. Then $\tau_{m}(\sigma\cdot_{n}\bb)=\sigma'\cdot_{m}(\tau_{m}\bb)$.
\item Suppose that $\s$ fixes $i$ for $i=1,\ldots, m$.
Let $\s'\in \Sym_{n-m}$ be defined by $\s'(i):=\s(i+m)-m$ for $i=1,\ldots, n-m$.
Then $\s\cdot_{n} \bb=\s\cdot_{n}(\bb',\bb'')=(\bb',\s'\cdot'_{n-m}\bb'')$ i.e. $\s\cdot_{n}\bb=(\b_{1},\ldots, \b_{m},\b_{m+1}',\ldots, \b_{n}')$ where $(\b'_{m+1},\ldots, \b'_{n}):=
\s'\cdot'_{n-m}\bb''$.
\end{num}
\end{prop}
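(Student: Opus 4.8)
The plan is to read everything off the two basic facts we already have: the characterisation in Corollary \ref{cor:fund} that $\CC^{(n)}_W$ is exactly the set of $\preceq$-maxima of $W$-orbits, and the recursive description \eqref{eq:fundchrec} of membership in $\CC^{(n)}_W$ together with the recursive construction of orbit representatives in the proof of Theorem \ref{thm:fundreg}(b). Part (a) is immediate: by Proposition \ref{prop:genclass}(c), $\s\cdot_n\bb=\max(W\s\bb)$, and if $\s\bb$ already lies in $\CC^{(n)}_W=M_W(V^n)$ then it is itself the maximum of its $W$-orbit $W\s\bb$, so $\s\cdot_n\bb=\s\bb$. For (b), first observe that if $\set{1,\dots,m}$ is $\s$-stable then so is its complement, and the place-permutation action of $\s$ on $V^n=V^m\oplus V^{n-m}$ is block-diagonal: $\s\bb=(\s'\tau_m\bb,\,\s''\bb'')$ where $\s'$, $\s''$ are the restrictions of $\s$ to the two blocks. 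Applying $w\in W$ diagonally commutes with this, so $\max(W\s\bb)$ can be computed in lexicographic order by first maximising the first block over $W$ (using Lemma \ref{lem:tot}(b), maximising over $W$ of the first $m$ coordinates), which gives exactly $\max(W\s'\tau_m\bb)=\s'\cdot_m\tau_m\bb$ in the first $m$ slots; hence $\tau_m(\s\cdot_n\bb)=\s'\cdot_m(\tau_m\bb)$.

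Part (c) is the substantive one and refines (b) in the special case where $\s$ fixes $1,\dots,m$ pointwise; then $\s'$ in the sense of (b) is the identity on the first block, $\s''$ corresponds under the shift $i\mapsto i+m$ to the permutation $\s'\in\Sym_{n-m}$ of the statement, and $\s\bb=(\bb',\s''\bb'')=(\b_1,\dots,\b_m,\bb''_{\s'})$ with $\bb''_{\s'}$ the place-permutation of $\bb''$ by $\s'$. We must show $\max(W\s\bb)=(\bb',\,\s'\cdot'_{n-m}\bb'')$. Since $\bb\in\CC^{(n)}_W$ we have $\bb'=\tau_m\bb\in\CC^{(m)}_W$, and by (b) (or directly, since the first $m$ coordinates of $\s\bb$ are already $\bb'\in\CC^{(m)}_W=M_W(V^m)$ and lexicographic maximisation over $W$ fixes them while passing to the stabiliser $W'=W_{\bb'}$) the first $m$ coordinates of $\max(W\s\bb)$ are $\bb'$, and any $w\in W$ achieving the maximum lies in $W'=\Stab_W(\bb')$ — here is where Theorem \ref{thm:fundreg}(a) enters, identifying $W_{\bb'}$ as the relevant parabolic. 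Therefore $\max(W\s\bb)=(\bb',\,\max_{w\in W'}(w(\bb''_{\s'})))=(\bb',\,\max(W'\cdot\bb''_{\s'}))$, and since $\bb''\in\CC^{(n-m)}_{W'}$ and $\s'$ acts on $V^{n-m}$ by place permutation commuting with the diagonal $W'$-action, $\max(W'\cdot\bb''_{\s'})$ is by Proposition \ref{prop:genclass}(c) applied with the group $W'\times\Sym_{n-m}$ acting on $V^{n-m}$ exactly $\s'\cdot'_{n-m}\bb''$. Combining gives the claimed identity, and the explicit coordinate formula $\s\cdot_n\bb=(\b_1,\dots,\b_m,\b'_{m+1},\dots,\b'_n)$ with $(\b'_{m+1},\dots,\b'_n):=\s'\cdot'_{n-m}\bb''$ follows.

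The only point requiring a little care — and the place I would slow down in the write-up — is the claim that any $w\in W$ that realises $\max(W\s\bb)$ must stabilise the initial block $\bb'$, i.e. lie in $W'=W_{\bb'}$; this is precisely the separation-of-variables step, and it rests on \eqref{eq:fundchrec} plus Theorem \ref{thm:fundreg}(a)–(b) (uniqueness of the representative in $\CC^{(m)}_W$ forces $w\tau_m\bb=\tau_m\bb$ once $w\s\bb\in\CC^{(n)}_W$), rather than on any new computation. Once that is in hand, (c) is just the assertion that the dot action "factors through" the decomposition $V^n\cong V^m\oplus V^{n-m}$, which is what \eqref{eq:fundchrec} was set up to give. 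I expect no genuine obstacle beyond bookkeeping with the shift $i\mapsto i+m$ relating $\s''$ and $\s'$, and being consistent about the convention $\s(v_1,\dots,v_n)=(v_{\s^{-1}(1)},\dots,v_{\s^{-1}(n)})$ when checking that the restriction of a block-stabilising permutation acts as place permutation on each block.
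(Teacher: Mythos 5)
Your argument is correct. The paper itself gives no proof of this proposition (it is declared a ``straightforward consequence of the definitions'' with details left to the reader), and what you have written is exactly the natural filling-in of those details: all three parts reduce, via Proposition \ref{prop:genclass}(c) and Corollary \ref{cor:fund}, to the identity $\s\cdot_n\bb=\max(W\s\bb)$, combined with the observation that lexicographic maximisation commutes with truncation ($\tau_m(\max A)=\max(\tau_m A)$) and that the elements of $W$ realising the maximum on the first block form the coset $W'=\Stab_W(\bb')$, so that the remaining coordinates are maximised over $W'$ alone. Your flagged ``point requiring care'' is indeed the only substantive step, and your treatment of it (via \eqref{eq:fundchrec} and Theorem \ref{thm:fundreg}(a)--(b)) and of the index shift relating $\s''$ and $\s'$ under the convention $\s(v_1,\ldots,v_n)=(v_{\s\inv(1)},\ldots,v_{\s\inv(n)})$ are both sound.
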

\begin{proof}
This is a straightforward consequence of the definitions.
Details are left to the reader.
\end{proof}
\subsection{Automorphisms} \label{ss:diagaut}
Denote the group of all (linear) isometries of $(V,\mpair{-,-})$ 
which restrict to a permutation of the simple roots $\Pi$  by $D$.
Then $D$ acts diagonally on $V^{n}$ and it is easily seen 
that  $\CC_{W}^{(n)}$ is $D$-stable.  It is well known that if 
$\linspan(\Phi)=V$ and the $W$-invariant 
inner product $\mpair{-,-}$ is chosen suitably
(by rescaling  if necessary its restrictions to the linear spans of the components of $\Phi$), then $D$ 
 identifies with  the group $D'$ of all diagram 
 automorphisms of $\Pi$ (i.e. the 
  group of automorphisms of the 
 Coxeter system $(W,S)$).   It follows that in general,  $\CC_{W}^{(n)}\cap \Phi^{n}$ is invariant under the diagonal 
 action of $D'$ on $\Phi^{n}$ (even if $\linspan(\Phi)\neq V$ or  $\mpair{-,-}$ is not chosen in this way).

 \section{Stratification of $V^{n}$}\label{sec:strat}
 \subsection{Cones} Recall that a subset of a topological space $V$ is \emph{locally closed} if 
 it is open in its closure, or equivalently, if it is the \label{topology} intersection of an 
 open subset and a closed subset of $V$. A  subset of $V$ is said to be constructible if it is a finite union of locally 
 closed subsets  of $V$. By a cone in a real vector space, we mean a subset which is closed under addition and closed  under multiplication by positive scalars. Note that cones may or
 may not be constructible.
\subsection{Facets} \label{ss:facets}
The fundamental chamber $\CC=\CC_{W}$ and certain notions below depend not
only on $W$ and $\Phi$, but  also on the simple system $\Pi$; this dependence will be made 
explicit in notation to be introduced presently.

 For $J\seq I\seq S$ define \begin{equation}\label{eq:facets1}
\CC_{I,J}:=\mset{v\in V\mid
\mpair{\a,v}=0 \text{ \rm for $\a\in \Pi_{J}$ and }
\mpair{\a,v}>0 \text{ \rm for $\a\in \Pi_{I\sm J}$}}
\end{equation}
This is a non-zero locally closed cone  in  $V$. From \cite{Bour}, \begin{equation}\label{eq:facets2}
\CC_{W_{I}}=\bdisjun_{J\seq I } \CC_{I,J},\qquad
\ol{\CC_{I,J}}=\bdisjun_{K: J\seq K\seq I}\CC_{I,K}
\end{equation} 
 For $J\seq I\seq S$, let $W^{J}:=\mset{w\in W\mid w(\Pi_{J})\seq \Phi_{+}}$ and 
 $W^{J}_{I}:=W_{I}\cap W^{J}$. From \cite{Bour},  one has  
 $W_{I}=\bdisjun_{w\in W^{J}_{I}}wW_{J}$ and each element $w\in W^{J}_{I}$ satisfies 
 $l(wx)=l(w)+l(x)$ for all $x\in W_{J}$. 
 
Fix $I\seq S$. The sets $w(\CC_{I,J})$ for $w\in W_{I}$ and $J\seq I$ are called  the 
\emph{facets} (of $W_{I}$ acting  on $V$). They are locally closed cones in $V$, and 
the closure of any facet is a union of facets.  It is well known that any two facets  
either coincide or are disjoint. 
  The setwise stabiliser in $W_I$ of $\CC_{I,J}$ coincides with the  stabiliser in $W_I$ of any 
  point of $\CC_{I,J}$ and is equal to $W_{J}$. It follows that for $I\seq S$,  one has 
  the decomposition \begin{equation}\label{facetdec}
V=\bdisjun_{J\colon I\seq J}\,\,\bdisjun _{w\in W^{J}_{I}}\ w(\CC_{I,J})
\end{equation}
of $V$ as a union of pairwise disjoint facets.

\subsection{Strata} \label{ss:strat}
 The family of subsets $w(\CC_{S,J})$ for $J\sneq S$ 
and $w\in W$, or rather the complex of spherical simplices cut out by
their intersections with the unit sphere in $V$,
 is  known as the \emph{Coxeter complex} of $W$ 
on $V$ (see \cite{Car}, \cite{Hum}, \cite{GL83}); there are also other closely  related meanings 
of the term Coxeter 
complex in the literature,  for instance (cf. \cite{CuLe82}) where the Coxeter complex is 
an (abstract)  simplicial complex or chamber complex with vertices
which are cosets of proper parabolic subgroups. We shall now
define a similar ``complex'' for $W$ acting on $V^{n}$, where  $n\in \Nat_{\geq 1}$,  
and show that it has significant similarities to, and differences from, the Coxeter complex.

  Let \begin{equation}\label{eq:strat1}
 \CI=\CI^{(n)}:=\mset{(I_{0},\ldots, I_{n})\mid S=I_{0}\sreq I_{1}\sreq 
 \ldots \sreq I_{n}}
\end{equation}  denote the set of all weakly decreasing sequences of  $n+1$ 
subsets of $S$ with $S$ as first term.  For $\bI=(I_{0},\ldots, I_{n})\in \CI$, define  
\begin{equation}\label{eq:strat2}
\CX_{\bI}:=\CC_{I_{0},I_{1}}\times \CC_{I_{1},I_{2}}\times \ldots \times
\CC_{I_{n-1},I_{n}}\seq V^{n}.
\end{equation}
The sets $w(\CX_{\bI})$  for $\bI\in \CI^{(n)}$ and $w\in W$ are non-empty 
locally closed cones in $V^{n}$ which  will be called the  \emph{strata} of
$V^{n}$ (for $(W,S)$ or $W$).   Their basic properties are listed in the  Theorem below. 
\begin{theorem}\label{thm:strat}\begin{num} \item  If $\bI\in \CI^{(n)}$  then $\CX_{\bI}\seq \CC^{(n)}$. 
\item  If $\bI\in \CI^{(n)}$, $w\in W$ and $\bv\in  w(\CX_{\bI})$, then 
$\stab_{W}(\bv)=wW_{I_{n}}w^{-1}$.
\item Let $v,w\in W$ and $\bI,\bJ\in \CI^{(n)}$. 
 Then the following conditions are equivalent 
\begin{subconds} \item  $v(\CX_{\bI})\cap w(\CX_{\bJ})\neq \eset$.
\item $v(\CX_{\bI})=w(\CX_{\bJ})$.
\item   $\bI=\bJ$ and $v^{-1}w\in W_{I_{n}}$.
\end{subconds}
\item If $\bI\in \CI^{(n)}$ and  $w\in W$, then   $\stab_{W}(w(\CX_{\bI}))=wW_{I_{n}}w^{-1}$.
\item The sets $\CC^{(n)}$ and $V^{n}$ are the  disjoint unions of the  strata they contain.\item The topological closure of any stratum of $V^{n}$ is a union of  strata.\end{num}
\end{theorem}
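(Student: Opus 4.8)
The plan is to prove the six assertions more or less in the stated order, since the later parts lean on the earlier ones. For (a), I would argue by induction on $n$ using the recursive characterisation \eqref{eq:fundchrec}: given $\bI=(I_0,\dots,I_n)\in\CI^{(n)}$ and $\bv=(v_1,\dots,v_n)\in\CX_{\bI}$, the truncation $\tau_{n-1}(\bv)$ lies in $\CX_{(I_0,\dots,I_{n-1})}\seq\CC^{(n-1)}$ by induction, and by Theorem \ref{thm:fundreg}(a) the stabiliser $W_{\bv,n-1}$ is the standard parabolic $W_{I_{n-1}}$ (one checks $I_{n-1}=\mset{s_\a\mid\a\in\Pi,\ \mpair{\a,v_i}=0,\ i\le n-1}$ directly from \eqref{eq:facets1} and the nesting $I_{n-1}\sreq\ldots\sreq I_1$, using that the $\CC_{I_{j-1},I_j}$ force exactly the right vanishing/positivity). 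Since $v_n\in\CC_{I_{n-1},I_n}\seq\CC_{W_{I_{n-1}}}$ by \eqref{eq:facets2}, we get $\bv\in\CC^{(n)}$. The same computation simultaneously identifies $I_{I_n}$-type data, so that (b) drops out: for $\bv\in\CX_{\bI}$, $\stab_W(\bv)=W_{\bv,n}$, and $v_n\in\CC_{I_{n-1},I_n}$ means precisely that the simple roots in $\Pi_{I_{n-1}}$ vanishing on $v_n$ are exactly those in $\Pi_{I_n}$, so $W_{\bv,n}=W_{I_n}$; conjugating by $w$ gives $\stab_W(w\bv)=wW_{I_n}w^{-1}$.

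For (c) the implications $(iii)\Rightarrow(ii)\Rightarrow(i)$ are easy: if $v^{-1}w\in W_{I_n}$ then $W_{I_n}$ stabilises $\CX_{\bI}$ pointwise-up-to... — more precisely, by (b) every point of $\CX_{\bI}$ is fixed by $W_{I_n}$, so $w(\CX_{\bI})=v(v^{-1}w)(\CX_{\bI})=v(\CX_{\bI})$; and $(ii)\Rightarrow(i)$ is trivial since strata are non-empty. The substantive implication is $(i)\Rightarrow(iii)$. Suppose $v(\CX_{\bI})\cap w(\CX_{\bJ})\neq\eset$; replacing the pair $(v,w)$ by $(1,v^{-1}w)$ we may assume $v=1$, so we have $\bx\in\CX_{\bI}$ and $u\in W$ with $u(\bx)\in\CX_{\bJ}\seq\CC^{(n)}$. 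But $\bx\in\CC^{(n)}$ too by (a), so by the uniqueness in Theorem \ref{thm:fundreg}(b), $u(\bx)=\bx$, i.e. $u\in\stab_W(\bx)=W_{I_n}$ by (b). It remains to show $\bI=\bJ$. Since $u\in W_{I_n}$ and $u(\bx)=\bx$ with $\bx=(x_1,\dots,x_n)$ and also $\bx=u(\bx)\in\CX_{\bJ}$, I read off $\bI$ and $\bJ$ from $\bx$ by a common recipe: $I_j$ (resp. $J_j$) is determined as $\mset{s_\a\mid\a\in\Pi,\ \mpair{\a,x_i}=0 \text{ for } i\le j}$ together with the requirement that $I_{j-1}\sreq I_j$ — this is forced because $x_j\in\CC_{I_{j-1},I_j}$ pins down $I_j$ inside $I_{j-1}$ via the vanishing locus, and likewise for $J$. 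Hence $I_j=J_j$ for all $j$, giving $(iii)$. The main obstacle is exactly this last bookkeeping: making precise that the nested sequence $\bI$ is an \emph{invariant} of the point $\bx\in\CX_{\bI}$, not just of its $W$-orbit; the key is the identity $\CC_{W_{I_{j-1}}}=\bdisjun_{K\seq I_{j-1}}\CC_{I_{j-1},K}$ from \eqref{eq:facets2}, which says each successive coordinate lands in exactly one facet of the previous stabiliser's chamber.

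Part (d) follows from (b) and (c): the setwise stabiliser of $w(\CX_{\bI})$ contains $wW_{I_n}w^{-1}$ by (b) (which fixes it pointwise), and conversely if $x\in W$ satisfies $x\cdot w(\CX_{\bI})=w(\CX_{\bI})$, then $(xw)(\CX_{\bI})\cap w(\CX_{\bI})\neq\eset$, so by $(i)\Rightarrow(iii)$ of (c), $w^{-1}xw\in W_{I_n}$. Part (e): that $V^n$ is the disjoint union of its strata is the iterated facet decomposition \eqref{facetdec} applied coordinate by coordinate — writing $V^n=V\times V^{n-1}$, the first factor decomposes into facets $w_0(\CC_{S,I_1})$, and on each the stabiliser is $w_0W_{I_1}w_0^{-1}$, whose action on the remaining $V^{n-1}$ is again handled by induction; disjointness and exhaustiveness are then packaged by the equivalence in (c). That $\CC^{(n)}$ is the disjoint union of the strata it contains follows since by Theorem \ref{thm:fundreg}(b) and (a) each $W$-orbit meets $\CC^{(n)}$ in one point and meets a given stratum-orbit $W(\CX_{\bI})$ in exactly the points of $\CX_{\bI}$ (using (a) to see $\CX_{\bI}\seq\CC^{(n)}$, and (c) for disjointness). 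Finally, for (f), I would prove $\ol{\CX_{\bI}}=\bdisjun\CX_{\bJ}$ where $\bJ$ ranges over sequences obtained from $\bI$ by enlarging the terms — precisely, $\ol{\CX_{\bI}}$ is a union over $\bJ=(S,K_1,\dots,K_n)$ with $I_j\seq K_j$ and $K_j\seq K_{j-1}$ and $K_j\seq I_{j-1}$, by applying $\ol{\CC_{I,J}}=\bdisjun_{J\seq K\seq I}\CC_{I,K}$ from \eqref{eq:facets2} to each tensor factor and using that closure commutes with finite products. Then closure of $w(\CX_{\bI})$ is $w(\ol{\CX_{\bI}})$, still a union of strata. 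A mild subtlety here is that the factor $\CC_{I_{j-1},I_j}$ sits inside $V$ with a closure taken in $V$, whereas in the product the $(j{+}1)$-st coordinate of a limit point need not a priori have the first $j$ coordinates unchanged — but since the closure of a product is the product of closures in $V^n$, this causes no trouble, and one only needs to re-sort the resulting product of facet-unions back into the form $\CX_{\bJ}$ for legitimate $\bJ\in\CI^{(n)}$, which is exactly the condition $S\sreq K_1\sreq\ldots\sreq K_n$ together with $K_j\seq I_{j-1}$ coming from each factor's closure formula.
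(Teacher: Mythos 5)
Your treatment of (a)--(e) is essentially sound and essentially the paper's own argument: (a) and (b) follow from the inductive identification $W_{\bv,i}=W_{I_{i}}$ for $\bv\in\CX_{\bI}$, and (d), (e) are formal consequences. For (c)(i)$\Rightarrow$(iii) you take a mildly different route --- using the uniqueness of orbit representatives in $\CC^{(n)}_{W}$ (Theorem \ref{thm:fundreg}(b)) to force the two intersection points to coincide, then recovering $\bI$ and $\bJ$ from that single point via its truncated stabilisers --- where the paper argues coordinate by coordinate from the disjointness of facets; both are valid, and yours is arguably cleaner since it reuses the fundamental-domain machinery already in place.

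Part (f), however, contains a genuine error. You assert that $\ol{\CX_{\bI}}$ is a disjoint union of strata $\CX_{\bJ}$ with $\bJ\in\CI^{(n)}$, obtained by applying $\ol{\CC_{I,J}}=\bdisjun_{J\seq K\seq I}\CC_{I,K}$ in each factor and ``re-sorting''. But the products that this factor-wise closure actually produces are $\CC_{I_{0},K_{1}}\times\CC_{I_{1},K_{2}}\times\cdots\times\CC_{I_{n-1},K_{n}}$: the $j$-th factor is a facet of $W_{I_{j-1}}$, whereas $\CX_{\bJ}$ with $\bJ=(S,K_{1},\ldots,K_{n})$ requires the $j$-th factor to be a facet $\CC_{K_{j-1},K_{j}}$ of the \emph{larger} group $W_{K_{j-1}}$. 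After passing to the boundary one has $K_{j-1}\sreq I_{j-1}$ in general, so $\CC_{I_{j-1},K_{j}}$ is not a single facet of $W_{K_{j-1}}$ at all --- it is a union of several $W_{K_{j-1}}$-translates of such facets --- and the product cannot be re-sorted into a single $\CX_{\bJ}$. Concretely, in the type $A_{1}$ example of \S\ref{ss:A1examp} with $n=2$ and $\bI=(S,\eset,\eset)$, the boundary piece is $\CC_{S,S}\times\CC_{\eset,\eset}=\set{0}\times\real$, which is the union of the \emph{three} strata $\CX_{0}$, $\CX_{1}=\set{0}\times\real_{>0}$ and $s(\CX_{1})=\set{0}\times\real_{<0}$; your recipe would replace it by the single stratum $\CX_{(S,S,\eset)}=\set{0}\times\real_{>0}$ and lose $s(\CX_{1})$ and $\CX_{0}$. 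Remark (2) following the theorem records precisely this phenomenon: the closure of a stratum contained in $\CC^{(n)}_{W}$ generally contains strata \emph{outside} $\CC^{(n)}_{W}$, so no formula indexed only by sequences $\bJ\in\CI^{(n)}$ can be correct. Supplying the missing translating elements is the actual content of the paper's proof of (f): Lemma \ref{lem:facclos}(b) writes the closed facet of a smaller reflection subgroup as a union of facets $w(\CC_{\G,\D})$ of the larger group over suitable coset representatives $w$, and Theorem \ref{thm:stratproof} threads this through the $n$ factors inductively, producing the words $w_{n}\cdots w_{1}$ in \eqref{eq:stratcl}. Your proposal is missing this entire mechanism.
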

\begin{remark} (1)  The fundamental region $\CC^{(n)}_{W}$ is constructible (in fact, it is a finite union of locally closed cones).

 (2) If $n=1$, the fundamental region $\CC_{W}^{(n)}=\CC$ is closed in $V$, but in general,  
 $\CC^{(n)}_{W}$ is a  constructible subset of $V^{n}$ which need not be locally closed;
moreover the closure of a stratum in $\CC_{W}^{(n)}$ may contain strata outside $\CC^{(n)}_{W}$.

(3) If $n=1$, then the facets (here called strata) of $W$ on $V$ are the maximal 
connected subsets of $V$, all points of
which have the same stabiliser. For $n>1$, the stratum containing $\bv$ 
is precisely the connected component containing $\bv$ of the space of all  $\bu\in V^{n}$ such that $\stab_{W}(\tau_{i}(\bu))=\stab_{W}(\tau_{i}(\bv))$ for $i=1,\ldots, n$. Recall that
here $\tau_i$ is the truncation map $V^n\to V^i$
given by $\tau_i(u_1,\dots,u_n)=(u_1,\dots,u_i)$.

 \end{remark}
 
\ssect{An example}\label{ss:A1examp} Before giving its proof, we illustrate Theorem \ref{thm:strat}  and its following remarks in the simplest non-trivial situation.

Let $W=\set{1,s}$, and $S=\set{s}$, regarded as Coxeter group of type $A_{1}$ acting as reflection group on $V:=\real$ with $s$ acting by multiplication by $-1$, with root system $\Phi=\set{\pm 1}$ and unique positive root $\a:=1$.

Then $\CC_{W}=\real_{\geq 0}$, $\CC_{W_{\eset}}=\real$,
$\CC_{S,S}=\set{0}$, $\CC_{S,\eset}=\real_{>0}$ and 
$\CC_{\eset,\eset}=\real$. 
If $n\in \Nat_{\geq 1}$, then $\CC^{(n)}_{W}$ is the set of all 
$(\l_{1},\ldots,\l_{n})\in \real^{n}$ such that if $\l_{j}\neq 0$ and $\l_{i}=0$ for all $i<j$, then $\l_{j}>0$. In other words, it consists of zero and all non-zero vectors in $\real^{n}$ with their first non-zero coordinate positive.  Note that this is the cone of non-negative vectors of a vector space total ordering of $V$.

A typical stratum in $\CC^{(n)}_{W}$ is of the form
$\CX_{i}:=\CX_{S,\ldots, S,\eset,\ldots,\eset}$ for some $0\leq i\leq n$. where there are $n+1$ subscripts on $\CX$, of which the first $n-i+1$ are equal to $S$ and the last $i$ are equal to $\eset$.
One readily checks that $\CX_{0}=s(\CX_{0})=\set{(0,\ldots,0)}$ and that for $i>0$,
\begin{equation*}
\CX_{i}=\set{0}\times \ldots \times \set{0}\times \real_{> 0}\times \real\times \ldots\times  \real=\set{0}^{n-i}\times \real_{>0}\times \real^{i-1}.
\end{equation*}

Thus, there are $2n+1$ distinct strata of  $W$ on $V^{n}$, namely
$\CX_{0}$ and $\CX_{i},s(\CX_{i})$ for $i=1,\ldots, n$.
One readily checks that the closure of a stratum is given by its union
with the strata below it in the following Hasse diagram:
\begin{equation*}
\xymatrix@!0{
{\CX_{n}}&&{s(\CX_{n})}\\
{\CX_{n-1}}\ar@{-}[u]\ar@{-}[urr]&&{s(\CX_{n-1})}\ar@{-}[u]\ar@{-}[ull]\\
{\CX_{2}}\ar@{.}[u]\ar@{.}[urr]&&{s(\CX_{2})}\ar@{.}[u]\ar@{.}[ull]\\
{\CX_{1}}\ar@{-}[u]\ar@{-}[urr]&&{s(\CX_{1})}\ar@{-}[u]\ar@{-}[ull]\\
&{\CX_{0}}\ar@{-}[ur]\ar@{-}[ul]&.
}
\end{equation*}

 \subsection{Proof of Theorem \ref{thm:strat}(a)--(e)}
 Let $\bI\in \CI^{(n)}$ and $\bv\in \CX_{\bI}$.
 Then $v_{i}\in \CC_{I_{i-1},I_{i}}$ for $i=1,\ldots, n$, so $\Stab_{W_{I_{i-1}}}(v_{i})=W_{I_{i}}$. Since $W_{\bv,i}=\stab_{W_{\bv,i-1}}(v_{i})$, it follows by induction that $W_{\bv,i}=W_{I_{i}}$.  By definition, $\bv\in \CC^{(n)}$
 and $\stab_{W}(\bv)=W_{I_{n}}$. This proves (a)   and (b).
 In (c), (iii) implies (ii) by (b) and it is trivial that  (ii) implies (i).  We show 
 that (i) implies (iii).  Suppose that (i) holds: i.e. $v(\CX_{\bI})\cap w(\CX_{\bJ})\neq \eset$.
  That is, for $i=1,\ldots, n$,  $\CC_{I_{i-1},I_{i}}
  \cap v^{-1}w(\CC_{J_{i-1},J_{i}})\neq \eset$.  We have $I_{0}=J_{0}=S$ 
  and $v^{-1}w\in W_{I_{0}}=W$. The properties of facets in \S\ref{ss:facets} imply 
  by induction on $i$ that
 for $i=0,\ldots, n$, $I_{i}=J_{i}$ and $v^{-1}w\in W_{I_{i}}$. This shows
 that (i) implies (iii), and completes the proof of (c). 
 Part (d) follows immediately from (c) and (b).
 
 For (e), we claim that 
 \begin{equation}\label{eq:union}
 \CC^{(n)}=\bigcup_{\bI\in \CI^{(n)}}\CX_{\bI},\qquad  V^{n}=\bigcup_{\bI\in \CI^{(n)}}\bigcup_{w\in W^{I_{n}}}w(\CX_{\bI}).
 \end{equation}
 
 To prove the assertion about $\CC^{(n)}$, note first that the right hand side
 is included in the left. To prove the converse, let $\bv\in \CC^{(n)}$.
 Using Theorem \ref{thm:fundreg},  write  $W_{\bv,i}=W_{I_{i}}$ where $I_{i}\seq S$, 
 for $i=0,\ldots,n$. Then clearly $\bI:=(I_{0},\ldots, I_{n})\in \CI^{(n)}$.  Since $v_{i}\in \CC_{W_{\bv,i-1}}$ and $\Stab_{W_{\bv,i-1}}(v_{i})=W_{\bv,i}$, it follows by 
 induction on $i$ that $v_{i}\in\CC_{I_{i-1},I_{i}}$. Hence $\bv\in \CX_{\bI}$, 
 proving the above formula for $\CC^{(n)}$.
 Since $V^{n }=\cup_{w\in W}w(\CC^{(n)})$, the above assertion concerning $V^{n}$ 
 follows using  (c), which also implies that the facets  in the unions
 \ref{eq:union} are  pairwise distinct and disjoint.

\ssect{Distinguished coset representatives} \label{ss:cosrep}
 The proof of Theorem \ref{thm:strat}(f) (which is given in \ref{stratproof}) involves relationships 
 between closures of  facets with respect to  different parabolic   subgroups
 of $W$. These results  actually apply to arbitrary reflection subgroups,
  and we prove them in that generality (there is no simplification in the 
 formulations or proofs of the results for parabolic subgroups alone). In 
 order to formulate the results, we shall need additional background on 
 reflection subgroups and  more detailed notation 
 which indicates dependence of notions such as  facets,  coset representatives etc  
 on the chosen simple systems for the  reflection subgroups involved. The results needed 
 are simple extensions (given  in \cite{DyRef}) of facts from \cite{Bour} which are
 well known in the case of standard parabolic subgroups. 

Recall that a \emph{simple subsystem} $\G$ of  $\Phi$ is 
defined to be  a simple system $\G$ of some root subsystem of $\Phi$. 
For such a simple subsystem $\G$, let $S_{\G}:=\mset{s_{\a}\mid \a\in \G}$ and $W_{\G}=\mpair{S_{\G}}$.
Then $(W_{\G},S_{\G })$ is a Coxeter system, the length function of which we denote 
as $l_{\G}$. Denote the set of roots of
$W_{\G}$ as $\Phi_{\G}=\mset{\a\in \Phi\mid s_{\a}\in W_{\G}}=W_{\G}\G$ 
and the set of positive roots of $\Phi_{\G}$ with respect to its simple 
system $\G$ as $\Phi_{\G,+}$

 Let $\G$, $\G'$  be    simple subsystems of $\Phi$ 
 such that $W_{\G'}\seq W_{\G}$. Let  
 \begin{equation}\label{eq:cosrep1}
 \tensor*{W}{^{\G'}_{\G}}:=\mset{w\in W_{\G}\mid w(\G')\seq \Phi_{W_{\G},+}}.
\end{equation}
 Evidently one has 
\begin{equation}\label{eq:cosrep1a}
W_{\G}^{u(\G')}=W^{\G'}_{\G}u^{-1}\text{ \rm for all $u\in W_{\G}$}.
\end{equation}
It is known from \cite{DyRef} that under the additional assumption that 
$\Phi_{\G',+}\seq \Phi_{\G,+}$,  
$\tensor*{W}{^{\G'}_{\G}}$ is a set of coset representatives for  
$W_{\G}/W_{\G'}$ and that each element  
 $w\in \tensor*{W}{^{\G'}_{\G}}$ is the unique
element of minimal length in the coset $wW_{\G'}$ of $W_{\G}$
with respect to  the length  function  $l_{\G}$.  Moreover,  
 \begin{equation}  \label{eq:cosrep1b} 
 \tensor*{W}{^{\G'}_{\G}}=\mset{w\in W_{\G}\mid l_{\G}(ws_{\a})>l_{\G}(w)
 \text{ \rm for all $\a\in \G'$}},\quad \text{\rm if $\Phi_{\G',+}\seq \Phi_{\G,+}$} 
 \end{equation}
 
 Now in general if $W_{\G'}\seq W_{\G}$, there is a unique simple system
 $\G''$ for $\Phi_{\G'}$ such that $\G''\seq \Phi_{+}$ and a unique $u\in W_{\G'}$ 
 such that  $\G''=u(\G')$. It follows from \eqref{eq:cosrep1a}
 and the preceding comments that in this more general situation, it is still true that
 $\tensor*{W}{^{\G'}_{\G}}$ is a set of coset representatives for $W_{\G}/W_{\G'}$.
 
  Similarly, define
 \begin{equation}\label{eq:cosrep2}
 \tensor*[^{\G'}]{W}{_{\G}}:=(\tensor*{W}{^{\G'}_{\G}})^{-1}=\mset{w\in W_{\G}\mid
 w^{-1}(\G')\seq \Phi_{W_{\G},+}}.
 \end{equation} 
 This is  a set   of  coset representatives in $W_{\G'}\backslash W_{\G}$,
 each of minimal length in its coset if $\Phi_{\G', +}\seq \Phi_{\G,+}$. 
Further,  
\begin{equation}\label{eq:cosrep2a} 
\tensor*[^{u(\G')}]{W}{_{\G}}=u\,
 \tensor*[^{\G'}]{W}{_{\G}} \text{ \rm for $u\in W_{\G}$}
 \end{equation}  and 
  \begin{equation}  \label{eq:cosrep2b}  \text{\rm if $\Phi_{\G',+}\seq \Phi_{\G,+}$, then } 
  \tensor*[^{\G'}]{W}{_{\G}} =\mset{w\in W_{\G}\mid l_{\G}(s_{\a}w)>l_{\G}(w) 
  \text{ \rm for all $\a\in \G'$}}.
 \end{equation}

%  If $w\in W_{\G}$, we may write $w=w^{\G'}w_{\G'}$ for unique $w_{\G'}\in W_{\G'}$ 
% and $w^{\G'}\in  \tensor*{W}{^{\G'}_{\G}}$. (As is  well known (see \ref{facets}),  
% if $\G'\seq \G$, then $l(w)=l(w^{\G'})+l(w_{\G'})$, but this does not hold in general).  

\ssect{Further notation} \label{ss:funcloslem}  We now expand the notation of 
\S\ref{ss:facets} to include the possibility of non-parabolic reflection subgroups. 
For  any simple subsystem $\G$  of $\Phi$, let  
\begin{equation}\label{eq:fundch}
 \CC_{\G}:=\mset{v\in V\mid \mpair{v,\G}\seq \real_{\geq 0}}=
 \mset{v\in V\mid \mpair{v,\Phi_{\G,+}}\seq \real_{\geq 0}}.
 \end{equation}
 denote the corresponding closed fundamental chamber for $(W_{\G},S_{\G})$ acting on $V$.
 For any $\D\seq \G$, let 
 \begin{equation}\label{eq:facet}
 \CC_{\G,\D}:=\mset{v\in V\mid \mpair{v,\G\sm \D}\seq \real_{>0}, \mpair{v,\D}=0}
 \end{equation} denote the (unique)  facet of $W_{\G}$ on $V$ which is open in 
 $\CC_{\G}\cap \D^{\perp}$.  One easily checks that for $w\in W$, 
\begin{equation}\label{eq:fundchconj}
 \CC_{w(\G)}=w(\CC_{\G}) ,\qquad   \CC_{w(\G),w(\D)}=w(\CC_{\G,\D}).
 \end{equation} 
 The setwise stabiliser of $\CC_{\G,\D}$ in $W_{\G}$ coincides with 
 the stabiliser in $W_{\G}$ of any point of $\CC_{\G,\D}$, which is $W_{\D}$. Moreover,
 \begin{equation}\label{eq:facetclos} \CC_{\G}=\bdisjun_{\D\seq \G}\,\CC_{\G,\D},\qquad 
\ol{ \CC_{\G,\D}}=\bdisjun_{\D'\colon  \D\seq \D'\seq \G}\CC_{\G,\D'}. \end{equation}

 \begin{lemma}\label{lem:funcloslem} Let $\G,\G'$ be simple subsystems of $\Phi$ with
  $\Phi_{\G',+}\seq \Phi_{\G,+}$.  Then \begin{num}
 \item $\CC_{\G}\seq \CC_{\G'}$
 \item $\CC_{W_{\G}}^{(n)}\seq \CC_{W_{\G'}}^{(n)}$ where
 $\CC_{W_{\G}}^{(n)}$ and $\CC_{W_{\G'}}^{(n)}$ are  the 
 fundamental domains we have defined for $(W_{\G},S_{\G})$ and 
 $(W_{\G'},S_{\G'})$ respectively acting on $V^{n}$.\end{num}
 \end{lemma}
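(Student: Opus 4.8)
The plan is to obtain (a) by directly unwinding the definition of the fundamental chamber, and then to deduce (b) from (a) together with Corollary~\ref{cor:fund}, the point being to use a \emph{single} total order on $V$ adapted simultaneously to both simple systems.

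For (a): since $\Phi_{\G',+}\seq\Phi_{\G,+}$, the second description of the chamber in \eqref{eq:fundch} shows at once that any $v$ with $\mpair{v,\Phi_{\G,+}}\seq\real_{\geq 0}$ also satisfies $\mpair{v,\Phi_{\G',+}}\seq\real_{\geq 0}$, i.e. $\CC_\G\seq\CC_{\G'}$. I would also record here the consequence $W_{\G'}\seq W_\G$, which holds because $\G'\seq\Phi_{\G',+}\seq\Phi_{\G,+}\seq\Phi_\G$, so $s_\a\in W_\G$ for every $\a\in\G'$.

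For (b), the observation driving the argument is that a vector space total order $\leq$ on $V$ with $0\leq\Phi_{\G,+}$ automatically satisfies $0\leq\Phi_{\G',+}$ as well, precisely because $\Phi_{\G',+}\seq\Phi_{\G,+}$. So I would fix such an order (one exists by \S\ref{lexorder}, e.g. taking $\G$ as an initial segment of a basis of $V$), and let $\preceq$ be the induced lexicographic order on $V^n$. Applying Corollary~\ref{cor:fund} once to the Coxeter system $(W_\G,S_\G)$ and once to $(W_{\G'},S_{\G'})$ — both applications being legitimate, since $\preceq$ is compatible with each of the two positive systems — identifies $\CC_{W_\G}^{(n)}$ with the set of $\preceq$-maximal points of the $W_\G$-orbits on $V^n$ and $\CC_{W_{\G'}}^{(n)}$ with the set of $\preceq$-maximal points of the $W_{\G'}$-orbits. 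If now $\bv\in\CC_{W_\G}^{(n)}$, then $\bv=\max_\preceq(W_\G\bv)$; since $W_{\G'}\bv\seq W_\G\bv$ (using $W_{\G'}\seq W_\G$) and $\bv\in W_{\G'}\bv$, it follows that $\bv=\max_\preceq(W_{\G'}\bv)$, i.e. $\bv\in\CC_{W_{\G'}}^{(n)}$.

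I expect the only delicate point to be the coordination of the two orders: one must produce a single $\preceq$ that displays both $\CC_{W_\G}^{(n)}$ and $\CC_{W_{\G'}}^{(n)}$ as orbit maxima, and it is the hypothesis $\Phi_{\G',+}\seq\Phi_{\G,+}$, rather than merely $W_{\G'}\seq W_\G$, that makes this possible. (Should one prefer to avoid total orders, (b) can instead be proved by induction on $n$: with $H:=\Stab_{W_\G}(\tau_{n-1}\bv)$ and $H':=\Stab_{W_{\G'}}(\tau_{n-1}\bv)=H\cap W_{\G'}$, use Theorem~\ref{thm:fundreg}(a) to realise $H$ and $H'$ as standard parabolics with simple systems $\mset{\a\in\G\mid\mpair{\a,v_i}=0,\ i<n}$ and $\mset{\a\in\G'\mid\mpair{\a,v_i}=0,\ i<n}$, observe that these inherit the inclusion of positive systems from $\Phi_{\G',+}\seq\Phi_{\G,+}$ since the positive roots of a standard parabolic are its intersection with the ambient positive roots and $\Phi_{H'}\seq\Phi_H$, and then apply (a) to them to pass from $v_n\in\CC_{(W_\G)_{\bv,n-1}}$ to $v_n\in\CC_{(W_{\G'})_{\bv,n-1}}$.)
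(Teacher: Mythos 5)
Your proof is correct, and for part (b) it takes a genuinely different route from the paper. Part (a) is exactly the paper's argument: unwind the second description of the chamber in \eqref{eq:fundch}. For (b), the paper works directly from the definition of $\CC_{W_{\G}}^{(n)}$: it introduces the stabilisers $W'_{\bv,i}=\Stab_{W_{\G}}(\tau_{i}\bv)$ and $W''_{\bv,i}=\Stab_{W_{\G'}}(\tau_{i}\bv)=W_{\G'}\cap W'_{\bv,i}$, observes that their standard positive systems satisfy $\Phi_{W''_{\bv,i},+}=\Phi_{W''_{\bv,i}}\cap\Phi_{\G',+}\seq\Phi_{W'_{\bv,i}}\cap\Phi_{\G,+}=\Phi_{W'_{\bv,i},+}$, and applies (a) to each such pair to get $\CC_{W'_{\bv,i}}\seq\CC_{W''_{\bv,i}}$ coordinate by coordinate --- which is essentially your parenthetical alternative (no induction on $n$ is actually needed there). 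Your main argument instead characterises both regions as sets of lexicographic orbit maxima via Corollary \ref{cor:fund}, exploiting the fact that a single vector-space total order with $\Phi_{\G,+}$ positive automatically has $\Phi_{\G',+}$ positive, and that the maximum of $W_{\G}\bv$ is a fortiori the maximum of the subset $W_{\G'}\bv$ containing $\bv$. This is shorter and isolates cleanly why the hypothesis $\Phi_{\G',+}\seq\Phi_{\G,+}$, rather than merely $W_{\G'}\seq W_{\G}$, is what makes the lemma work; its only cost is that it relies on Corollary \ref{cor:fund} being applicable to an arbitrary reflection subgroup with its chosen simple system, which is legitimate (Theorem \ref{thm:fundreg} and Proposition \ref{prop:lexorder} hold for any finite root system with a chosen simple system) but deserves an explicit remark.
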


\begin{proof}  For (a), observe that
\begin{equation*}
\CC_{\G}=\mset{v\in V\mid \mpair{v,\Phi_{\G,+}}\seq \real_{\geq 0}}\seq 
\mset{v\in V\mid \mpair{v,\Phi_{\G',+}}\seq \real_{\geq 0}}=\CC_{\G'}.
\end{equation*}

 To prove (b), let $\bv\in V^{n}$. Let $W'=W_{\G}$ and $W'':=W_{\G'}$. Recall that   $W_{\bv,i}=\Stab_{W}(\tau_{i}(v))$. Similarly define 
 $W'_{\bv,i}=\Stab_{W'}(\tau_{i}(v))=W'\cap W_{\bv,i}$
and $W''_{\bv,i}=\Stab_{W''}(\tau_{i}(v))=W''\cap W'_{\bv,i}\seq W'_{\bv,i}$; 
they are parabolic subgroups of  $(W_{\G},S_{\G})$ and of $(W_{\G'}, S_{\G'})$ 
respectively with standard  positive systems
\begin{equation*}\Phi_{W''_{\bv,i},+}=\Phi_{W''_{\bv,i}}\cap \Phi_{\G',+}\seq 
\Phi_{W'_{\bv,i}}\cap \Phi_{\G,+} =\Phi_{W'_{\bv,i},+}\end{equation*} 
by the assumption that  $\Phi_{\G',+}\seq \Phi_{\G,+}$. Hence by (a), 
$\CC_{W'_{\bv,i}}\seq  \CC_{W''_{\bv,i}}$ for all $i$. If $\bv\in \CC_{W'}^{(n)}$, then for all $i=1,\ldots,n$,  $v_{i}\in\CC_{W'_{\bv,i-1}}
\seq  \CC_{W''_{\bv,i-1}}$ and so $\bv\in \CC_{W''}^{(n)}$ by definition. 
\end{proof}

\subsection{The main lemma}\label{ss:facclos} 
We now prove the main technical lemma required for the proof 
of Theorem \ref{thm:strat}(f).  
\begin{lemma}\label{lem:facclos}
Let $\G,\G'$ be simple subsystems of $\Phi$ with
  $W_{\G'}\seq W_{\G}$.\begin{num}
 \item $\CC_{\G'}= \bigcup_{w\in\tensor*[^{\G'}]{W}{_{\G}}
 }w(\CC_{\G})$.
 \item If $\D'\seq \G'$, then \begin{equation*}
\ol{\CC_{\G',\D'}}=\bigcup_{w\in\tensor*[^{\G'}]{W}{_{\G}}}\,\,
\bigcup_{\substack{\D\seq \G\\ W_{w(\D)}\sreq W_{\D'}}}w(\CC_{\G,\D}).
\end{equation*}
 \end{num}
 \end{lemma}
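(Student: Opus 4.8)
The plan is to reduce both parts to the facet theory of \S\ref{ss:facets}--\S\ref{ss:funcloslem} together with the coset-representative facts of \S\ref{ss:cosrep}, the crux being a comparison of point stabilisers in $W_{\G}$ and in $W_{\G'}$.

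For part (a), I would first verify the inclusion $\bigcup_{w\in\tensor*[^{\G'}]{W}{_{\G}}}w(\CC_{\G})\seq\CC_{\G'}$: if $w\in\tensor*[^{\G'}]{W}{_{\G}}$ and $u\in\CC_{\G}$, then for $\a\in\G'$ one has $\mpair{w(u),\a}=\mpair{u,w^{-1}(\a)}$, and $w^{-1}(\a)\in w^{-1}(\G')\seq\Phi_{W_{\G},+}\seq\real_{\geq0}\G$ by the definition \eqref{eq:cosrep2} of $\tensor*[^{\G'}]{W}{_{\G}}$, so this is $\geq0$ because $\mpair{u,\G}\seq\real_{\geq0}$; hence $w(u)\in\CC_{\G'}$ by \eqref{eq:fundch}. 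For the reverse inclusion, $\CC_{\G}$ is a fundamental domain for $W_{\G}$ on $V$, so $V=\bigcup_{x\in W_{\G}}x(\CC_{\G})$; writing each $x\in W_{\G}$ as $x=yw$ with $y\in W_{\G'}$ and $w\in\tensor*[^{\G'}]{W}{_{\G}}$ (possible since the latter is a transversal of $W_{\G'}\backslash W_{\G}$, which holds in general by \S\ref{ss:cosrep}), we get $V=\bigcup_{y\in W_{\G'}}y(D)$ with $D:=\bigcup_{w\in\tensor*[^{\G'}]{W}{_{\G}}}w(\CC_{\G})$. Now if $v\in\CC_{\G'}$ then $v\in y(D)$ for some $y\in W_{\G'}$, so $y^{-1}(v)\in D\seq\CC_{\G'}$; since $v$ and $y^{-1}(v)$ lie in the same $W_{\G'}$-orbit and both lie in $\CC_{\G'}$, Lemma \ref{lem:fundcham}(a) applied to $W_{\G'}$ forces $v=y^{-1}(v)\in D$. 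Hence $\CC_{\G'}=D$.

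For part (b) I would record: $\stab_{W_{\G'}}(v)=W_{\G'}\cap\stab_{W_{\G}}(v)$ for all $v\in V$ (as $W_{\G'}\seq W_{\G}$); any point of $w(\CC_{\G,\D})=\CC_{w(\G),w(\D)}$ has $W_{\G}$-stabiliser $W_{w(\D)}$ (by \eqref{eq:fundchconj} and \S\ref{ss:funcloslem}, noting $W_{w(\G)}=wW_{\G}w^{-1}=W_{\G}$ since $w\in W_{\G}$); any point of $\CC_{\G',\D''}$ has $W_{\G'}$-stabiliser $W_{\D''}$; and for $E,F\seq\G'$ one has $W_E\seq W_F\iff E\seq F$ (the forward implication from $E=\Phi_{E}\cap\G'\seq\Phi_{F}\cap\G'=F$). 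Given these, for the inclusion $\sreq$: if $v\in w(\CC_{\G,\D})$ with $w\in\tensor*[^{\G'}]{W}{_{\G}}$, $\D\seq\G$ and $W_{w(\D)}\sreq W_{\D'}$, then $v\in w(\CC_{\G})\seq\CC_{\G'}$ by (a), so $v\in\CC_{\G',\D''}$ for a unique $\D''\seq\G'$ by \eqref{eq:facetclos}; then $W_{\D'}\seq W_{\G'}\cap W_{w(\D)}=\stab_{W_{\G'}}(v)=W_{\D''}$, whence $\D'\seq\D''$ and $v\in\CC_{\G',\D''}\seq\ol{\CC_{\G',\D'}}$ by \eqref{eq:facetclos}. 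For $\seq$: if $v\in\ol{\CC_{\G',\D'}}$, then $v\in\CC_{\G',\D''}$ for a unique $\D''$ with $\D'\seq\D''\seq\G'$ by \eqref{eq:facetclos}; in particular $v\in\CC_{\G'}$, so by (a) $v\in w(\CC_{\G})$ for some $w\in\tensor*[^{\G'}]{W}{_{\G}}$, hence $v\in w(\CC_{\G,\D})$ for a unique $\D\seq\G$; then $W_{\D'}\seq W_{\D''}=\stab_{W_{\G'}}(v)=W_{\G'}\cap W_{w(\D)}\seq W_{w(\D)}$, so $(w,\D)$ is an admissible index in the right hand union and $v$ lies in it.

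The one point that needs care is the general case $\Phi_{\G',+}\not\seq\Phi_{\G,+}$: here one must invoke the versions of the coset-representative statements of \S\ref{ss:cosrep} that do not assume $\Phi_{\G',+}\seq\Phi_{\G,+}$ (in particular that $\tensor*[^{\G'}]{W}{_{\G}}$ is still a transversal of $W_{\G'}\backslash W_{\G}$), and one must keep in mind that the sets $w(\CC_{\G,\D})$ with $w\in\tensor*[^{\G'}]{W}{_{\G}}\seq W_{\G}$ are genuine facets of $W_{\G}$ itself, since $W_{w(\G)}=W_{\G}$. Once this bookkeeping is in place the argument is entirely formal: the substance is the identity $\stab_{W_{\G'}}(v)=W_{\G'}\cap\stab_{W_{\G}}(v)$ and the description of facets as the loci of constant stabiliser inside the corresponding fundamental chamber.
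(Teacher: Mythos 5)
Your proof is correct, and it diverges from the paper's in an interesting way on part (a). For the inclusion $\bigcup_{w}w(\CC_{\G})\seq\CC_{\G'}$ you compute $\mpair{w(u),\a}=\mpair{u,w^{-1}(\a)}\geq 0$ directly from the definition \eqref{eq:cosrep2} of $\tensor*[^{\G'}]{W}{_{\G}}$, where the paper instead quotes Lemma \ref{lem:funcloslem}(a); both are fine. For the reverse inclusion the paper first reduces to the case $\Phi_{\G',+}\seq\Phi_{\G,+}$ (via the conjugation identities \eqref{eq:fundchconj} and \eqref{eq:cosrep2a}) and then runs a minimal-length argument, writing $v=w(v')$ with $l_{\G}(w)$ minimal and checking $l_{\G}(s_{\a}w)\geq l_{\G}(w)$ for $\a\in\G'$ case by case. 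You avoid both the reduction and the length computation: you factor $W_{\G}=W_{\G'}\cdot\tensor*[^{\G'}]{W}{_{\G}}$ using the transversal property (which the paper does establish in \S\ref{ss:cosrep} without the positivity hypothesis), cover $V$ by $W_{\G'}$-translates of $D=\bigcup_{w}w(\CC_{\G})$, and invoke uniqueness of $W_{\G'}$-orbit representatives in $\CC_{\G'}$. This is more elementary (no exchange condition, no lengths) and works uniformly in the general case, at the cost of leaning on the forward inclusion $D\seq\CC_{\G'}$ to set up the uniqueness argument — which you have already proved, so there is no circularity. For part (b) your argument is essentially the paper's: the paper phrases both inclusions through orthogonality of $v$ to $\D$, $\D'$ and the identification $\stab_{W_{\G}}(v')=W_{\D}$ for $v'\in\CC_{\G,\D}$, while you package the same content into the identity $\stab_{W_{\G'}}(v)=W_{\G'}\cap\stab_{W_{\G}}(v)$ together with the (correct, and worth recording) fact that $W_{E}\seq W_{F}\iff E\seq F$ for $E,F\seq\G'$; the net effect is the same bijection between the facet $\CC_{\G',\D''}$ containing $v$ and the admissible pairs $(w,\D)$.
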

\begin{proof} Suppose that (a) holds for  $\G$ and $\G'$. 
Then it also holds for $\G$ and $u(\G')$ for any $u\in W_{\G}$. For   by
\eqref{eq:fundchconj}  and  \eqref{eq:cosrep2a}, one would have 
\begin{equation*}
\CC_{u(\G')}=u(\CC_{\G'})=u\Bigl(\bigcup_{w\in\tensor*[^{\G'}]{W}{_{\G}}
 }w(\CC_{\G})\Bigr)=\bigcup_{w\in\tensor*[^{\G'}]{W}{_{\G}}
 }uw(\CC_{\G})=\bigcup_{w'\in\tensor*[^{u(\G')}]{W}{_{\G}}
 }w'(\CC_{\G}).
\end{equation*} A similar argument shows that if (b) is true for
$\G$,  $\D'$ and  $\G'$, it is true for $\G$,  $u(\D')$ and $u(\G')$ for any
$u\in W_{\G}$.  Since there is $u\in W_{\G}$ with $u(\G')\seq \Phi_{\G,+}$,  
we may and do assume for the proofs of (a)--(b) that $\Phi_{\G',+}\seq \Phi_{\G,+}$.  

To prove (a), note that if $ w\in\tensor*{W}{^{\G'}_{\G}}$, 
then  \eqref{eq:fundchconj} and Lemma \ref{lem:funcloslem}(a) imply that 
$w(\CC_{\G'})=\CC_{w(\G')}\sreq \CC_{\G}$. Hence 
$\cup_{w\in \tensor*[^{\G'}]{W}{_{\G}}}w(\CC_{\G})=\cup_{w\in\tensor*{W}{^{\G'}_{\G}}}w^{-1}(\CC_{\G})\seq \CC_{\G'}$.
To prove the reverse inclusion, let $v\in \CC_{\G'}$.  Write 
$v=w(v')$ where $v'\in \CC_{\G}$ and  $w\in W_{\G}$ is of minimal length  $l_{\G}(w)$. By \eqref{eq:cosrep1b}--\eqref{eq:cosrep2}, it  will suffice to show that 
$l_{\G}(s_{\a}w)\geq l_{\G}(w)$ for all $\a\in \G'$. Suppose first that $\mpair{\a,v}=0$.
Then $v=s_{a}(v)=(s_{\a}w)(v')$ with $s_{\a}w\in W_{\G}$. By choice of $w$, 
$l_{\G}(s_{\a}w)\geq l_{\G}(w)$. On the other  hand,
suppose $\mpair{\a,v}\neq 0$. Since $v\in \CC_{\G'}$, this forces
$0<\mpair{v,\a}=\mpair{w(v'),\a}=\mpair{v',w^{-1}(\a)}$. Since $v'\in \CC_{\G}$ and $w^{-1}(\a)\in \Phi_{W_{\G}}$, it follows that
$w^{-1}(\a)\in \Phi_{W_{\G},+}$ and so $l_{\G}(s_{\a}w)>l_{\G}(w)$ as required.

 Now  we prove (b).  Let $w\in\tensor*[^{\G'}]{W}{_{\G}}$,
  $\D\seq \G$ with $ W_{w(\D)}\sreq W_{\D'}$. Let  
   $v\in \CC_{\G,\D}
 $. Then $v\in \CC_{\G}$, so by (a), $w(v)\in \CC_{\G'}$.
 Since $\mpair{v,\D}=0$, it follows that $\mpair{w(v),w(\D)}=0$ and
  therefore $\mpair{w(v),\D'}=0$ since  $ W_{w(\D)}\sreq W_{\D'}$ .
  Hence $w(v)\in \ol{\CC_{\G',\D'}}$ by \eqref{eq:facetclos}. Thus the right hand side of 
  (b) is included in the left hand side.
  For the reverse inclusion, let  $v\in \ol{\CC_{\G',\D'}}\seq \CC_{\G'}$. By (a),  there 
  exists $w\in\tensor*[^{\G'}]{W}{_{\G}}$ with $v':=w^{-1}(v)\in
   \CC_{\G}$. Thus, $v'\in\CC_{\G,\D}$ for some 
   $\D\seq \G$.
  It remains to prove that $  W_{w^{-1}(\D')}\seq W_{\D}$.
  Let $\a\in \D'\seq \G'$. Since $w\in\tensor*[^{\G'}]{W}{_{\G}}$, it follows that $w^{-1}(\a)\in \Phi_{W_{\G},+}$.  
   Note that 
   \begin{equation*}
  0=\mpair{\a,v}=\mpair{w^{-1}(\a), v'}. 
  \end{equation*} 
  Since $v'\in\CC_{\G,\D}$, one has $s_{w^{-1}(\a)}\in \stab_{W_{\G}}(v')=W_{\D}$. Therefore 
    $  W_{w^{-1}(\D')}\seq W_{\D}$
since the elements  $s_{w^{-1}(\a)}$ for $\a\in \D'$ generate the left hand side. 
This completes the proof of (b).  
\end{proof}
 \begin{remark} One may show that in the union in (b),  $\tensor*[^{\G'}]{W}{_{\G}}$
 may be replaced by $\tensor*[^{\G'}]{W}{_{\G}^{\Delta}}:=\tensor*[^{\G'}]{W}{_{\G}}\cap 
 \tensor*[^{}]{W}{_{\G}^{\Delta}}$, which is a  set of $(W_{\G'}, W_{\D})$ 
 double coset representatives in
 $W_{\G'}\backslash W_{\G}/W_{\D}$, and is the set of all double coset
 representatives which are of   minimal (and minimum) length in their double coset if $\Phi_{\G',+}\seq \Phi_{\G,+}$. (This uses the fact that
 standard facts on  double coset representatives with respect to standard 
 parabolic subgroups on both sides   generalise  to double coset representatives  with respect to an arbitrary reflection subgroup 
 on one side and a standard parabolic subgroup on the other side.)
  After this replacement, the union in (b) is one 
 of pairwise disjoint facets.
 This leads to a similar refinement in \eqref{eq:stratcl}.
  \end{remark}

%%%%%%%%%%%%%%%%%%%%%%%%%%%%%%%%%%%%%%%%%%%%%%%%%%%%%%%%%%%%%%%%%%%%%%%
%%%%%%%%%%%%%%%%%%%%%%%
\ssect{Proof of Theorem \ref{thm:strat}(f)}\label{stratproof}
For a simple subsystem $\G$ of $\Phi$,
and  $n\in \Nat$, let 
\begin{equation}
 \CI_\Gamma=\CI_{\G}^{(n)}:=\mset{\bsy{\G}=(\G_{0},\ldots, \G_{n})\mid 
 \G=\G_{0}\sreq \G _{1}\sreq  \ldots \sreq \G_{n}}
\end{equation}  
denote the set of all weakly decreasing  sequences $\bsy{\G}$ of  $n+1$ 
subsets of $\G$ with $\G$ as first term.  For $\bsy{\G}=(\G_{0},\ldots, \G_{n})\in \CI_\G$, 
define 
\begin{equation}
\CX_{\bsy{\G}}:=\CC_{\G_{0},\G_{1}}\times \CC_{\G_{1},\G_{2}}\times \ldots \times 
\CC_{\G_{n-1},\G_{n}}\seq V^{n}.
\end{equation}
The sets $w(\CX_{\bsy{\G}})$  for $\bsy{\G}\in \CI_{\G}^{(n)}$ and $w\in W_{\G}$ 
are  the  $(W_{\G},S_{\G})$-strata of $V^{n}$.  
If $\bI=(I_{0},\ldots, I_{n})\in \CI^{(n)}$, then $\bsy{\G}:=(\Pi_{I_{0}},\ldots, \Pi_{I_{n}})\in\CI_{\Pi}^{(n)}$ 
and $\CX_{\bI}=\CX_{\bsy{\G}}$.
It is easy to see that the collection of strata of $V^{n}$ with respect to $(W_{\G},S_{\G})$ 
depends only on the reflection subgroup $W_{\G}$ and not on the chosen simple system 
$\G$. 

There is a left action of $W$ on $\cup_{\G}\CI_{\G}^{(n)}$, where the union is over 
simple subsystems $\G$ of $\Phi$,  defined as follows: 
for $w\in W$ and $\bsy{\G}=(\G_{0},\ldots, \G_{n})\in\CI_{\G}^{(n)}$, one has 
$w(\bsy{\G}):=(w(\G_{0}), \ldots, w(\G_{n}))\in \CI_{w(\G)}^{(n)}$ .  
By \eqref{eq:fundchconj}, this action satisfies
\begin{equation}\label{eq:stratact}
w(\CX_{\bsy{\G}})=\CX_{w(\bsy{\G})}, \text{ \rm for all $w\in W$}.
\end{equation}     The setwise stabiliser of $\CX_{\bsy{\G}}$ in $W_{\G}$ is 
equal to the stabiliser in $W_{\G}$ of any  point of $\CX_{\bsy{\G}}$, which is $W_{\G_{n}}$.

 Theorem \ref{thm:strat}(f)  follows from the special case $\G=\L=\Pi$ 
 and  $W=W_{\G}=W_{\L}$ of the following (superficially stronger but actually equivalent) result, which has a simpler inductive proof because of the greater generality of its hypotheses.

 \begin{theorem}\label{thm:stratproof} Let $\G$ and $\L$ be simple subsystems of $\Phi$ 
 with $W_{\G}\seq W_{\L}$.   Then for all $n\in \Nat_{\geq 1}$,  the 
 closure of any $(W_{\G},S_{\G})$-stratum
 $F'$ of $V^{n}$ is a union of  $(W_{\L},S_{\L})$-strata $F$  of $V^{n}$.
  \end{theorem}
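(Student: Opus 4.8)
The plan is to prove the (formally more general) Theorem~\ref{thm:stratproof} by induction on $n$, the base case $n=1$ being essentially Lemma~\ref{lem:facclos}(b). First I would reduce to strata in ``standard form'': any $(W_\G,S_\G)$-stratum of $V^n$ is $w(\CX_{\bsy{\G}})$ for some $w\in W_\G$ and some $\bsy{\G}=(\G_0,\dots,\G_n)\in\CI_\G^{(n)}$ with $\G_0=\G$, and by \eqref{eq:stratact} this equals $\CX_{w(\bsy{\G})}$ with $w(\bsy{\G})\in\CI_{w(\G)}^{(n)}$; since $w\in W_\G\seq W_\L$ we have $W_{w(\G)}=wW_\G w^{-1}\seq W_\L$, so replacing $\G$ by $w(\G)$ and $\bsy{\G}$ by $w(\bsy{\G})$ we may assume $w=1$. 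For $n=1$ one has $\CX_{\bsy{\G}}=\CC_{\G,\G_1}$, and Lemma~\ref{lem:facclos}(b), applied with $W_\G\seq W_\L$ in the roles of $W_{\G'}\seq W_\G$ there, writes $\ol{\CC_{\G,\G_1}}$ as a union of sets $u(\CC_{\L,\D_0})$ with $u\in\tensor*[^{\G}]{W}{_{\L}}$ and $\D_0\seq\L$, each of which is a $(W_\L,S_\L)$-stratum of $V$.

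For the inductive step, let $n\ge 2$ and $F'=\CX_{\bsy{\G}}$ as above, and split off the first coordinate: $\CX_{\bsy{\G}}=\CC_{\G,\G_1}\times\CX_{\bsy{\G}'}$, where $\bsy{\G}':=(\G_1,\dots,\G_n)\in\CI_{\G_1}^{(n-1)}$, so that $\CX_{\bsy{\G}'}$ is a $(W_{\G_1},S_{\G_1})$-stratum of $V^{n-1}$ and $W_{\G_1}\seq W_\G\seq W_\L$. Since closure commutes with finite products, $\ol{F'}=\ol{\CC_{\G,\G_1}}\times\ol{\CX_{\bsy{\G}'}}$, and by Lemma~\ref{lem:facclos}(b),
\[
\ol{\CC_{\G,\G_1}}=\bigcup_{u\in\tensor*[^{\G}]{W}{_{\L}}}\ \bigcup_{\substack{\D\seq\L\\ W_{u(\D)}\sreq W_{\G_1}}}u(\CC_{\L,\D}).
\]
The key observation is that the indexing condition $W_{u(\D)}\sreq W_{\G_1}$ is exactly the compatibility needed to feed the tail into the inductive hypothesis: fixing such a pair $(u,\D)$, from $W_{\G_1}\seq W_{u(\D)}=uW_\D u^{-1}$ we get $W_{u^{-1}(\G_1)}=u^{-1}W_{\G_1}u\seq W_\D$, with $u^{-1}(\G_1)$ and $\D$ simple subsystems of $\Phi$. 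The inductive hypothesis, applied to the $(W_{u^{-1}(\G_1)},S_{u^{-1}(\G_1)})$-stratum $u^{-1}(\CX_{\bsy{\G}'})=\CX_{u^{-1}(\bsy{\G}')}$ of $V^{n-1}$ and the containment $W_{u^{-1}(\G_1)}\seq W_\D$, shows that its closure is a union of $(W_\D,S_\D)$-strata of $V^{n-1}$; applying $u$,
\[
\ol{\CX_{\bsy{\G}'}}=u\bigl(\ol{\CX_{u^{-1}(\bsy{\G}')}}\bigr)=\bigcup_i u y_i(\CX_{\bsy{\D}^{(i)}}),\qquad y_i\in W_\D,\ \ \bsy{\D}^{(i)}=(\D,\D_1^{(i)},\dots,\D_{n-1}^{(i)})\in\CI_\D^{(n-1)}
\]
(the decomposition of $\ol{\CX_{\bsy{\G}'}}$ depends on $(u,\D)$, which is harmless).

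It then remains to reassemble. For each pair $(u,\D)$ and each $i$, the corresponding piece of $\ol{F'}=\ol{\CC_{\G,\G_1}}\times\ol{\CX_{\bsy{\G}'}}$ is
\[
u(\CC_{\L,\D})\times u y_i(\CX_{\bsy{\D}^{(i)}})=u y_i\bigl(\CC_{\L,\D}\times\CX_{\bsy{\D}^{(i)}}\bigr)=u y_i(\CX_{\bsy{\L}^{(i)}}),
\]
where the first equality holds because $y_i\in W_\D$, which is the pointwise stabiliser of $\CC_{\L,\D}$ in $W_\L$, so fixes $\CC_{\L,\D}$; and where $\bsy{\L}^{(i)}:=(\L,\D,\D_1^{(i)},\dots,\D_{n-1}^{(i)})\in\CI_\L^{(n)}$, which is legitimate since $\D\seq\L$. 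As $u y_i\in W_\L$, this is a $(W_\L,S_\L)$-stratum of $V^n$. Running over all $(u,\D)$ and all $i$ exhibits $\ol{F'}$ as a union of $(W_\L,S_\L)$-strata, completing the induction; Theorem~\ref{thm:strat}(f) is then the case $\G=\L=\Pi$.

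I expect the main obstacle to be precisely the bookkeeping behind this ``key observation'': organising the induction so that the reflection subgroup governing the tail stratum (a conjugate of $W_{\G_1}$) is contained in the residual reflection subgroup $W_{u(\D)}$ attached to the chosen first-coordinate facet $u(\CC_{\L,\D})$. This is exactly why the statement is phrased for arbitrary pairs $W_\G\seq W_\L$ of reflection subgroups, and not merely for $W=W_\G=W_\L$: the inductive step unavoidably produces fresh instances $W_{u^{-1}(\G_1)}\seq W_\D$ in which neither group need be parabolic in the original $W$, so without this generality the induction does not close. The remaining ingredients --- compatibility of closure with finite products, the conjugation identities \eqref{eq:fundchconj} and \eqref{eq:stratact}, and the identification of the pointwise stabiliser of $\CC_{\L,\D}$ in $W_\L$ with $W_\D$ --- are routine.
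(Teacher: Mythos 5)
Your proof is correct. It rests on the same two pillars as the paper's argument --- Lemma \ref{lem:facclos}(b) together with induction on $n$ via the product structure of strata and the conjugation identities \eqref{eq:fundchconj}, \eqref{eq:stratact} --- but it runs the induction in the opposite direction: you peel off the \emph{first} coordinate, decompose $\ol{\CC_{\G,\G_{1}}}$ by Lemma \ref{lem:facclos}(b), and then feed the tail into the inductive hypothesis for the \emph{fresh} pair $W_{u^{-1}(\G_{1})}\seq W_{\D}$, whereas the paper peels off the \emph{last} coordinate, applies the inductive hypothesis to the truncation for the \emph{same} pair $(\G,\L)$, and only then invokes Lemma \ref{lem:facclos}(b) on the final factor. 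Your diagnosis of why the statement must be formulated for arbitrary pairs $W_{\G}\seq W_{\L}$ is accurate for your argument (your inductive step genuinely manufactures new, non-conjugate pairs), though it is worth noting that the paper's induction keeps $(\G,\L)$ fixed and uses the extra generality only for the initial reduction to $u=1$ and inside the lemma. The trade-off between the two organisations: the paper's version yields the explicit closed formula \eqref{eq:stratcl}, with its conditions $P(\bw,\bsy{\L})$, which is needed again in \S\ref{s:minmax} to give the purely combinatorial description of the poset $\ol{\CF}^{(n)}$; your version avoids that bookkeeping entirely but delivers only the qualitative union statement, which suffices for Theorem \ref{thm:strat}(f). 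The individual steps you flag as routine --- closure commuting with finite products, $W_{\D}$ fixing $\CC_{\L,\D}$ pointwise so that $u(\CC_{\L,\D})\times uy_{i}(\CX_{\bsy{\D}^{(i)}})=uy_{i}(\CX_{\bsy{\L}^{(i)}})$, and the verification that $(\L,\D,\D_{1}^{(i)},\ldots,\D_{n-1}^{(i)})\in\CI_{\L}^{(n)}$ --- all check out.
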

 \begin{proof}  A typical stratum $F'$ of $V^{n}$ for $W_{\G}$ is, by the definitions 
 and  \eqref{eq:stratact}, of the form $F'=u(\CX_{\bsy{\G}} )=\CX_{u(\bsy{\G})}$ for 
 some $u\in W_{\L}$ and $\bsy{\G}\in \CI^{(n)}_{\G}$. Replacing $\G$ by $u(\G)$, we 
 may assume without loss of generality  that $u=1$. 
   It will therefore  suffice to establish   the following formula:
 for $\bsy{\G}=(\G=\G_{0},\ldots, \G_{n})\in \CI^{(n)}_{\G}$: 
 \begin{equation}\label{eq:stratcl}
 \ol{\CX_{\bsy{\G}}}=\bigcup_{\substack{{\bw\in W^{n}}\\
 {\bsy{\L}\in \CI^{(n)}_{\L}} \\
 P(\bw,\bsy{\L})}}w_{n}\cdots w_{1}(\CX_{\bsy{\L}}).
 \end{equation}
 The union in \eqref{eq:stratcl} is taken over certain sequences 
 $\bw=(w_{1},\ldots, w_{n})\in W^{n}$ and 
 ${\bsy{\L}=(\L=\L_{0},\ldots, \L_{n})\in \CI^{(n)}_{\L}}$ satisfying the  conditions 
 $P(\bw,\bsy{\L})$(i)--(ii) below
 \begin{conds}
 \item For $i=1,\ldots, n$, $w_{i}\in \tensor*[^{\G_{i-1}}]{W}{_{w_{i-1}\cdots w_{1}(\L_{i-1})}}$.
%  (in particular, $\G_{i-1}\seq \Phi_{w_{i}\cdots w_{1}(\L_{i-1}),+}$ i.e. $\G_{i-1}\seq w_{i}\cdots w_{1}( \Phi_{\L_{i-1},+}))$. 
 \item For  $i=1,\ldots, n$, $W_{w_{i}\cdots w_{1}(\L_{i})}\sreq W_{\G_{i}}$.
 \end{conds}
 For fixed $P(\bw,\bsy{\L})$, we denote these conditions as (i)--(ii).
 Note that the condition (ii) with $i=n$ implies that for $F':=\CX_{\bsy{\G}}$
 and $F:=w_{n}\cdots w_{1}(\CX_{\bsy{\L}})$, one has 
 $\stab_{W_{\L}}(F)=W_{w_{n}\cdots w_{1}(\L_{n})}\sreq W_{\G_{n}}=  \stab_{W_{\G}}(F')$.

 We shall prove \eqref{eq:stratcl} (and that the conditions (i)--(ii) make sense)
 by induction on $n$. If $n=1$, then \eqref{eq:stratcl} reduces to Lemma \ref{lem:facclos}(b).  
 Now assume by way of induction that \eqref{eq:stratcl} holds  and consider
 $\G'=(\G,\G_{n+1})=(\G_{0}, \ldots, \G_{n},\G_{n+1})\in \CI^{(n+1)}_{\G}$.
 Then 
  \begin{equation*}
 \ol{\CC_{\bsy{\G'}}}=\ol{\CX_{\bsy{\G}}}\times\ol{\CC_{\G_{n},\G_{n+1}}} =
 \bigcup_{\substack{{\bw\in W^{n}}\\
 {\bsy{\L}\in \CI^{(n)}_{\L}} \\
 P(\bw,\bsy{\L})}}w_{n}\cdots w_{1}(\CX_{\bsy{\L}})\times\ol{\CC_{\G_{n},\G_{n+1}}}
 \end{equation*}
Fix $\bw\in W^{n}$ and $\bsy{\L}\in \CI_{\L}^{(n)}$ satisfying $P(\bw,\bsy{\L})$ 
and write $w:=w_{n}\cdots w_{1}$. Then  since $W_{w(\L_{n})}\sreq  W_{\G_{n}}$, 
Lemma \ref{lem:facclos}(b) gives
 \begin{equation*}
w_{n}\cdots w_{1}(\CX_{\bsy{\L}})\times\ol{\CC_{\G_{n},\G_{n+1}}}
=\CX_{w(\bsy{\L})}\times\ol{\CC_{\G_{n},\G_{n+1}}}
=\CX_{w(\bsy{\L})}\times \bigcup_{w',\Sigma }w'(\CC_{w(\L_{n}),\Sigma})
 \end{equation*}
 where the union is over all $w'\in \tensor*[^{\G_{n}}]{W}{_{w(\L_{n})}}$ and 
 $\Sigma\seq w(\L_{n})$ with      
 $W_{w'(\Sigma)}\sreq W_{\G_{n+1}}$. Writing
 $w'=w_{n+1}$ and $\Sigma=w(\L_{n+1})$ gives  \begin{equation*} \CX_{w(\bsy{\L})}\times 
 \bigcup_{w',\Sigma }w'(\CC_{w(\L_{n}),\Sigma})=
\bigcup_{w_{n+1},\L_{n+1}}\CX_{w(\bsy{\L})}\times w_{n+1}(\CC_{w(\L_{n}),w(\L_{n+1})})
 \end{equation*} where the union on the right is  taken over all 
 $w_{n+1}\in  \tensor*[^{\G_{n}}]{W}{_{w(\L_{n})}}$ and $ \L_{n+1}\seq\L_{n}$ with $W_{w_{n+1}w(\L_{n+1})}\sreq W_{\G_{n+1}
}$.  Since $w_{n+1}\in W_{w(\L_{n})}=\stab_{W_{\L}}(\CX_{w(\bsy{\L})}) $,  it follows  
using \eqref{eq:stratact} and \eqref{eq:fundchconj}   that
 \begin{equation*}\begin{split}
\CX_{w(\bsy{\L})}\times w_{n+1}(\CC_{w(\L_{n}),w(\L_{n+1})})&=w_{n+1}(\CX_{w(\bsy{\L})})\times
w_{n+1}(\CC_{w(\L_{n}),w(\L_{n+1})})\\ &=
w_{n+1}w(\CX_{\bsy{\L}}\times \CC_{\L_{n},\L_{n+1}})=
w_{n+1}w(\CX_{\bsy{\L'}} )  \end{split} \end{equation*}
 where $\bsy{\L'}:=(\L_{0},\ldots, \L_{n+1})\in
\CI^{(n+1)}_{\L}$. Observe that the conditions imposed on $w_{n+1},\L_{n+1}$ in the last 
union are precisely those which ensure  that
$w_{n+1}$, $\L_{n+1}$ satisfy the conditions on $w_{i},\L_{i}$ in (i), (ii) with $i=n+1$,
and that $\L':=(\L_{0},\ldots, \L_{n+1})\in \CI^{(n+1)}_{\L}$. 
Combining the last four displayed equations with this observation establishes the validity
of \eqref{eq:stratcl} with $n$ replaced by $n+1$. This completes the inductive step, 
and the proof of
Theorem \ref{thm:stratproof}.
\end{proof}
The proof of Theorem \ref{thm:strat} is now complete.

 \ssect{Geometry} \label{ss:facinc}  Define the dimension $\dim(C)$ of any non-empty cone $C$
 in a finite-dimensional real  vector space by $\dim(C)=\dim_\real(\real C)$, where
  $\real C$ is the subspace spanned by $C$.  
  It is well-known that the dimension of any cone contained in $\ol{C}\sm C$ is 
  strictly smaller than the dimension of $C$.
 
 \begin{corollary}\label{cor:facinc} Maintain the hypotheses of 
 \text{\rm  Theorem \ref{thm:stratproof}}, so that in particular $W_{\G}\seq W_{\L}$. 
 \begin{num}
 \item The closure of any $(W_{\G},S_{\G})$-facet $F$ of $V^{n}$ is  the disjoint 
 union of  $F$ and of $(W_{\G},S_{\G})$-strata of $V^{n}$ whose dimension
 is strictly less than  $\dim(F)$.
\item Any $(W_{\G},S_{\G})$-stratum $F'$ of $V^{n}$ is a union of  
certain $(W_{\L},S_{\L})$-strata $F$ of $V^{n}$.   
 \end{num}  \end{corollary}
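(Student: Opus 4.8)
The plan is to deduce both parts from Theorem \ref{thm:stratproof} together with the elementary facts about cones collected in \S\ref{ss:facinc} and the structure of strata established in Theorem \ref{thm:strat}. For part (a), I would first apply Theorem \ref{thm:stratproof} in the special case $\L=\G$ (which is the content of Theorem \ref{thm:strat}(f) by the reduction already carried out in \S\ref{stratproof}): the closure $\ol{F}$ of any $(W_\G,S_\G)$-stratum $F$ is a disjoint union of $(W_\G,S_\G)$-strata, one of which is $F$ itself since $F\seq\ol F$ and $F$ is a stratum. The remaining strata in this union are contained in $\ol F\sm F$, which is the frontier of $F$. Since each stratum is by definition a locally closed cone, and since a locally closed set is open in its closure, no stratum in $\ol F\sm F$ can have the same dimension as $F$ — here I invoke the stated fact from \S\ref{ss:facinc} that any cone contained in $\ol C\sm C$ has dimension strictly less than $\dim(C)$, applied with $C=F$. (One should check that a stratum $G\seq\ol F\sm F$ is indeed contained in $\ol F\sm \,$(the relative interior), but since strata are disjoint and $G\ne F$, $G\cap F=\eset$, so $G\seq\ol F\sm F$.) This gives (a).

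For part (b), the statement is that any $(W_\G,S_\G)$-stratum $F'$ is a \emph{union} of $(W_\L,S_\L)$-strata. This does not follow from closures; rather it is the assertion that the coarser stratification (by $W_\L$) refines the finer one (by $W_\G$) — i.e. that each $W_\G$-stratum is partitioned by the $W_\L$-strata meeting it. I would argue as follows: by Theorem \ref{thm:strat}(e) applied to $(W_\L,S_\L)$, the whole space $V^n$ is the disjoint union of the $(W_\L,S_\L)$-strata it contains, so $F' = \bdisjun_{F}(F\cap F')$ where $F$ runs over $(W_\L,S_\L)$-strata. It therefore suffices to show that whenever $F\cap F'\ne\eset$ for a $(W_\L,S_\L)$-stratum $F$ and a $(W_\G,S_\G)$-stratum $F'$, one has $F\seq F'$. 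By \eqref{eq:stratact} we may write $F' = \CX_{\bsy\G}$ with $\bsy\G=(\G_0,\dots,\G_n)\in\CI^{(n)}_\G$ after replacing $\G$ by a $W_\L$-conjugate (using $u(\G')$-type reductions as in the proof of Lemma \ref{lem:facclos}), so a point $\bv\in F'$ has $W_{\bv,i}=W_{\G_i}$ for all $i$ by the computation in the proof of Theorem \ref{thm:strat}(a),(b). Now for a $(W_\L,S_\L)$-stratum $F=w(\CX_{\bsy\L})$ meeting $F'$, pick $\bv$ in the intersection; then by Theorem \ref{thm:strat}(b) applied within $W_\L$, the stratum $F$ is exactly the set of $\bu\in V^n$ lying in the common $W_\L$-orbit structure of $\bv$, and more precisely, $F$ is the connected component of $\bv$ in $\mset{\bu\mid \Stab_{W_\L}(\tau_i\bu)=\Stab_{W_\L}(\tau_i\bv)\ \forall i}$ (Remark (3) after Theorem \ref{thm:strat}, applied to $W_\L$). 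Since $W_\G\seq W_\L$, membership in $F'$ is cut out by the analogous conditions relative to $W_\G$, which are implied by those relative to $W_\L$ together with $\bv\in F'$; hence $F\seq F'$.

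The main obstacle I expect is making the last inclusion $F\seq F'$ genuinely rigorous rather than hand-wavy — specifically, passing cleanly between the three descriptions of a stratum (as $\CX_{\bsy\L}$ up to the $W_\L$-action, as a locally closed cone, and as a connected component of a ``constant stabiliser chain'' locus) and verifying that the $W_\L$-stabiliser data along a point of $F$ determines the $W_\G$-stabiliser data and hence pins $F$ inside a single $W_\G$-stratum. An alternative, perhaps cleaner route to (b) that avoids Remark (3): note that by Theorem \ref{thm:strat}(e) for $W_\L$, $F' = \bdisjun_F (F\cap F')$; each $F\cap F'$ is a cone; and by part (a) and Theorem \ref{thm:stratproof} with $\L$ in place of $\G$, $\ol F$ is a union of $(W_\L,S_\L)$-strata, so applying (a) repeatedly shows the $(W_\L,S_\L)$-strata refine the $(W_\G,S_\G)$-strata on the level of closures; combined with disjointness this forces each nonempty $F\cap F'$ to equal $F$. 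I would present whichever of these makes the dimension/frontier bookkeeping shortest, and I would be careful to state explicitly that I am applying Theorem \ref{thm:stratproof} twice: once with $(\G,\G)$ for part (a) and once with $(\G,\L)$ for part (b).
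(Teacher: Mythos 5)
Part (a) of your proposal is essentially the paper's argument: apply Theorem \ref{thm:stratproof} with $\L=\G$ and then the fact that a cone contained in $\ol{C}\sm C$ has dimension strictly less than $\dim(C)$. (The paper also verifies this dimension drop directly for strata via the explicit homeomorphisms \eqref{eq:standfac}, but the general cone fact from \S\ref{ss:facinc} suffices, as you say.) One small slip: the relevant point is not that strata are locally closed, only that they are cones sitting inside $\ol{F}\sm F$; the local closedness plays no role in the dimension estimate.

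For part (b) the paper's proof is a short induction on $\dim(F')$: by Theorem \ref{thm:stratproof}, $\ol{F'}$ is a union of $(W_{\L},S_{\L})$-strata; by (a), $\ol{F'}\sm F'$ is a union of $(W_{\G},S_{\G})$-strata of strictly smaller dimension, each of which is a union of $(W_{\L},S_{\L})$-strata by the inductive hypothesis; and since the $(W_{\L},S_{\L})$-strata are pairwise disjoint, $F'=\ol{F'}\sm(\ol{F'}\sm F')$ is again a union of them. Your ``route 2'' is gesturing at exactly this, but ``applying (a) repeatedly'' is where the actual work lives --- you must set up the induction on $\dim(F')$ explicitly (the base case being strata of negative dimension, of which there are none). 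Your ``route 1'' has a genuine gap: the inclusion $F\seq F'$ is made to rest on Remark (3) following Theorem \ref{thm:strat}, which the paper states without proof, and the direction you need --- that the connected component of the constant-$W_{\G}$-stabiliser-chain locus through $\bv$ is \emph{no larger than} the stratum $F'$ --- is precisely the nontrivial content of that remark. (You also attribute to Theorem \ref{thm:strat}(b) a characterisation of $F$ that is really Remark (3); part (b) only computes stabilisers of points of a given stratum.) Proving that direction requires showing that each stratum inside the constant-stabiliser locus is closed in it, which again comes down to the dimension estimate of part (a) applied to frontiers of strata --- at which point you have reproduced the paper's induction in disguise. So route 1 is salvageable but not shorter, and as written it is incomplete; I would adopt the inductive argument of route 2 in the form above.
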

\begin{proof}
 Write $d:=\dim V$.  Let $\G$ be a simple subsystem 
  and $\D\seq \G$. 
   Then  one has $\dim (\CC_{\G,\D}):=\dim (\real \CC_{\G,\D})=d-\vert \D\vert$.  
   In fact, there is an isomorphism of vector spaces
  $V\xrightarrow{\cong} \real^{d}$ which induces a homeomorphism 
  $\CC_{\G,\D}\xrightarrow{\cong} \real^{d-\vert \G\vert}\times \real_{>0}^{\vert \G\sm \D\vert}$ 
  and
 $\ol{\CC_{\G,\D}}\xrightarrow{\cong} \real^{d-\vert \G\vert}
 \times \real_{\geq 0}^{\vert \G\sm \D\vert}$  where $\real_{\geq 0}^{0}=\real^{0}:=\set{0}$
 and $\real^{m}$ is identified with $\real^{m-i}\times \real^{i}$ for $0\leq i\leq m$.
 
 It follows from  the above and the definitions that for $\bsy{\G}\in \CI^{(n)}_{\G}$, one has
 $\dim(\CX_{\bsy{\G}})=\sum_{i=1}^{n}\dim \CC_{\G_{i-1},\G_{i}}=nd-
 \sum_{i=1}^{n}\vert \G_{i}\vert$. Also, there is  an isomorphism
 $V^{n}\xrightarrow{\cong} \real^{nd}$  inducing  homeomorphisms
 \begin{equation}\label{eq:standfac}\CX_{\bsy{\G}}\cong
  \real^{nd-\sum_{i=1}^{n} \vert \G_{i-1}\vert }\times \real_{>0}^
  {\vert \G_{0}-\G_{n}\vert},\qquad 
\ol{\CX_{\bsy{\G}}}\cong \real^{nd-\sum_{i=1}^{n} \vert \G_{i-1}\vert }
\times \real_{\geq 0}^{\vert \G_{0}-\G_{n}\vert}
 \end{equation} 

 Note that any cone in $\ol{\CX_{\bsy{\G}}}\sm \CX_{\bsy{\G}}$ has 
 dimension strictly smaller than that of $\CX_{\bsy{\G}}$
 (either by the general fact mentioned above or by a direct check in this case). 
 Part (a) follows readily from this fact and  the special case of Theorem 
 \ref{thm:stratproof} in which  $\L=\G$. Given (a), (b) follows from Theorem
 \ref{thm:stratproof} by induction on $\dim(F')$ as follows: (b)
 holds vacuously for strata $F'$ of negative dimension (since there are none).
 In general,  $\ol{F'}$ is a union of $(W_{\L},S_{\L})$-strata by Theorem \ref{thm:stratproof},  $\ol{F'}\sm {F'}$ is a union of $(W_{\G},S_{\G})$-strata of dimension less 
 than $\dim(F')$ by (a) and hence is a union of $(W_{\L},S_{\L})$-strata by 
 induction, and therefore $F'= \ol{F'}\sm(\ol{F'}\sm {F'})$ is a union of 
 $(W_{\L},S_{\L})$-strata as asserted. 
 \end{proof}
  \ssect{Character formulae}\label{ss:chars} 
  We finish this section with a character-theoretic application of Theorem \ref{thm:strat}. Assume for simplicity that $\real \Pi=V$.
  The Coxeter complex $\{w(\CC_{S,J})$ for $J\sneq S$, $w\in W\}$ provides a subdivision of
 the unit sphere $\bbS(V)$ in $V\cong\real^\ell$ into spherical simplices. Applying the Hopf trace formula to the resulting chain complex, and recalling that 
 $\dim(w(\CC_{S,J})\cap\bbS(V))=\ell-1-|J|$, one obtains the familiar character formula (due to Solomon)
 
 \begin{equation}\label{eq:sol}
 \det{_V}(w)=\sum_{J\subseteq S}(-1)^{|J|}\Ind_{W_J}^W(1)(w)\text{    for  } w\in W.
 \end{equation}
 
 It follows from  Theorem \ref{thm:strat} and \eqref{eq:standfac} that the intersections of the strata $w(CX_{\bI})$, where $w\in W$ and $\bI\neq (S,S,\dots,S)$, with the unit sphere
 $\bbS(V^n)$ give a subdivision of $\bbS(V^n)$ into spherical cells (homeomorphic to open balls), and one may again apply the Hopf trace formula to the resulting chain 
 complex. A straightforward computation, using the fact that for any $u\in W$,
$\dim(u\CX_{\bI}\cap \bbS(V^n))=n\ell-(|I_1|+|I_2|+\dots+|I_n|)-1$,
  then shows that, given the formula \eqref{eq:sol}, we have the following formula for $w\in W$.
 
  \begin{equation}\label{eq:sol2}
 \det{_{V^n}}(w)=\det{_V}(w)^n=\sum_{\bI=(I_0,I_1,\dots,I_n)\in \CI^{(n)}}(-1)^{(|I_1|+|I_2|+\dots+|I_n|)}\Ind_{W_{I_n}}^W(1)(w).
 \end{equation}
 
 \begin{rem}
 It is an easy exercise to show that for fixed $I_n\subseteq S$, we have 
 \[
 \sum_{\bI=(I_0,I_1,\dots,I_n)\in \CI^{(n)}}(-1)^{(|I_1|+|I_2|+\dots+|I_n|)}=
 \begin{cases}
 (-1)^{n|S|}\text{ if }I_n=S\\
 0\text{ if } I_n\subsetneq S\text{ and }n\text{ is even}\\
 (-1)^{|I_n|}\text{ if } I_n\subsetneq S\text{ and }n\text{ is odd}.\\
 \end{cases}
 \]
 
 It follows that when $n$ is even, \eqref{eq:sol2} is amounts to the statement that $\det^n=1_W$, 
 while if $n$ is odd, the right side of \eqref{eq:sol2} reduces to the right side of \eqref{eq:sol},
 and therefore amounts to the statement that for $n$ odd, $\det^n=\det$.
 \end{rem}

\section{Topological and combinatorial properties of the stratification.}\label{s:minmax}
Maintain the assumptions of \S\ref{ss:strat}. Write $d:=\dim(V)$.
Let $\CF=\CF^{(n)}:=\mset{w(\CX_{\bI})\mid \bI\in \CI^{(n)}_{W}, w\in W}$ denote the 
set of all $W$-strata of $V^{n}$, partially ordered by inclusion of closures 
of strata; i.e. for $F,G\in \CF^{(n)}$, we say that $F\leq G$ if
$\ol{F}\seq \ol{G}$. The fact that  this defines a partial order follows from
Corollary \ref{cor:facinc}(a).  Note that $W$ acts naturally on $\CF^{(n)}$ as a
group of order preserving automorphisms. 

Let  $\ol{\CF}^{(n)}:=\mset{(w,{\bI})\mid  \bI\in \CI^{(n)}_{W}, w\in W^{I_{n}}}$. The map $(w,\bI)\mapsto w(\CX_{\bI})\colon \ol{\CF}^{(n)}\to
\CF^{(n)}$ is a bijection, by  Theorem \ref{thm:strat}(b). We use this
bijection to transfer the  partial order and  the $W$-action on 
$\CF^{(n)}$ to a partial order and  the $W$-action on $\ol{\CF}^{(n)}$.
Using   \eqref{eq:stratcl}, one sees that this partial order and 
$W$-action $\ol{\CF}^{(n)}$ have a purely combinatorial description in 
terms of the Coxeter system $(W,S)$; this is in analogy with the description
of the Coxeter complex in terms of cosets of standard parabolic subgroups.
In particular, 
$\ol{\CF}^{(n)}$ would be unchanged (as poset with $W$-action) if  it  
had been defined using the  diagonal $W$-action on $(\real \Phi)^{n}$ instead 
of that on $V^{n}$.

\begin{lemma}\label{lem:minmax} Let $\bI_{-}:= (S,S, \ldots, S)\in  \CI^{(n)}_{W}$ and 
$\bI_{+}:= (S,\eset,\ldots, \eset )\in  \CI^{(n)}_{W}$. \begin{num}
\item The poset $\CF^{(n)}$ has a minimum element  $\hat{0}:=\CX_{\bI_{-}}$. 
\item The elements $w(\CX_{\bI_{+}})$ for $w\in W$ are the distinct maximal 
elements of $\CF^{(n)}$.\end{num}
\end{lemma}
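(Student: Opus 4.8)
The plan is to read off both claims from the dimension formula $\dim(\CX_{\bI}) = nd - \sum_{i=1}^{n}|I_i|$ established in \eqref{eq:standfac}, together with the closure formula \eqref{eq:stratcl} and the fact that the closure of a stratum is a union of strata (Theorem \ref{thm:strat}(f)). For part (a), I would first observe that $\CX_{\bI_-} = \CC_{S,S}\times\cdots\times\CC_{S,S} = \{0\}\times\cdots\times\{0\} = \{0\}$, which lies in the closure of every stratum, since every cone contains $0$ in its closure and strata are cones. Since $\{0\}$ is itself a stratum and $\ol{\{0\}} = \{0\}$, we get $\ol{\CX_{\bI_-}}\seq\ol{F}$ for every $F\in\CF^{(n)}$, i.e. $\CX_{\bI_-}\leq F$ for all $F$; and it is the unique minimum because it is the unique stratum of dimension $0$ (any stratum $\CX_{\bI}$ with some $|I_j| < |S|$ contains a nonzero vector in its $j$-th coordinate, hence has positive dimension, and $W$-translates preserve dimension).

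For part (b), the maximal elements of $\CF^{(n)}$ under $\leq$ are exactly the strata of maximal dimension: if $F < G$ then $\dim F < \dim G$ by Corollary \ref{cor:facinc}(a), and conversely any stratum of maximal dimension is maximal in the poset. The dimension $\dim(\CX_{\bI}) = nd - \sum_{i=1}^n |I_i|$ is maximized precisely when $\sum_{i=1}^n|I_i|$ is minimized; since $I_0 = S$ is forced and $S = I_0\sreq I_1\sreq\cdots\sreq I_n$, the minimum of $\sum_{i=1}^n|I_i|$ is $0$, attained uniquely at $I_1 = \cdots = I_n = \eset$, i.e. at $\bI = \bI_+$. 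Thus the strata of top dimension are exactly the $W$-translates $w(\CX_{\bI_+})$, and by Theorem \ref{thm:strat}(b) (via the bijection with $\ol\CF^{(n)}$, where $I_n = \eset$ gives $W^{I_n} = W$) these are pairwise distinct as $w$ ranges over $W$. This gives that $\{w(\CX_{\bI_+})\mid w\in W\}$ is precisely the set of distinct maximal elements.

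The only point requiring a little care — and the step I'd flag as the main obstacle — is establishing that top-dimensional strata are maximal \emph{and} conversely, i.e. that there are no "dimension collisions" creating spurious maximal elements, and that no $w(\CX_{\bI})$ with $\bI\neq\bI_+$ can be maximal. The forward direction (maximal $\Rightarrow$ top-dimensional) needs the observation that any non-top-dimensional stratum $F$ is strictly below \emph{some} stratum: concretely, if $\CX_{\bI}$ has $|I_j| > 0$ for some $j\geq 1$, one can exhibit a strictly larger stratum by passing to $\bI'$ with one fewer generator in some $I_j$ and checking via \eqref{eq:stratcl} that $\CX_{\bI}\seq\ol{\CX_{\bI'}}$ (after suitable $W$-translation); alternatively, and more cleanly, one uses $V^n = \bigsqcup w(\CX_{\bI})$ (Theorem \ref{thm:strat}(e)) so that any point of $F$ lies in the closure of a top-dimensional stratum meeting any neighbourhood, whence $F\leq$ that stratum. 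Once this is in hand the rest is bookkeeping with the dimension formula.
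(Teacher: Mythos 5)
Your part (a) has a genuine gap: you identify $\CX_{\bI_-}=\CC_{S,S}\times\cdots\times\CC_{S,S}$ with $\set{0}$, but $\CC_{S,S}=\Pi^{\perp}$, which is $\set{0}$ only when $\Pi$ spans $V$. Section \ref{s:minmax} does \emph{not} assume $V=\real\Pi$ (compare Lemma \ref{lem:celldim}(a), where $[\hat 0]\cong\bbS^{n(d-\vert S\vert)-1}$ is nonempty exactly when $d>\vert S\vert$), so the observation that $0$ lies in the closure of every cone only gives $\set{0}\seq\ol{F}$, not $\CX_{\bI_-}\seq\ol{F}$. For the same reason your uniqueness remark (``the unique stratum of dimension $0$'') is false in general, though it is also unnecessary: a minimum of a poset is automatically unique once you know $\hat0\le F$ for all $F$. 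The gap is fixable in two ways: either invoke the paper's remark preceding Lemma \ref{lem:minmax} that the poset is unchanged upon replacing $V$ by $\real\Phi$, and then run your argument; or argue as the paper does, directly from \eqref{eq:facets2}, that $\ol{\CC_{I_{i-1},I_i}}\sreq\CC_{I_{i-1},I_{i-1}}\sreq\CC_{S,S}$, whence $\ol{\CX_{\bI}}\sreq\ol{\CX_{\bI_-}}$ factor by factor. (The paper also uses that $\hat0$ is $W$-fixed to reduce to strata of the form $\CX_{\bI}$.)

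Part (b) is correct but takes a genuinely different route. The paper avoids dimensions entirely: from $\CC_{S,I_1}\seq\ol{\CC_{S,\eset}}$ and $\CC_{I_{i-1},I_i}\seq V=\CC_{\eset,\eset}$ it gets $\CX_{\bI}\seq\ol{\CX_{\bI_+}}$ for every $\bI$, hence every stratum lies below some $w(\CX_{\bI_+})$; transitivity of the order-preserving $W$-action on $\set{w(\CX_{\bI_+})}$ plus the existence of at least one maximal element then shows all of them are maximal, and distinctness follows since the stabiliser is $W_{\eset}=1$. Your argument instead characterises maximal strata as top-dimensional ones, using Corollary \ref{cor:facinc}(a) for ``$F<G\Rightarrow\dim F<\dim G$'' and a density argument ($\bigcup_{w}\ol{w(\CX_{\bI_+})}=\bigcup_w w(\CC_W)\times V^{n-1}=V^n$, together with Theorem \ref{thm:strat}(e),(f)) for the converse; the step you flag as the main obstacle is indeed handled correctly by your second, density-based alternative. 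Your version costs a little more machinery but yields the extra fact that maximal equals top-dimensional; the paper's is shorter and, once part (a) is repaired along the same product-of-closures lines, both halves of the lemma follow from one uniform computation.
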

\begin{proof} Note that by Theorem \ref{thm:strat}, $\hat{0}$ is fixed by 
the $W$-action. To show that $\hat{0}$ is the minimum element of $\CF$, it  
therefore suffices to show that if
 $\bI\in \CI^{(n)}_{W}$, one has $\hat{0}\seq \ol{\CX_{\bI}}$.
 This is clear since  by \eqref{eq:strat2} and  \eqref{eq:facets2} 
 \begin{equation*}
 \ol{\CX_{\bI}}=\ol{\CC_{I_{0},I_{1}}}\times \ldots \times \ol{\CC_{I_{n-1},I_{n}}}
 \sreq \ol{\CC_{I_{0},I_{0}}}\times \ldots \times \ol{\CC_{I_{n-1},I_{n-1}}}\sreq\ol{\CC_{S,S}}
 \times \ldots \times \ol{\CC_{S,S}}=\ol{\CX_{\bI_{-}}}
 \end{equation*} 
 since $\CC_{I,I}$ is the set of all points of $V$ fixed by $W_{I}$.  
 This proves (a).
 
 A similar argument  using $\CC_{\eset,\eset}=V$ shows that $\ol{\CX_{\bI_{+}}}\sreq \CX_{\bI}$
 for all  $\bI\in \CI^{(n)}_{W}$.  Since by definition $\CF=\mset{w(\CX_{\bI})\mid w\in W, \bI\in \CI^{(n)}_{W}}$, this implies that any 
 maximal stratum in  $\CF$ is of the form   $w(\CX_{\bI_{+}})$ for some $w\in W$.
 But $W$ acts simply transitively on the set of these strata and there is at least 
 one maximal element, so (b) follows.
\end{proof}

%%%%%%%%%%%%%%%%%%%%%%%%%%%%%%%%%%%%%%%%%%%%%%%%%%%%%%%%%%%%%%%%%%%%%%%%%%%%%%%%%%%%%%

\ssect{Topology} \label{ss:celldim} In this subsection, we discuss basic topological 
facts about the stratification of $V^{n}$.
For $m\in \Nat$, let $\bbB^{m}$ denote the standard $m$-ball in 
$\real^{m}$ and  $\bbS^{m-1}$ its boundary,  the standard  $m$-sphere 
(with $\bbS^{-1}:=\eset$).

Let $U$ be a finite-dimensional real vector space. A \emph{ray} in $U$ is a 
subset of $U$ of the form $\real_{>0}v$ for some non-zero $v\in U$.
Let $\CR=\CR_{U}:=\mset{\real_{>0}v\mid v\in U, v\neq 0}$ denote the set 
of all rays in $U$. Topologise $\CR$  as follows. 
  Let $K$ be a convex body (i.e. a compact 
convex  set with non-empty interior)  in $U$ with $0$ in its interior,
so that $K$ contains a small ball with centre $0$.  
Let $\partial(K)$ denote the boundary of $K$ i.e. the set of all 
non-interior points of $K$. The map $v\mapsto \real_{>0}v\colon \partial(K)\to \CR$ is 
a bijection and we topologise $\CR$ so this map is a homeomorphism.
A compactness argument shows the resulting topology is independent 
of  choice of $K$. Taking $K$ as the unit sphere in $U$ (with respect to 
some Euclidean space  structure on $U$) gives 
$\CR\cong \bbS^{\dim (U)-1}$. 

There is a map 
$C\mapsto [C]:=\mset{\real_{>0}v\mid v\in C\sm\set{0}}$ taking convex 
cones $C$ in $U$ to subsets of  $\CR$.
Clearly, $[C]=[C']$ if and only if $C'\cup\set{0}=C\cup\set{0}$.
This map  satisfies $[\ol{C}]=\ol{[C]}$ and $[\inter({C})]=\inter([C])$ where 
$\ol{X}$ and $\inter(X)$ denote respectively the closure and interior of a subspace 
$X$ of the  ambient topological space ($U$ or $\CR$).

We apply the above considerations with $U=V^{n}$. Recall that here $\dim(V)=d$.
\begin{lemma}\label{lem:celldim}
\begin{num}
\item  $[\hat 0]=[\CX_{\bI_{-}}]\cong \bbS^{n(d-\vert S\vert)-1}$.
\item  If $F=w(\CX_{\bI})\in \CF^{(n)}\sm\set{\hat 0}$,  then  $[\ol{F}]\cong
\bbB^{N}$ where $N=nd-1-\sum_{i=1}^{n}\vert I_{i}\vert$ and $[F]\cong
\bbB^{N}\sm\partial(\bbB^{N}):=\bbB^{N}\sm \bbS^{N-1}$. 
 \end{num}
\end{lemma}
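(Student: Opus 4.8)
The plan is to read off both statements from the product description of the strata in \S\ref{ss:strat}--\S\ref{ss:facinc}, together with elementary facts about the set of rays of a convex cone.

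For part (a): by \eqref{eq:strat2}, $\hat 0=\CX_{\bI_{-}}$ is the $n$-fold product of $\CC_{S,S}=\mset{v\in V\mid\mpair{\a,v}=0\text{ for all }\a\in\Pi}=\Pi^{\perp}$, hence is the linear subspace $(\Pi^{\perp})^{n}$ of $V^{n}$, of dimension $n(d-\vert S\vert)$ (as $\Pi$ is linearly independent with $\vert\Pi\vert=\vert S\vert$). For any linear subspace $L$ of $V^{n}$ one has $[L]=\CR_{L}$, and choosing the convex body defining the topology of $\CR_{L}$ inside $L$ gives $\CR_{L}\cong\bbS^{\dim L-1}$ (with $\bbS^{-1}=\eset$). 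This yields (a) at once.

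For part (b) I would first reduce to $w=1$: since the inner product on $V^{n}$ is $W$-invariant, each $w\in W$ acts on $\CR=\CR_{V^{n}}$ as a homeomorphism (take the unit ball as convex body), and $[\ \cdot\ ]$ commutes with $w$ and with closure, so $[\,\ol F\,]=w[\,\ol{\CX_{\bI}}\,]$ and $[F]=w[\CX_{\bI}]$. Now $\ol{\CX_{\bI}}=\prod_{i=1}^{n}\ol{\CC_{I_{i-1},I_{i}}}$ is a closed convex cone, and by \eqref{eq:facets2} each $\CC_{I,J}$ is the relative interior of $\ol{\CC_{I,J}}$, so $\CX_{\bI}=\on{relint}(\ol{\CX_{\bI}})$. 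By \eqref{eq:standfac} there is a linear isomorphism of $V^{n}$ onto $\real^{nd}$ carrying $\ol{\CX_{\bI}}$ onto $\real_{\geq 0}^{b}\times\real^{a}\subseteq\real^{a+b}\subseteq\real^{nd}$ and $\CX_{\bI}$ onto $\real_{>0}^{b}\times\real^{a}$, where $b:=\vert S\vert-\vert I_{n}\vert$ and $a:=nd-\sum_{i=0}^{n-1}\vert I_{i}\vert$; a short computation gives $a+b-1=N$, and $b\geq 1$ precisely because $\bI\neq\bI_{-}$ forces $I_{n}\sneq S$.

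The topological core is then the computation of $[C]$ for $C=\real_{\geq 0}^{b}\times\real^{a}$: since $[\ \cdot\ ]$ only involves the span $L_{0}:=\real\ol{\CX_{\bI}}\cong\real^{a+b}$, and $\CR_{L_{0}}$ carries the subspace topology from $\CR_{V^{n}}$ (immediate from the construction, using compatible convex bodies) with $\CR_{L_{0}}\cong\bbS^{N}$, I would identify $[\,\ol{\CX_{\bI}}\,]$ with $(\real_{\geq 0}^{b}\times\real^{a})\cap\bbS^{N}$ and $[\CX_{\bI}]$ with $(\real_{>0}^{b}\times\real^{a})\cap\bbS^{N}$. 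Writing $C$ as the orthogonal sum of its lineality space $\real^{a}$ and the pointed cone $\real_{\geq 0}^{b}$, one identifies $C\cap\bbS^{N}$ with the topological join $\bbS^{a-1}*\Sigma_{b}$, where $\Sigma_{b}=\real_{\geq 0}^{b}\cap\bbS^{b-1}$ is the spherical $(b-1)$-simplex (the geometric join of compact subsets of spheres being homeomorphic to the abstract join). Two standard facts then conclude: $\Sigma_{b}\cong\bbB^{b-1}$ by radial projection onto the affine simplex $\set{p\in\real_{\geq 0}^{b}\mid\sum_{j}p_{j}=1}$, this homeomorphism carrying $\on{relint}\Sigma_{b}$ onto $\bbB^{b-1}\sm\bbS^{b-2}$; and $\bbS^{p}*\bbB^{q}\cong\bbB^{p+q+1}$ (via $\bbB^{q}=\mathrm{Cone}(\bbS^{q-1})$ and associativity of the join), under which the open join, i.e. the complement of the boundary $\bbS^{p}*\partial\bbB^{q}$, corresponds to $\bbB^{p+q+1}\sm\bbS^{p+q}$. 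Since under our identifications $\CX_{\bI}=\on{relint}\ol{\CX_{\bI}}$ corresponds to $\real_{>0}^{b}\times\real^{a}$, i.e. to the open join, the \emph{same} homeomorphism gives simultaneously $[\,\ol F\,]\cong\bbB^{N}$ and $[F]\cong\bbB^{N}\sm\bbS^{N-1}$, proving (b).

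The main obstacle is the topological bookkeeping in the last step: establishing that the geometric join of compact subsets of spheres is the abstract join, and — more delicately — checking that relative interiors and boundaries are matched consistently along the chain of homeomorphisms (spherical simplex to affine simplex, and open join to interior of a ball), so that the isomorphisms producing $[\,\ol F\,]\cong\bbB^{N}$ and $[F]\cong\bbB^{N}\sm\bbS^{N-1}$ really are compatible. A minor point also needing a line is the identification $\CR_{L_{0}}\seq\CR_{V^{n}}$ as subspaces, which legitimises passing from $V^{n}$ to $L_{0}$; and in degenerate cases ($a=0$, or $b=1$, or $N\in\set{0,-1}$) one should record that the formulas $\bbS^{-1}=\eset$, $\eset*X=X$, $\bbB^{0}=\mathrm{pt}$ make everything consistent.
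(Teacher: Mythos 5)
Your proof is correct and takes essentially the same route as the paper's: both reduce, via the homeomorphisms of \eqref{eq:standfac} and the independence of the topology on $\CR_{V^{n}}$ from the choice of convex body, to identifying $(\real^{a}\times\real_{\geq 0}^{b})\cap\bbS$ and its open analogue with a closed ball and its interior (resp.\ a sphere when $b=0$). The paper simply omits the details of that last identification; your join decomposition $\bbS^{a-1}*\Sigma_{b}$ is a clean way of supplying them, and your handling of $b\geq 1$ for $F\neq\hat 0$ and of the degenerate cases is accurate.
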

\begin{proof} Note that for $\bI\in \CI^{(n)}_{W}$, one has
$ {\CX_{\bI}}={\CX_{\bI_{-}}}$ if and only if $I_{0}=I_{n}$.
Using  \eqref{eq:standfac} and the independence of the topology on $\CR_{V^{n}}$ 
from the choice of compact body $K$ in its definition, it suffices to verify that 
for  $m\leq n\leq M$ in $\Nat$ with $n\geq 1$, the following equations hold 
 in $\real^{M}=\real^{m}\times \real^{n-m}\times \real^{M-n}$: \begin{equation*}
(\real^{m}\times \real_{\geq 0}^{n-m})\cap \bbS^{M-1}\cong 
\begin{cases}\bbS^{n-1},&\text{if $m=n$}\\
\bbB^{n-1},&\text{if $m<n$}
\end{cases}
\end{equation*}
and 
$
(\real^{m}\times \real_{> 0}^{n-m})\cap \bbS^{M-1}\cong 
\partial(\bbB^{n-1}) \text{ \rm if $m<n$}
$.
 Details are omitted.\end{proof}

 \subsection{Regular cell decompositions}\label{ss:regcell} 
 We shall use below  notions of regular cell complexes
 and their face posets, and  shellability of posets and complexes. A  
convenient reference for the definitions and  facts  required   is \cite[4.7]{BjMat}. 
\begin{prop}\label{prop:regcell}
\begin{num}
\item
Suppose that $V=\real \Pi$. Then $\mset{[F]\mid F\in \CF\sm\set{\hat 0}}$ is 
(the set of open cells of) a regular cell decomposition of 
$\CR_{V^{n}}\cong \bbS^{nd-1}$ where $d:=\dim V=\vert S\vert$.
\item The poset $\ol{\CF}^{(n)}\sm\set{\hat 0}$ is  the face poset of 
a regular cell decomposition of  $\bbS^{n\vert S\vert -1}$.
\end{num}
\end{prop}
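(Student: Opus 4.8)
The plan is to check directly, against the characterization of regular cell complexes recalled in \cite[4.7]{BjMat}, that the family $\mset{[F]\mid F\in\CF^{(n)}\sm\set{\hat 0}}$ satisfies all of its axioms, feeding in the geometric input already established in Lemma \ref{lem:celldim}, Theorem \ref{thm:strat}(e)--(f) and Corollary \ref{cor:facinc}(a).

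For (a), I would first note that the hypothesis $V=\real\Pi$ forces $\CC_{S,S}=\Pi^{\perp}=\set{0}$, so $\hat 0=\CX_{\bI_{-}}=\set{(0,\ldots,0)}$ and $[\hat 0]=\eset$, while every other stratum $F$ is a cone properly containing $\set{0}$, whence $[F]\neq\eset$. Since the strata partition $V^{n}$ (Theorem \ref{thm:strat}(e)) and are cones, the sets $[F]$ for $F\in\CF^{(n)}\sm\set{\hat 0}$ are pairwise disjoint and cover $\CR_{V^{n}}\cong\bbS^{nd-1}$: if $\real_{>0}u=\real_{>0}v$ with $u$ in a stratum $F$ and $v$ in a stratum $G$, then a positive multiple of $u$ lies in $G$, so $F=G$ by disjointness; and every ray meets the stratum containing any of its points. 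The family is finite because $\CI^{(n)}$ and $W$ are. By Lemma \ref{lem:celldim}(b), each $[F]$ is an open $N_{F}$-cell, its closure in $\CR_{V^{n}}$ equals $[\ol F]$ (using $\ol{[C]}=[\ol C]$ from \S\ref{ss:celldim}), and there is a homeomorphism $[\ol F]\cong\bbB^{N_{F}}$ carrying $[F]$ onto the open ball; these provide the required regular characteristic maps. Finally, by Theorem \ref{thm:strat}(f) $\ol F$ is a union of strata, so $[\ol F]$ is a union of the cells $[G]$; by Corollary \ref{cor:facinc}(a) each stratum $G\seq\ol F\sm F$ has $\dim G<\dim F$, hence $\dim[G]<\dim[F]$, and $[\hat 0]=\eset$ contributes nothing, so $\ol{[F]}\sm[F]$ is a union of cells of strictly smaller dimension. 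This verifies all the axioms, and so establishes (a).

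To identify the face poset, observe that in the complex of (a) one has $[F]\le[G]$ iff $\ol{[F]}\seq\ol{[G]}$, i.e. $[\ol F]\seq[\ol G]$, which (since $\ol F,\ol G$ are cones through $0$) holds iff $\ol F\seq\ol G$, i.e. iff $F\le G$ in $\CF^{(n)}$; so the face poset is $\CF^{(n)}\sm\set{\hat 0}$, carried isomorphically onto $\ol{\CF}^{(n)}\sm\set{\hat 0}$ by the bijection $(w,\bI)\mapsto w(\CX_{\bI})$ of \S\ref{s:minmax}. For (b), when $V=\real\Pi$ this is immediate from (a); in general I would invoke the remark of \S\ref{s:minmax} that $\ol{\CF}^{(n)}$, as a poset, is unchanged if $V^{n}$ is replaced by $(\real\Phi)^{n}$. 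Since $\real\Phi=\real\Pi$ has dimension $\vert S\vert$, part (a) applied with $V$ replaced by $\real\Phi$ realizes $\ol{\CF}^{(n)}\sm\set{\hat 0}$ as the face poset of a regular cell decomposition of $\CR_{(\real\Phi)^{n}}\cong\bbS^{n\vert S\vert-1}$, which is exactly the assertion of (b).

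Essentially all of the analytic content is already contained in the cited results, so what remains is largely bookkeeping. The point demanding slight care is the treatment of the bottom stratum: one must observe that the hypothesis $V=\real\Pi$ collapses $\hat 0$ to a single point, so that $[\hat 0]=\eset$ and it interferes with none of the covering, disjointness, or boundary-incidence conditions — which is precisely why $\hat 0$ is deleted from the posets in both parts — and the reduction of (b) to (a) must be made explicit using the independence of $\ol{\CF}^{(n)}$ from the ambient space.
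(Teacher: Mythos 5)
Your proposal is correct and follows essentially the same route as the paper: it verifies the regular-cell axioms for the $[F]$ using Lemma \ref{lem:celldim}, Corollary \ref{cor:facinc}(a) and Theorem \ref{thm:strat}(e)--(f), and deduces (b) from the independence of $\ol{\CF}^{(n)}$ from the ambient space, exactly as the paper does (your version merely spells out the covering/disjointness and face-poset identifications that the paper leaves to the definition in \cite[4.7]{BjMat}). The only quibble is the phrase ``properly containing $\set{0}$'' for strata $F\neq\hat 0$ --- such strata generally do not contain $0$ at all; what you need (and use) is just that they contain a nonzero point, so $[F]\neq\eset$.
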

\begin{proof} First we prove (a). Let $\O:=\CF\sm\set{\hat 0}$, regarded as poset.  
For $F\in \O$, call $[F]\seq \CR$ an open cell and its closure 
$\ol{[F]}=[\ol{F}]$ a closed cell.  By Corollary \ref{cor:facinc} and Lemma \ref{lem:celldim}
(and the discussion preceding its statement), each closed cell $\ol{[F]}=[\ol{F}]$ 
is a ball in $\CR$ with $[F]$ as its interior and   with boundary 
\begin{equation*}
\ol{[F]}\sm [F]=\cup_{G\in \O, G< F}[G]=\cup_{G\in \O, G< F}[\ol{G}]
\end{equation*} 
equal to a union of closed cells. Since $\CR$ is Hausdorff, (a) 
follows by the definition in \cite{BjMat}.  By the discussion immediately 
preceding Lemma \ref{lem:minmax},   $\ol{\CF}^{(n)}\sm\set{\hat 0}$ is the 
face poset of the  regular cell complex in (a), and (b) follows.\end{proof}

\subsection{} Proposition \ref{prop:regcell} has a number   of combinatorial and 
topological consequences listed in \cite[4.7]{BjMat}.
In particular, the finite  poset   $\wh{\CF}^{(n)}:=\ol{\CF}^{(n)}\cup\set{\hat 1}$ 
obtained by formally  adjoining  a maximum element $\hat 1$ to  $\ol{\CF}^{(n)}$ is 
graded (i.e. has a minimum  element  and a maximum element, and all its maximal chains 
(totally ordered subsets) have the same cardinality).
Note that  $\wh{\CF}^{(n)}$ is  called the \emph{extended face poset} of the regular 
cell complex in Proposition \ref{prop:regcell}(a).

We conclude with the remark that significant parts of the above 
results (though not the regular cell subdivisions of spheres in 
Proposition \ref{prop:regcell}, for example)
extend mutatis mutandis to the  diagonal action of  infinite Coxeter groups 
on powers $U^{n}$ of their Tits cone $U$. 

\section{Applications to conjugacy of sets of roots and vectors}\label{sec:app}
\subsection{} Let $\real^{m\times n}$ denote the set of real $m\times n$ matrices and $A\mapsto A^{t}$ denote the matrix transpose. Identify $\real^{n}=\real^{n\times 1}$.

We shall be addressing the classification of certain tuples $(v_1,\dots,v_n)\in V^n$ up to order, under the action of $W$. Evidently
the $W$-action leaves invariant the set of inner products $\langle v_i,v_j\rangle$, $1\leq i,j\leq n$. Arranging these inner products in a matrix 
motivates the following definition.
 \begin{definition}\label{def:genus} 
 A \emph{genus} of \emph{rank} $n$ is a $n\times n$ matrix $\s=(a_{i,j})_{i,j=1}^{n}$ of real numbers. 
 The symmetric group $\Sym_{n}$ acts on the set $\CG^{(n)}=\real^{n\times n}$ of  genera of rank $n$ in such a way that, for $\rho \in \Sym_{n}$ and $\s\in \CG^{(n)}$,  one has  
 $\rho\sigma:=\s'=(a'_{i,j})_{i,j=1}^{n}$ where $a'_{i,j}=a_{\rho^{-1}(i),\rho^{-1}(j)}$.
  The \emph{automorphism group} $G_{\s}$ of the genus $\s$ is the stabiliser of $\sigma$ in this action.
  Two genera of the same rank $n$ are said to be of the same \emph{type} if they are in the same  $\Sym_{n}$ orbit on $\CG^{(n)}$. More formally, we define a \emph{type} of genera of rank $n$ to be a $\Sym_{n}$ orbit 
  on $\CG^{(n)}$ and write $I(\s):=\Sym_{n}\s$ (the $\Sym_{n}$-orbit of $\s$) for the type of the genus $\sigma$.
   \end{definition}
    For example, the Cartan matrices  of fixed rank $n$
    can be regarded as genera, and two of the them have the 
    same type (in the above sense) if and only if they have the 
    same type (of form  $A_{1}^{n_{A_{1}}}\times \cdots \times G_{2}^{n_{G_{2}}}$) in the usual sense. Similar remarks apply to generalised Cartan matrices.
      Thus, we may regard types, in the usual sense,  of 
      (generalised) Cartan matrices as (special) types of 
      genera in the above sense
  
  \subsection{} \label{ss:genus1} For a natural number $n$, an \emph{ordered system} of \emph{rank} $n$ in $V$ is by definition, an $n$-tuple $\bb=(\b_{1},\ldots, \b_{n})\in V^{n}$. The {\it Gram genus} of  $\bb=(\b_1,\dots,\b_n)$  is defined to be  the Gram matrix
 $C'(\bb):=(c_{i,j})_{i,j=1,\ldots, n}\in \real^{n\times n}$ where $c_{i,j}:=\mpair{\b_{i},\b_{j}}$. 
This defines a $\Sym_{n}$-equivariant  map $C'\colon V^{n}\to \CG^{(n)}$.

 Note that $\s:=C'(\bb)$ is a  positive semidefinite matrix, and is  positive 
 definite (equivalently, it is invertible)  if and only if  $[\bb]$ is linearly independent. More generally, the 
 space $\set{a=(a_{1},\ldots, a_{n})^{t}\in
 \real^{n}\mid \sum_{i=1}^{n}a_{i}\b_{i}=0}$ of linear 
 relations on $\bb$ identifies with the radical of the quadratic
  form $a\mapsto a^{t} C'(\bb)a\colon \real^{n}\to \real$   with matrix $C'(\bb)$ with respect to the standard basis of $\real^{n}$.  It readily  follows that the non-empty fibres of the map $C'$ are the orbits  of the orthogonal group 
  $\mathrm{O}_{V}:=\mathrm{O}(V,\mpair{-,-})$ acting diagonally on $V^{n}$
  (and the set of matrices over which the fibres are non-empty is  precisely the set of  positive   semidefinite, symmetric
matrices in $\real^{n\times n}$). Also,  for $1\leq i<j\leq n$, one has $\b_{i}=\b_{j}$  if and only if the $i$-th and $j$-th  columns (or rows) of $C'(\bb)$ are equal.
In particular,  the columns of $\s$ are pairwise distinct if and only if  $\b_{1},\ldots, \b_{n}$ are pairwise distinct.  In that case,   $G_{\sigma}$ is isomorphic to the group of isometries of the metric space $[\bb]$ (with metric induced by the norm from $\mpair{-,-}$).

\subsection{} Suppose that $\Phi$ is crystallographic.  If $\bb\in \Phi^{n}$ is an 
ordered system of rank $n$ consisting  of roots in $\Phi$, its \emph{Cartan genus} is 
the matrix  of inner products 
$C''(\bb):=(\mpair{\ck \b_i, \b_{j}})_{i,j=1}^{n}$. This  defines 
a $\Sym_{n}$-equivariant  map 
$C''\colon \Phi^{n}\to \CG^{(n)}$.
Again, one has $\b_{i}=\b_{j}$ if and only if the $i$-th and $j$-th columns of $C''(\bb)$ are equal.
Note that $\bb$ is an ordered simple system if and only if  its Cartan genus $C''(\bb)$ is a  Cartan matrix.
 Similarly, $\bb$ is an ordered np subset of rank $n$, 
if and only if $C''(\bb)$ is a generalised Cartan matrix  
(which  then necessarily  has only finite and affine  components). 
  Clearly,  the Cartan genus $C''(\bb)$ of $\bb\in \Phi^{n}$ is completely determined by the Gram genus $C'(\bb)$.
\begin{rem}
If $\bb$ is an ordered simple system in $\Phi$, the automorphism group (see Definition \ref{def:genus}) of
$C''(\bb)$ is known as the group of {\it diagram automorphisms} of  $\Phi$.
\end{rem}

\subsection{} \label{ss:genus2} In \ref{ss:genus2}--\ref{rem:genus}, fix  a subgroup $W'$ of $\mathrm{O}_{V}$ and a $W'$-stable subset $\Psi$ of $V$. The main situation of  interest is that  in which $(W',\Psi)=(W,\Phi)$, but other cases such as when $(W',\Psi)$ is   $(W,V)$ or $(\mathrm{O}_{V},V)$ are also  of  interest. 
  Fix a natural number $n$. Let 
\begin{equation}
\binom{\Psi}{n}:=
  \mset{\Gamma\subseteq \Psi\mid \vert \Gamma\vert =n}\subseteq \CP(\Psi)
\end{equation} 
be the ``configuration space'' of $n$ distinct 
  unordered points of $\Psi$. This has a natural $W'$ 
  action given  by $(w,\Gamma)\mapsto w(\Gamma):=
  \mset{w\gamma\mid \gamma\in \Gamma}$ for
   $w\in W'$ and $\Gamma\in \binom{\Psi}{n}$. 
   
Our main interest will be the 
  study of  $W'$-orbits on $\binom{\Psi}{n}$. With this in mind, define the configuration space 
\begin{equation*}
\Psi^{(n)}:=\mset{\bb=(\b_{1},\ldots, \b_{n})\in \Psi^{n}
\mid \b_{i}\neq \b_{j}\text{ \rm  if $i\neq j$}}
\end{equation*} 
of $n$ ordered distinct points of $\Psi$. 
Then $\Psi^{(n)}$ admits the  diagonal $W'$-action and a commuting  $\Sym_{n}$-action by place permutations, hence  a natural $W'\times \Sym_{n}$-action. 
Moreover, there is  a natural $W'$-equivariant 
surjection $\hat \pi:\bb\mapsto [\bb]\colon \Psi^{(n)}\to  
\binom{\Psi}{n}$. The 
fibres of $\hat \pi$ are 
the $\Sym_{n}$-orbits on $\Psi^{(n)}$, and $\Sym_{n}$ acts simply transitively on each fibre. 
As in \eqref{eq:prodorb}, we may canonically identify the set of $W'$-orbits on $\binom{\Psi}{n}$ as 
\begin{equation}\label{Wconfig}
{\textstyle\binom{\Psi}{n}}/W'\cong
(\Psi^{(n)}/\Sym_{n})/W'\cong  \Psi^{(n)}/(W'\times \Sym_{n})\cong  (\Psi^{(n)}/W')/\Sym_{n}.
\end{equation}

\subsection{} There is a natural $\Sym_{n}$-equivariant map $\bb\mapsto C'(\bb)\colon \Psi^{n}\to \CG^{(n)}$ which assigns to $\bb\in \Psi^{n}$ its Gram genus. 
As a variant in the case $(W',\Psi)=(W,\Phi)$, one can
 consider instead the map $\bb\mapsto C''(\bb)\colon \Psi^{n}\to \CG^{(n)}$
  which assigns to  $\bb$ its Cartan genus; $C''$ is also $\Sym_n$-equivariant.  Let $C\colon \Psi^{n}\to \CG^{(n)}$
be one of  the  maps $C'$ or $C''$. Let $\CG^{(n)}_{0}$ 
  be the subset of $\CG^{(n)}$ consisting of matrices 
  with pairwise distinct columns. 
 As remarked already, for $\bb=(\b_{1},\ldots, \b_{n})\in \Psi^{n}$, one has 
$\bb\in \Psi^{(n)}$ (i.e. $\b_{i}\neq \b_{j}$ for $i\neq j$) if and only if 
one has $C(\bb)\in \CG^{(n)}_{0}$.

 For a genus $\s$ of rank $n$ and type 
$\tau:=I(\s)$, we write $\CS(\s):=C^{-1}(\s)$ for the 
fibre of $C$ over $\s$
and \begin{equation*}
\CT(\tau):=\mset{\bb\in \Psi^{(n)}\mid C(\bb)\in \tau}=\bigcup_{\rho\in \Sym_{n}}\rho(\CS(\s))
\end{equation*} for 
the union of the fibres of $C$ over  all genera of the same 
type as $\s$.  Let \begin{equation*}
\CU(\tau):=\mset{[\bb]\mid \bb\in 
\CT(\tau)}=\mset{[\bb]\mid \bb\in 
\CS(\s)}\subseteq \CP(\Psi).
\end{equation*}
One has $\CU(\tau)\subseteq  \binom{\Psi}{n}$ if and only if $\s\in \CG^{(n)}_{0}$ (or equivalently, $\tau
\subseteq \CG^{(n)}_{0}$).  If it is  necessary to indicate dependence on $C$, we write $\CS_{C}(\tau)$ etc. 

For example, if $(W',\Psi,C)=(W,\Phi,C)$ and  $\tau$ is a type of Cartan matrices,
 then
$\CT(\tau)$ (resp., $\CU(\tau)$) is the set of all  ordered 
(resp., unordered) simple subsystems of $\Phi$ of that type. Similarly if $\tau$ is a type of generalised Cartan matrices, then $\CT(\tau)$ (resp., $\CU(\tau)$) is the set of ordered (resp., unordered) np subsets of that type (this set being empty unless all components of $\tau$ are of finite or affine type). 

In general, each  set $\CS(\s)$, $\CT(\tau)$ and $\CU(\tau)$ is $W'$-stable.  In particular,  the classification of $W'$-orbits on $\binom{\Psi}{n}$ is   reduced to the classification of $W'$-orbits on $\CU(\tau)$ for each type of  genus $\tau\subseteq \CG^{(n)}_{0}$ (with $\CU(\tau)\neq \eset$).  We describe an approach to this classification  along lines 
similar to \eqref{Wconfig},
  by study of $\CS(\s)$ and $\CT(\s)$

\subsection{} \label{ss:genus3} Let $\s\in \CG^{(n)}$ and
$\tau:= I(\s)$.
One has a commutative diagram\begin{equation*}
\xymatrix{{\CS(\s)}\ar@{^(->}[r]\ar[d]_{\pi_{\s}}&{\CT(\tau)}\ar[d]^{\pi}\\
{\CU(\tau)}\ar@{=}[r]&{\CU(\tau)}
}
\end{equation*}
 of $W'$-sets  
in which the top horizontal map is an inclusion, and  $\pi$ and $\pi_{\s}$ are the indicated restrictions of the map $\hat\pi\colon \bb\mapsto [\bb]$.   Since $C$ is 
$\Sym_{n}$-equivariant and 
$\tau=I(\s)$ is the $\Sym_{n}$-orbit of $\s$, $\Sym_{n}$ acts naturally on $\CT(\tau)$, commuting with its $W'$-action. In this way, $\CT(\tau)$ acquires a natural  structure of $W'\times \Sym_{n}$-set. By restriction, $\CS(\s)$ has a natural structure of  $W'\times G_{\sigma}$-set.
This $W'\times G_{\s}$-set $\CS(\s)$ depends only on the type of $\s$ up to natural identifications. More precisely, 
let $\s'=(a_{i,j}')_{i,j=1,\ldots, n}$ be another genus of  type $\tau$, say $\s'=\rho\s$  where $\rho\in \Sym_{n}$. Then $G_{\s'}=\rho G_{\s}\rho^{-1}$ and $\CS(\s')=\rho \CS(\s)$; in fact, the map
$ p\colon  \CS(\s)\to \CS(\s')$ given by $\bb\mapsto \rho \bb$ is a  $W'$-equivariant bijection and satisfies  $p(\nu\bb)=(\rho\nu\rho^{-1})p(\bb)$ for $\bb\in \CS(\bb)$ and $\nu\in G_{\s}$.

\subsection{} Assume henceforth that $\s\in \CG^{(n)}_{0}$, so
 $\tau\seq \CG^{(n)}_{0}$.
Then the 
$\Sym_{n}$-orbits on  $\CT(\tau)$ (resp, $G_{\s}$-orbits on $\CS(\s)$) are precisely the fibres of $\pi$ (resp., $\pi_{\s}$) and  $\Sym_{n}$ (resp., $G_{\s}$)
acts simply transitively on each fibre of $\pi$ (resp., $\pi_{\s})$. 
(There is  even  a natural isomorphism of $\mathrm{W'}\times \Sym_{n}$-sets $\Sym_{n}\times^{G_{\s}}\CS(\s)\cong \CT(\t)$.)  
Hence we may  naturally identify 
$\CU(\tau)\cong \CT(\tau)/\Sym_{n}\cong \CS(\s)/G_{\s}$ as $\mathrm{W'}$-sets. From
 \eqref{eq:prodorb}, one gets canonical identifications\begin{equation}\label{eq:orbits}\CU(\tau)/W'\cong (\CT(\tau)/W')/
\Sym_{n}\cong (\CS(\s)/W')/
G_{\s}.
 \end{equation}

 In the case $(W',\Psi,C)=(\mathrm{O}_{V},V,C')$, each non-empty set $\CS(\s)$ for $\s\in \CG^{(n)}_{0}$ is a single $W'$-orbit on $\Psi^{(n)}$ (see \ref{ss:genus1}) and thus $\CU(\tau)$ is a $W'$-orbit on $\binom{\Psi}{n}$. The above 
 equations and the discussion of \ref{ss:genus1} therefore  give rise to the following closely related parameterisations of the set of 
 $\mathrm{O}_{V}$-orbits of unordered sets of $n$ (distinct) points in $V$: the set of such orbits  corresponds bijectively to the set of   symmetric, 
 positive semidefinite real $n\times n$ matrices with distinct columns, 
 modulo simultaneous row and column permutation
 (resp., to a set of representatives of such matrices under such permutations).

\begin{remark}\label{rem:genus} For any groups $G$, $H$ there is a natural analogue in the category of $G$-sets of the standard notion of principal $H$-bundle.  Transferring standard terminology from the  theory of principal bundles to this setting,    the above shows that (assuming  $\s\in \CG^{(n)}_{0}$), $\wh \pi$ and  $\pi$ 
     are     principal $\Sym_{n}$-bundles and $\pi_{\s}$ is a 
     principal $G_{\s}$-bundle which affords a  reduction of the structure group of the  bundle $\pi$ from $\Sym_{n}$ to $G_\s $.
     Moreover,  the bundle $\pi_{\s}$ depends 
(up to natural identifications of the various possible structure groups $G_{\s}$ induced by inner automorphisms of $\Sym_{n}$) only on the type of $\s$ and not on $\s$ itself. 
    Simple examples show that in general, these bundles are not trivial bundles.   \end{remark}

\subsection{}\label{ss:genus4} From now on,  we restrict to the case $W'=W$, which is of primary interest here. 
Recall the definition of the  dot action of $\Sym_{n}$ on the fundamental region $\CC_{W}^{(n)}$: for  $\rho\in \Sym_{n}$ and $\bb\in \CC^{(n)}_{W}$, one has $\set{\rho \cdot \bb}= W\rho \bb\cap\CC_{W}^{(n)}$.  Recall also 
the maps $C',C''$ which associate to an $n$-tuple of roots its associated Gram genus and Cartan genus respectively.

  \begin{prop}\label{prop:conjreps} Let $\s\in \CG^{(n)}_{0}$ and $\t:=I(\s)$.
 The  set  
$\CU(\tau)/W$ of $W$-orbits on $\CU(\tau)$ may be
canonically identified with $\CS(\s)/G_{\s}$ where $\CR_C(\s)=\CR(\s):=\CS(\s)\cap \CC^{(n)}_{W}$  (resp.,  with
$ (\CT(\tau)\cap \CC^{(n)}_{W})/
\Sym_{n}$)
 and the  $G_{\s}$ (resp., $\Sym_{n}$-action) used to form the coset space is a restriction of  the dot action.
  \end{prop}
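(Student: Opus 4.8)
The plan is to obtain both identifications as instances of Proposition \ref{prop:genclass}, applied once to the $W\times\Sym_n$-set $\CT(\t)$ and once to the $W\times G_\s$-set $\CS(\s)$, and then to translate the conclusions back to $\CU(\t)/W$ via \eqref{eq:orbits}. Preliminary to this I would record: (1) since $\Psi=\Phi\seq V$, both $\CS(\s)=C^{-1}(\s)$ and $\CT(\t)$ are subsets of $\Psi^{(n)}\seq V^n$; (2) $\CS(\s)$ is $W\times G_\s$-stable and $\CT(\t)$ is $W\times\Sym_n$-stable — $W$-stability holds because $C$ ($=C'$ or $C''$) is constant on $W$-orbits, $W$ acting by isometries, and stability under $G_\s\le\Sym_n$, resp.\ $\Sym_n$, is immediate from the $\Sym_n$-equivariance of $C$ together with $G_\s=\stab_{\Sym_n}(\s)$; (3) because $\s\in\CG^{(n)}_0$, the group $\Sym_n$ (resp.\ $G_\s$) acts simply transitively on each fibre of $\pi$ (resp.\ $\pi_\s$) — see the discussion around \eqref{eq:orbits} — so that $\CU(\t)\cong\CT(\t)/\Sym_n\cong\CS(\s)/G_\s$ as $W$-sets and hence, by \eqref{eq:orbits}, $\CU(\t)/W\cong(\CT(\t)/W)/\Sym_n\cong(\CS(\s)/W)/G_\s$.

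Granting this, I would apply Proposition \ref{prop:genclass} with $\wh\CS=\CT(\t)$ and $G=\Sym_n$. Part (a) gives a canonical bijection $\CT(\t)\cap\CC^{(n)}_W\xrightarrow{\cong}\CT(\t)/W$, and parts (c)--(d) say that the $W$-orbits on $\CT(\t)/\Sym_n\cong\CU(\t)$ are in canonical bijection with the quotient of $\CT(\t)\cap\CC^{(n)}_W$ by the dot $\Sym_n$-action, which — being characterised by $\{\r\cdot\bb\}=W\r\bb\cap\CC^{(n)}_W$ — is exactly the restriction of the global dot action $\cdot_n$ of $\Sym_n$ on $\CC^{(n)}_W$ to its $\Sym_n$-invariant subset $\CT(\t)\cap\CC^{(n)}_W$. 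This gives the ``resp.''\ assertion. Repeating the argument verbatim with $\wh\CS=\CS(\s)$ and $G=G_\s$ yields a canonical bijection $\CR(\s):=\CS(\s)\cap\CC^{(n)}_W\xrightarrow{\cong}\CS(\s)/W$ and identifies the $W$-orbits on $\CS(\s)/G_\s\cong\CU(\t)$ with $\CR(\s)$ modulo the restriction of the dot action to the subgroup $G_\s\le\Sym_n$ and to the invariant subset $\CR(\s)$; combined with $\CU(\t)/W\cong(\CS(\s)/W)/G_\s$ from the first paragraph, this is the remaining assertion.

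I do not anticipate a genuine obstacle: beyond the (routine) checks in the first paragraph, the argument is bookkeeping with canonical bijections. The one point deserving care is that the dot $G_\s$-action on $\CR(\s)$ produced abstractly by Proposition \ref{prop:genclass}(c) — defined by transporting the $G_\s$-action on $\CS(\s)/W$ through the bijection of part (a) — really is the restriction of the geometric action $\cdot_n$. This follows from the two descriptions $\{\r\cdot\bb\}=W\r\bb\cap\CC^{(n)}_W$ and $\r\cdot\bb=\max(W\r\bb)$ in Proposition \ref{prop:genclass}(c) (using Corollary \ref{cor:fund}), since both depend only on the $W$-orbit of $\r\bb$ and on the fixed fundamental region $\CC^{(n)}_W$, not on the ambient $\wh\CS$.
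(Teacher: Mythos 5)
Your proof is correct and follows essentially the same route as the paper: the paper's own proof also derives the statement from \eqref{eq:orbits} with $W'=W$, identifies $\CT(\tau)/W$ and $\CS(\s)/W$ with their intersections with $\CC^{(n)}_W$ via Theorem \ref{thm:fundreg} (i.e.\ Proposition \ref{prop:genclass}(a)), and observes that the induced $\Sym_n$- and $G_\s$-actions are restrictions of the dot action as in Proposition \ref{prop:genclass}(c)--(d). Your version merely spells out the stability checks and the transport of the dot action that the paper leaves implicit.
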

 \begin{proof}
 This follows immediately from 
 \eqref{eq:orbits} (with $W'=W$) on identifying $\CT(\tau)/W$ with $\CC_{W}^{(n)}\cap \CT(\tau)$ and 
$\CS(\s)/W$ with $\CC_{W}^{(n)}\cap\CS(\s)$ (the corresponding actions by $\Sym_{n}$ and $G_{\s}$ identify with restrictions of the dot action). 
 \end{proof}

     \subsection{}  We now    record some consequences  of the  bijection $\CU(\tau)/W\cong \CR(\s)/G_{\s}$ in the case $(W',\Psi)=(W,\Phi)$, for the classification of $W$-orbits of simple subsystems of  $\Phi$; similar results hold for $W$-orbits of  arbitrary subsets of roots (or indeed, of   vectors in $V$).   For any $\bb\in \Phi^{(n)}$, we call $I(C'(\bb))$
     (resp., $I(C''(\bb))$)  the \emph{Gram type} (resp., \emph{Cartan type})  of $\bb$ and of $[\bb]\in \binom{\Phi}{n}$.   Note that specifying the Gram type of a
     (unordered or ordered) simple system  amounts to   specifying its Cartan type (i.e. its Cartan matrix up to reindexing) together with the function taking each irreducible component to the maximal root length in that component. 
     
     By a Cartan (resp., Gram) genus of simple systems we mean the Cartan (resp.,  Gram) genus of some ordered simple system of some crystallographic (resp., arbitrary) finite root system.  Thus, a Cartan genus of ordered simple systems  is just a Cartan matrix $(a_{i,j})_{i,j=1}^{n}$ (with a specified indexing by $1,\ldots, n$).

%      As observed above,  the automorphism group $G_\s $ acts 
% naturally on the set  $\CS(\s)$ of all simple systems of genus $\s$. 
% There is then an induced dot $G_\s $-action on $\CR(\s):=\CS(\s)
% \cap \CC^{(n)}_{W}$ as in Proposition \ref{prop:genclass}. 
% The following result is the key to the use of the  results contained in Section \ref{s1} in studying 
%$W$-classes of subsystems.

 \begin{prop}\label{prop:genconj} Let $\s$ be a Cartan (resp., Gram) genus of  ordered simple systems
 and $C:=C''$ (resp., $C:=C'$). Let   $\tau:=I(\s)$.  \begin{num}
 \item  There is a natural bijection between $W$-orbits of simple systems of Cartan (resp., Gram) type $\tau$ in $\Phi$ and $G_\s $-orbits for the dot action on $\CR_{C}(\s)$.
  \item The conjugacy classes of simple subsystems of Cartan (resp., Gram) type $\tau$  are exactly the  conjugacy classes of simple systems $[\bb]$ for $\bb\in \CR_{C}(\s)$. 
 \item The number of $W$-orbits of simple systems of Cartan (resp., Gram) type $\tau$ in $\Phi$ is at most the number  $\vert \mset{[\bb]\mid \bb\in \CR_{C}(\s)}\vert$
 of simple systems $[\bb]$  which  underlie some  ordered simple system $\bb$ in $\CR_{C}(\s)$.
 \item If $\vert \CR_{C}(\s)\vert=1$, or, more generally, $\CR_{C}(\s)\neq \eset$ and all ordered simple systems $\bb$ in $\CR_{C}(\s)$ have the same underlying simple system $[\bb]$, then there is a single $W$-conjugacy class of simple systems in $\Phi$ of Cartan (resp., Gram) type $\tau$.
    \end{num}
  \end{prop}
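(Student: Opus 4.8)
The plan is to read off all four parts from Proposition~\ref{prop:conjreps} (equivalently, from~\eqref{eq:orbits} together with Proposition~\ref{prop:genclass}), after translating the objects $\CU(\tau)$, $\CS(\s)$ and $\CR_C(\s)$ into the language of simple subsystems; I would treat the Cartan case ($C=C''$) explicitly, the Gram case ($C=C'$) being identical once one recalls that the Gram genus determines the Cartan genus. First I would record, using the observations collected around~\ref{ss:genus2}, that $\s$, being a Cartan genus of an ordered simple system, has pairwise distinct columns, so $\s\in\CG^{(n)}_0$; that every $\bb\in\CS(\s)$ is then an \emph{ordered} simple subsystem of $\Phi$ of Cartan type $\tau=I(\s)$; and hence that $\CU(\tau)=\mset{[\bb]\mid\bb\in\CS(\s)}$ is exactly the set of (unordered) simple subsystems of $\Phi$ of type $\tau$, with $W$-orbits on $\CU(\tau)$ being precisely the $W$-conjugacy classes of such subsystems. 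Part~(a) is then just the bijection $\CU(\tau)/W\cong\CR_C(\s)/G_\s$ (for the restricted dot action) of Proposition~\ref{prop:conjreps}, read through this dictionary. For part~(b): by Proposition~\ref{prop:genclass}(a) the inclusion $\CR_C(\s)=\CS(\s)\cap\CC^{(n)}_W\hookrightarrow\CS(\s)$ induces a bijection $\CR_C(\s)\xrightarrow{\sim}\CS(\s)/W$, which, composed with the quotient $\CS(\s)/W\twoheadrightarrow(\CS(\s)/W)/G_\s\cong\CU(\tau)/W$ coming from~\eqref{eq:orbits}, shows that every $W$-conjugacy class of simple subsystems of type $\tau$ is the class of $[\bb]$ for some $\bb\in\CR_C(\s)$ --- which is exactly assertion~(b).

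For part~(c) I would invoke Proposition~\ref{prop:genclass}(e) with $\wh\CS:=\CS(\s)$ and $G:=G_\s$ (this set is $W\times G_\s$-stable by \ref{ss:genus2}--\ref{ss:genus3}), for which $\wh\CR=\CR_C(\s)$ and, by the first paragraph, the number of $W$-orbits on $\CS(\s)/G_\s$ equals the number of $W$-conjugacy classes of simple subsystems of type $\tau$ in $\Phi$. Proposition~\ref{prop:genclass}(e) bounds this number by $\vert\eta(\CR_C(\s))\vert$, where $\eta\colon\CS(\s)\to\CS(\s)/G_\s$ is the quotient for the \emph{ordinary} (place-permutation) $G_\s$-action. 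It then remains to identify $\vert\eta(\CR_C(\s))\vert$ with $\vert\mset{[\bb]\mid\bb\in\CR_C(\s)}\vert$, i.e.\ to show that two elements $\bb,\bb'$ of $\CR_C(\s)$ lie in one ordinary $G_\s$-orbit exactly when $[\bb]=[\bb']$: the implication $\Rightarrow$ is clear since a place permutation does not alter the underlying set, and for $\Leftarrow$ one notes that, $\bb$ and $\bb'$ being tuples of pairwise distinct roots, there is a unique $\rho\in\Sym_n$ with $\bb'=\rho\bb$, and $\Sym_n$-equivariance of $C$ then forces $\rho\s=C(\bb')=\s$, i.e.\ $\rho\in G_\s$. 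Thus the fibres of $\eta|_{\CR_C(\s)}$ coincide with those of $\bb\mapsto[\bb]$, and (c) follows.

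Finally, part~(d) is a formal consequence of (b) and (c): if $\CR_C(\s)\neq\eset$ and all $\bb\in\CR_C(\s)$ share the same underlying set (in particular if $\vert\CR_C(\s)\vert=1$), then $\vert\mset{[\bb]\mid\bb\in\CR_C(\s)}\vert=1$, so (c) gives at most one $W$-conjugacy class of simple subsystems of type $\tau$ in $\Phi$, while (b) gives at least one (the class of $[\bb]$ for any $\bb\in\CR_C(\s)$); hence exactly one. Essentially every step is a direct transcription of Propositions~\ref{prop:conjreps} and~\ref{prop:genclass}; the one place that calls for care is the orbit bookkeeping in (c) --- keeping the \emph{ordinary} place-permutation action and the \emph{dot} action of $G_\s$ on $\CR_C(\s)$ apart, and using equivariance of $C$ to match the fibres of $\eta|_{\CR_C(\s)}$ with those of $\bb\mapsto[\bb]$.
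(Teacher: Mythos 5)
Your proposal is correct and follows essentially the same route as the paper: part (a) is read off from Proposition \ref{prop:conjreps}, part (b) from moving an ordered simple system of genus $\s$ into the fundamental region, and parts (c)--(d) are counting consequences. The only cosmetic difference is that you derive (c) via Proposition \ref{prop:genclass}(e) and a fibre comparison for $\eta|_{\CR_C(\s)}$, whereas the paper obtains it directly from (b) by noting that $[\bb]\mapsto W[\bb]$ surjects onto the set of conjugacy classes; both arguments are sound.
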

  \begin{proof}  Since  $\CS_{C}(\s)$ is the set of ordered  simple systems of $\Phi$ with Cartan (resp.,
   Gram) matrix $\s$, part (a) follows from Proposition 
   \ref{prop:conjreps}.  We prove (b).  Note that if $\D$ is a simple subsystem of Cartan (resp.  Gram) type 
      $\tau$, then $\Delta=[\bb]$ for some $\bb\in \CS(\s)$. We have  $w\bb\in \CR(\s)$ for some $w\in W$. 
      Thus, $w\Delta=[w\bb]$. So the $W$-orbit of any 
      simple subsystem $\Delta$ of Cartan (resp., Gram) type $\tau$ has a 
      representative $w\Delta$ in $\CR_{C}(\s)$. On the other hand,  for any $\bb\in \CR(\s)$,  one has $[\bb]\in \CT(\tau)$. Part (b) follows. 
Part (c) follows directly from (b), and part (d) follows from (c).
     \end{proof}
     \begin{remark} For fixed $\Phi$ and any Cartan genus $\s$ of ordered simple systems of type $\tau$, 
there are finitely many Gram genera $\s_{1},\ldots, \s_{n}$ of types $\tau_{1},\ldots, \tau_{n}$ respectively, such that
     $\CS_{C''}(\s)=\disjun_{i=1}^{n}\CS_{C'}(\s_{i})$,
     $\CR_{C''}(\s)=\disjun_{i=1}^{n}\CR_{C'}(\s_{i})$
     $\CT_{C''}(\t)=\cup_{i=1}^{n}\CT_{C'}(\t_{i})$ and
     $\CU_{C''}(\t)=\cup_{i=1}^{n}\CU_{C'}(\t_{i})$.
     Gram genera are convenient in that there are typically fewer simple systems of a specified Gram genus than a corresponding Cartan genus; 
on the other hand, it may be inconvenient  to specify the several Gram genera necessary to describe all (ordered) simple systems of a specified 
Cartan genus (e.g.  an isomorphism type of simple subsystems).      
     \end{remark}

% 
%\begin{remark} Suppose that $\sigma$ is a genus  which is of the 
%  same type as a simple system  $\Delta$. An assignment of $\Delta$ to 
%  $\sigma$ is defined to be a bijection $f\colon \set{1, \ldots,\vert \Delta\vert}  \to \Delta$
%  such that $(f(1),\ldots, f(\vert \Delta\vert)\in \CS(\s)$.
% Define a  marking of $\Delta$ to be an assignment to $\Delta$   of   additional data of the following types (1)--(2): (1)  for each  isomorphism class 
%$X$  of finite crystallographic root 
%system,  a total ordering of the set of components of $\Delta$ 
%of type $X$ (2) for each component $\Delta'$ of $\Delta$ with non-trivial automorphism group $G_{\Delta'}$ (of order $(k+1)!$, say)
%an injective function $\set{1,\ldots, k}\to O$  where $O\subseteq \D'$ is the (unique) $G_{\Delta'}$ orbit of cardinality $k+1$ on the  set of terminal vertices of the diagram of $\Delta'$.   One may establish for fixed $\Delta$  a (non-canonical) bijection between the set  of assignments of $\Delta$ to the genus $\s$ and the sets of markings of $\Delta$.\end{remark}   

As an application, we shall
show that if $L$ is  the length of some root of $\Phi$,  there is a unique conjugacy class of simple subsystems of Gram type $I(L\cdot \Id_{n})$ 
with $n$ maximal. 

\begin{prop}\label{prop:simply}
Let  $L\in \real_{>0}$. Consider simple subsystems $\Delta$ of $\Phi$ satisfying the three conditions below: 
\begin{conds}
\item $\Delta$ is of type $A_{1}^{n}$ for some $n$. 
\item All roots in $\Delta$ have the same length $L$.
\item $\Delta$ is inclusion maximal subject to 
{ \rm (i)} and { \rm (ii)}.
\end{conds} Then any two  subsystems  of $\Phi$ 
satisfying { \rm (i)--(iii)}  are conjugate under $W$.
\end{prop}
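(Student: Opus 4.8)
The plan is to bypass most of the configuration‑space formalism and prove the statement directly, by induction on $\dim(\real\Phi)$. First I would record the translation: by the description of simple subsystems in \S\ref{diag}, a simple subsystem of type $A_1^n$ is exactly a set of $n$ pairwise orthogonal roots (orthogonality, $\mpair{\a,\b^{\vee}}=0$, is the defining relation of the diagram $A_1^n$, and pairwise orthogonal nonzero vectors are automatically linearly independent); condition (ii) then says these roots lie in $\Phi_L:=\mset{\a\in\Phi\mid\mpair{\a,\a}=L}$. Since each $s_\a$ is an isometry carrying $\Phi$ to itself, $\Phi_L$ is a root subsystem of $\Phi$ with $W(\Phi_L)\le W$, and $\Delta$ satisfies (i)--(iii) for $\Phi$ if and only if it is an inclusion‑maximal set of pairwise orthogonal roots of $\Phi_L$; a $W(\Phi_L)$‑conjugacy is a fortiori a $W$‑conjugacy. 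So I may assume $\Phi=\Phi_L$, i.e. all roots of $\Phi$ have a single length, hence every irreducible component of $\Phi$ is simply laced.

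Next I would reduce to the irreducible case. Writing $\Phi=\Phi^{(1)}\disjun\dots\disjun\Phi^{(k)}$ for the decomposition into irreducible components in pairwise orthogonal subspaces, one has $W=W(\Phi^{(1)})\times\dots\times W(\Phi^{(k)})$ acting componentwise; any root lies in a unique $\Phi^{(j)}$, and a set $\Gamma$ of pairwise orthogonal roots is $\disjun_j(\Gamma\cap\Phi^{(j)})$, so $\Gamma$ is inclusion‑maximal iff each $\Gamma\cap\Phi^{(j)}$ is inclusion‑maximal in $\Phi^{(j)}$. Hence if the result holds for each component, two maximal $\Gamma,\Gamma'$ have $\Gamma\cap\Phi^{(j)}$ conjugate to $\Gamma'\cap\Phi^{(j)}$ under $W(\Phi^{(j)})$ for each $j$, and assembling these gives $\Gamma'=w(\Gamma)$ for some $w\in W$. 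So I may also assume $\Phi$ irreducible (with all roots of length $L$).

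Now the induction. If $\Phi$ has no root the statement is vacuous. Otherwise, an irreducible root system with all roots of one length has a unique highest root $\th$ (with respect to $\Phi_+$), which is its unique dominant root (Lemma~\ref{lem:fundcham} and \cite[Lemma~2]{DyLeRef}); by Lemma~\ref{lem:fundcham}(a) every root is $W$‑conjugate to a dominant root, so $W$ acts transitively on $\Phi$. Given a maximal $\Delta$, pick $\b\in\Delta$ and $w\in W$ with $w\b=\th$; then $w\Delta$ is again a maximal set of pairwise orthogonal roots and contains $\th$, so after replacing $\Delta,\Delta'$ by suitable translates I may assume $\th\in\Delta$ and $\th\in\Delta'$, say $\Delta=\set{\th}\disjun\Delta_1$ and $\Delta'=\set{\th}\disjun\Delta_1'$ with $\Delta_1,\Delta_1'\seq\Phi'':=\Phi\cap\th^{\perp}$. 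By \S\ref{refsubgp} and Lemma~\ref{lem:fundcham}, $\Phi''$ is the root system of the parabolic subgroup $\Stab_W(\th)$, and $\real\Phi''\sneq\real\Phi$ since $\th\in\real\Phi\sm\th^{\perp}$. Any root of $\Phi''$ orthogonal to all of $\Delta_1$ is (being in $\th^{\perp}$) orthogonal to all of $\Delta$, so $\Delta_1$ is inclusion‑maximal among sets of pairwise orthogonal roots of $\Phi''$ — and similarly $\Delta_1'$. Applying the inductive hypothesis to $\Phi''$ (which has strictly smaller span, and to which Steps 1--2 reduce the assertion for any root system of that span) yields $v\in W(\Phi'')$ with $v\Delta_1=\Delta_1'$; as $W(\Phi'')\le\Stab_W(\th)$, $v$ fixes $\th$, so $v\Delta=\set{\th}\disjun\Delta_1'=\Delta'$. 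In particular $|\Delta|=|\Delta'|$, and $\Delta,\Delta'$ are $W$‑conjugate, completing the induction.

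The one point that needs care is the choice of induction parameter: passing to $\th^{\perp}$ turns an irreducible $\Phi$ into a possibly \emph{reducible} $\Phi''$, so the induction must be on $\dim(\real\Phi)$, with the two reductions reapplied at each stage, rather than on $n$ or on irreducible systems alone; everything else is routine bookkeeping, using only that $W$ acts by isometries, Lemma~\ref{lem:fundcham}, and the uniqueness of the highest root of an irreducible root system with a single root length. (Alternatively one could deduce the result from Proposition~\ref{prop:genconj}(d) after checking, by the same recursion, that the fundamental‑domain representatives of maximal $A_1^n$ systems of constant root length are suitably rigid, but the direct argument above seems shortest.)
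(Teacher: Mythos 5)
Your argument is correct, but it takes a genuinely different route from the paper's. The paper stays inside its fundamental-domain formalism: it describes \emph{all} ordered tuples in $\CC^{(n)}_W$ satisfying (i)--(ii) by the recursion ``take a dominant root of length $L$ of some component, delete it and its neighbours from the affine diagram, repeat,'' observes that under (iii) the resulting underlying set is independent of the choices made, and then invokes Proposition \ref{prop:genconj}. You instead reduce to $\Phi=\Phi_L$ irreducible and run an induction on $\dim(\real\Phi)$, conjugating both sets so as to contain the highest root $\th$ and recursing into $\Phi\cap\th^{\perp}$ with the stabiliser parabolic. The two recursions are really the same one ($\th$, then $\th^{\perp}$), but your version constructs the conjugating element explicitly and thereby replaces the paper's assertion that ``$[\bb]$ is clearly independent of the order in which these operations are done'' by an actual induction -- arguably the cleaner treatment of the one point where the paper's proof is thin -- at the cost of not producing the explicit fundamental-domain representatives that the paper's method yields and that feed into \S\ref{sec:arbtype}. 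Two small caveats. First, your reduction step leans on the fact that an irreducible root system all of whose roots have length $L$ has a \emph{unique} dominant root, hence a transitive $W$-action on roots; this is true for crystallographic $\Phi$ (the components of $\Phi_L$ are then simply laced) but fails for $I_2(2k)$ realised with all roots of equal length, where there are two dominant roots of the same length and the proposition itself fails (e.g.\ $\set{e_1,e_2}$ and its $45^\circ$ rotate in $I_2(4)$ are both maximal orthogonal pairs but lie in different $W$-orbits). This is a restriction the paper's proof shares (it uses affine Dynkin diagrams and the same uniqueness of the dominant root of a given length), so it is not a gap relative to the paper, but you should state the crystallographic hypothesis explicitly. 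Second, your phrase ``$\Phi_L$ is a root subsystem of $\Phi$'' is slightly loose in the paper's terminology of \S\ref{refsubgp} (it is not obviously of the form $\Phi_{W'}$); all you need, and all you use, is that $\Phi_L$ is a root system in its own right with $W(\Phi_L)\leq W$, so this is cosmetic.
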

\begin{proof} Following Proposition
\ref{prop:genclass}, we begin by describing
all the ordered simple subsystems 
$\bb=\mset{\b_1,\dots,\b_n}$  in 
$\CC^{(n)}_W$ satisfying (i)--(ii).  
First, $\b_1$ is a dominant root of 
length $L$ of some component of $\Phi$.
The possibilities for $\b_2$ are identified by deleting 
from the affine Dynkin diagram
of $\Pi\disjun\{-\b_1\}$ the root $-\b_1$ and all roots 
attached to $-\b_1$. One then 
obtains a simple system for a  new root (sub)system 
and repeats the above process. At each stage, the only
 choice
is in which component (containing roots of length $L$)
 of the 
relevant root system one takes the dominant root of 
length $L$.
The condition (iii) will be satisfied if and only if 
 none of the components at that 
stage contain roots of length $L$. 
By the maximality assumption (iii),    the integer $n$ 
and the set $[\bb]=\mset{\b_1,\dots,\b_n}$ is clearly 
independent of the order in which these operations
 are done. 

This implies that all  ordered simple systems which satisfy (i)--(iii) and which lie  in the fundamental region  are (certain) permutations of each other. 
It follows by Proposition
\ref{prop:genconj}(c),  there is just one conjugacy class of unordered subsystems satisfying (i)--(iii).
\end{proof}

  \subsection{Action of the longest element}\label{aut}   For any simple subsystem $\Delta$ of   
  $\Phi$, we denote by $\omega_{\Delta}$ the longest element of the 
  Coxeter system $(W_{\Delta},S_{\Delta})$
  where $S_{\Delta}=\mset{s_{\alpha}\mid \alpha\in \Delta}$ and  
  $W_{\Delta}:=\mpair{S_{\Delta}}$. It is well known that there is 
  some permutation $\rho_{\Delta}$ of $\Delta$ (which may and often  will be regarded as  a diagram automorphism of the Dynkin diagram of $\Delta$) such that $\omega_{\Delta}(\alpha)=-\rho_{\Delta}(\alpha)$ for all $\a\in \Delta$.
  In case $\Delta=\Pi$, we write $\omega_{\Pi}=\o$ and $\rho_{\Pi}=\rho$. Extending the permutation action on $\Pi$ linearly, we
  sometimes  regard $\rho_{\Pi}$ as an automorphism of $\Phi$ which fixes the set $\Pi$. If $\bb$ is an ordered simple subsystem of $\Phi$, then $[\bb]$ 
  is a simple subsystem of $\Phi$ and so $\o_{\bb}:=\o_{[\bb]}$ and $\rho_{\bb}:=\rho_{[\bb]}$ are defined as above.

Recall from Proposition \ref{prop:conjreps} that the set $\CR_{C}(\s)$ of ordered np sets of genus $\s$ in the fundamental region 
 has a $\Sym_n$ dot action. In general, this action may not be easy to describe explicitly.
However,  the following useful special result gives a
simple description of the effect of applying  diagram
automorphisms induced by longest elements, to 
simple subsystems  in the fundamental region for $W$.

  \begin{prop}\label{prop:diagaut} Let $\bb:=(\b_{1},\ldots, \b_{n})\in \Phi^{n}\cap \CC^{(n)}_{W}$ be an ordered simple system (for some root subsystem $\Psi$ of $\Phi$) which lies  in the fundamental region for $W$ on $V^{n}$. Then $W(\rho_{\bb}(\b_{1}),\ldots, \rho_{\bb}(\b_{n}))\cap \CC_{W}^{(n)}=\set{(\rho(\b_{1}),\ldots, \rho(\b_{n}))}$ i.e. 
  $(\rho(\b_{1}),\ldots, \rho(\b_{n}))$ is the representative, in the fundamental region for $W$ on $V^{n}$, of the ordered simple subsystem 
  $(\rho_{\bb}(\b_{1}),\ldots, \rho_{\bb}(\b_{n}))$ of $\Psi$.
    \end{prop}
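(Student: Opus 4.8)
The plan is to reduce everything to the uniqueness statement Theorem~\ref{thm:fundreg}(b): it suffices to exhibit some $w\in W$ with
\[
w\bigl(\rho_{\bb}(\b_{1}),\ldots,\rho_{\bb}(\b_{n})\bigr)=\bigl(\rho(\b_{1}),\ldots,\rho(\b_{n})\bigr),
\]
and to check separately that $(\rho(\b_{1}),\ldots,\rho(\b_{n}))$ lies in $\CC^{(n)}_{W}$. Granting both, $(\rho(\b_{1}),\ldots,\rho(\b_{n}))$ is forced to be the unique point of the $W$-orbit $W(\rho_{\bb}(\b_{1}),\ldots,\rho_{\bb}(\b_{n}))$ lying in $\CC^{(n)}_{W}$, which is exactly the asserted equality.

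To produce $w$, I would first record the linear-algebra identities underlying the definitions of $\rho=\rho_{\Pi}$ and $\rho_{\bb}=\rho_{[\bb]}$. Since $\Pi$ is a linearly independent set spanning $\real\Pi=\linspan(\Phi)$ and $\o(\a)=-\rho(\a)$ for $\a\in\Pi$, the linear endomorphisms $\o$ and $-\rho$ of $\real\Pi$ agree on the basis $\Pi$, hence coincide on $\real\Pi$; in particular $\o(\gamma)=-\rho(\gamma)$ for every $\gamma\in\Phi$. The same argument applied to the linearly independent set $[\bb]$ and to $\o_{\bb}$, the longest element of $W_{[\bb]}$, gives $\o_{\bb}=-\rho_{\bb}$ on $\real[\bb]$, hence $\o_{\bb}(\gamma)=-\rho_{\bb}(\gamma)$ for all $\gamma\in\Psi=\Phi_{[\bb]}$. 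As $\o_{\bb}$ is an involution in $W$ and $-\Id$ is central, $\rho_{\bb}^{2}=\o_{\bb}^{2}=\Id$ on $\real[\bb]$, so $\rho_{\bb}$ is an involution of the set $[\bb]$. Now set $w:=\o\,\o_{\bb}\in W$ (legitimate since $\o_{\bb}\in W_{[\bb]}\seq W$); then for each $i$, using $\rho_{\bb}(\b_{i})\in[\bb]\seq\Psi$ and then $\b_{i}\in\Phi$,
\[
w\bigl(\rho_{\bb}(\b_{i})\bigr)=\o\bigl(\o_{\bb}(\rho_{\bb}(\b_{i}))\bigr)=\o\bigl(-\rho_{\bb}(\rho_{\bb}(\b_{i}))\bigr)=\o(-\b_{i})=-\o(\b_{i})=\rho(\b_{i}),
\]
which is what is required.

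For membership in $\CC^{(n)}_{W}$ I would invoke \S\ref{ss:diagaut}. The permutation $\rho=\rho_{\Pi}$ of $\Pi$ is a diagram automorphism of the Coxeter system $(W,S)$: conjugation by the longest element $\o$ stabilises $S$, and $\o s_{\a}\o^{-1}=s_{\o(\a)}=s_{-\rho(\a)}=s_{\rho(\a)}$ for $\a\in\Pi$, so $\rho$ induces precisely this automorphism; thus $\rho$ lies in the group $D'$ of diagram automorphisms of $\Pi$. By the last sentence of \S\ref{ss:diagaut}, $\CC^{(n)}_{W}\cap\Phi^{n}$ is invariant under the diagonal action of $D'$ on $\Phi^{n}$, and $\bb=(\b_{1},\ldots,\b_{n})\in\CC^{(n)}_{W}\cap\Phi^{n}$ by hypothesis, so $(\rho(\b_{1}),\ldots,\rho(\b_{n}))\in\CC^{(n)}_{W}$. (Alternatively one can verify this directly from \eqref{eq:deffun}: realise $\rho$ by the isometry of $V$ equal to $-\o$ on $\real\Pi$ and to the identity on $(\real\Pi)^{\perp}$; it normalises $W$, carries each standard parabolic $W_{\bb,m}=W_{I_{m}}$ to the standard parabolic $W_{\rho(I_{m})}=W_{(\rho\bb),m}$ and $\CC_{W_{\bb,m-1}}$ to $\CC_{W_{(\rho\bb),m-1}}$, so the recursion defining $\CC^{(n)}_{W}$ is preserved.) Together with the previous paragraph and Theorem~\ref{thm:fundreg}(b), this finishes the argument.

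I anticipate no essential difficulty here: the whole content is the short identity $w=\o\,\o_{\bb}$. The only care needed is bookkeeping---making sure the relations $\o|_{\Pi}=-\rho|_{\Pi}$ and $\o_{\bb}|_{[\bb]}=-\rho_{\bb}|_{[\bb]}$ spread to all of $\Phi$, respectively $\Psi$, by linearity (for which one only needs that simple systems are linearly independent and span the relevant subspace), and confirming that $\rho_{\Pi}$ genuinely is a diagram automorphism so that the $D'$-invariance of $\CC^{(n)}_{W}\cap\Phi^{n}$ from \S\ref{ss:diagaut} is applicable.
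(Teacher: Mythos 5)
Your proposal is correct and follows essentially the same route as the paper: both exhibit $w=\o\,\o_{\bb}\in W$ carrying $(\rho_{\bb}(\b_{1}),\ldots,\rho_{\bb}(\b_{n}))$ to $(\rho(\b_{1}),\ldots,\rho(\b_{n}))=-\o(\bb)$, use the invariance of $\CC_{W}^{(n)}\cap\Phi^{n}$ under $-\o$ from \S\ref{ss:diagaut} to place the latter in the fundamental region, and conclude by the uniqueness in Theorem~\ref{thm:fundreg}. Your write-up merely makes explicit a few points the paper leaves implicit (the linear extension of $\o|_{\Pi}=-\rho|_{\Pi}$ to all of $\Phi$, and the verification that $\rho$ is a diagram automorphism).
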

    \begin{proof}
    One has $\o_{\bb}(\b_{1},\ldots, \b_{n})=-(\rho_{\bb}(\b_{1}),\ldots, \rho_{\bb}(\b_{n}))$ and therefore  
    $(\b_{1},\ldots, \b_{n})=-\o_{\bb}(\rho_{\bb}(\b_{1}),\ldots, \rho_{\bb}(\b_{n}))\in \CC_{W}^{(n)}$ since $\o_{\bb}$ is either an involution or the identity. 
    By \ref{ss:diagaut},  $\CC_{W}^{(n)}\cap \Phi^{n}$ is $-\o$-invariant. Hence  \begin{equation*}
    -\o(\b_{1},\ldots, \b_{n})=
    (\o\o_{\bb})(\rho_{\bb}(\b_{1}),\ldots, \rho_{\bb}(\b_{n}))\in \CC_{W}^{(n)}\cap \Phi^{n}.
    \end{equation*} Since $\o\o_{\bb}\in W$ and $-\o(\b_i)=\rho(\b_i)$, the proposition follows from Theorem \ref{thm:fundreg}.
        \end{proof}
  \section{Simple subsystems of type $A$}
  \label{sec:typeA}
  In this section, $\Phi$ denotes a crystallographic root system. We shall define a standard genus $\s_{n}$ of type $A_{n}$ and determine all ordered simple subsystems of $\Phi$ of Cartan genus $\s_{n}$ in the fundamental region for $W$. 
 The following result of Oshima plays an important role in the proof of  this  and related results in arbitrary type.

\begin{lemma} \label{lem:oshorig} \cite{Osh} Suppose that  $W$ is an irreducible finite Weyl group with 
crystallographic root system $\Phi$.  Fix $\Delta\seq \Pi$, and scalars $c_{\b}\in \real $ 
for $\b\in \Delta$, not all zero, and $l\in \real$.
 Let $\g[\a]$ denote the coefficient of $\a\in\Pi$ in $\g$, expressed as a linear combination of $\Pi$,
and write
\begin{equation*}
X:=\mset{\g\in \Phi\mid \mpair{\g,\g}^{1/2}=l \text{ \rm and } \g[\b]=c_{\b} \text{ \rm for all $\b\in \D$}}.
\end{equation*}  
If $X$ is non-empty, then it is a single   $W_{\Pi\sm \D}$-orbit of roots.  
Equivalently, $\vert X\cap \CC_{\Pi\sm \D}\vert \leq 1$.
\end{lemma}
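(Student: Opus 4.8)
The plan is to prove the equivalent statement $\vert X\cap\CC_{\Pi\sm\D}\vert\le 1$. Put $P:=\Pi\sm\D$; since the $c_\b$ are not all zero we have $\D\neq\eset$, hence $P\sneq\Pi$. For $\a\in P$ the reflection $s_\a$ alters only the $\Pi$-coefficient of $\a$ and preserves $\mpair{-,-}$, so the parabolic subgroup $W_{\Pi\sm\D}$ stabilises $X$; moreover $\CC_{\Pi\sm\D}$ is precisely the fundamental chamber of $W_{\Pi\sm\D}$ on $V$. By Lemma \ref{lem:fundcham}(a) applied to $W_{\Pi\sm\D}$, each of its orbits in $X$ meets $\CC_{\Pi\sm\D}$ in exactly one point, so $\vert X\cap\CC_{\Pi\sm\D}\vert$ equals the number of $W_{\Pi\sm\D}$-orbits in $X$. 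Thus it suffices to show that any two $P$-dominant roots $\g,\g'\in X$ coincide.

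So let $\g,\g'\in X$ be $P$-dominant and set $\d:=\g-\g'$; since the $\D$-coefficients of $\g$ and $\g'$ agree, $\d\in\bbZ P$. From $\mpair{\g,\g}=\mpair{\g',\g'}=l^2$ one gets
\begin{equation*}
\mpair{\g,\d}=-\mpair{\g',\d}=\half\mpair{\d,\d}.
\end{equation*}
If all $P$-coefficients of $\d$ were $\ge 0$, then $P$-dominance of $\g'$ would give $\mpair{\g',\d}\ge 0$, forcing $\mpair{\d,\d}\le 0$, i.e.\ $\d=0$; symmetrically if all are $\le 0$. Hence, assuming $\d\neq 0$, the $P$-coefficients of $\d$ have both signs. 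Next, $\mpair{\g,\ck{\g'}}=2\mpair{\g,\g'}/l^2=2-\mpair{\d,\d}/l^2$ is an integer; the value $2$ gives $\g=\g'$ and so $\d=0$, while the value $-2$ gives $\g'=-\g$, which on comparing $\D$-coefficients forces every $c_\b=0$, against our hypothesis. Since $\g,\g'$ have equal length, $\mpair{\g,\ck{\g'}}=\mpair{\g',\ck\g}$ and the product of these two Cartan integers is at most $4$, so $\mpair{\g,\ck{\g'}}\in\set{-1,0,1}$ and $\mpair{\d,\d}/l^2\in\set{1,2,3}$. Finally, whenever $\d\in\Phi$ we reach a contradiction: $\d\in\bbZ P$ forces $s_\d\in W_{\Pi\sm\D}$, and $\mpair{\g,\ck\d}=2\mpair{\g,\d}/\mpair{\d,\d}=1$ gives $s_\d(\g)=\g-\d=\g'$, so $\g$ and $\g'$ lie in a common $W_{\Pi\sm\D}$-orbit and hence are equal by Lemma \ref{lem:fundcham}(a). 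In particular $\mpair{\d,\d}=l^2$ is impossible (there $\mpair{\g,\ck{\g'}}=1$, so $s_{\g'}(\g)=\g-\g'=\d$ is a root). This reduces the lemma to excluding, for $\d\neq 0$, the remaining configurations: $\d\notin\Phi$, the $P$-coefficients of $\d$ of mixed sign, and $\mpair{\d,\d}\in\set{2l^2,3l^2}$ — where $\mpair{\g,\g'}=0$ in the first case and $\g+\g'$ is again a $P$-dominant root of length $l$ in the second.

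This last step is the main obstacle, and it is exactly where irreducibility of $\Phi$ enters: the statement fails for reducible $\Phi$, e.g.\ of type $A_2\times A_1$, where one readily exhibits two distinct $W_{\Pi\sm\D}$-orbits of roots of a common length with common $\D$-coefficients. To attack the remaining cases I would invoke irreducibility through the fact that the roots of length $l$ form a single $W$-orbit, so $\g'=w\g$ for some $w\in W$; the task is then to show that, given the prescribed values $c_\b$, such a $w$ may be chosen in $W_{\Pi\sm\D}$ up to $\stab_W(\g)$. I would look for a rank reduction — for instance by localising at the simple roots in $\supp(\d)\seq P$ together with a minimal additional set needed to carry $\g$ and $\g'$, and inducting on $\vert\Pi\vert$ — and, failing a uniform inductive argument, complete the proof by a direct check over the finitely many irreducible types. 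This last, essentially computational, analysis is the substance of \cite{Osh}, and I expect it to be both the longest and the most delicate part of the argument.
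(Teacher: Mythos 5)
The paper does not actually prove this lemma: it is quoted from Oshima, where it is established using the representation theory of semisimple complex Lie algebras, and the authors explicitly defer a direct proof to future work. So your attempt at an elementary argument is necessarily a different route from the paper's, and the only question is whether it is complete. It is not. The reduction you carry out is correct as far as it goes: $W_{\Pi\sm\D}$ stabilises $X$, so the claim is equivalent to any two $(\Pi\sm\D)$-dominant elements $\g,\g'$ of $X$ coinciding; the identity $\mpair{\g,\d}=\half\mpair{\d,\d}$ for $\d:=\g-\g'$ correctly rules out $\d\neq 0$ having coefficients of constant sign; and the case $\mpair{\g,\ck{\g'}}=1$ is correctly disposed of, since there $\d=s_{\g'}(\g)$ is a root lying in $\bbZ(\Pi\sm\D)$, so $s_{\d}\in W_{\Pi\sm\D}$ and $s_{\d}(\g)=\g'$.

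However, the cases $\mpair{\g,\ck{\g'}}\in\set{0,-1}$ with $\d\notin\Phi$ --- which you correctly isolate as what remains --- are the entire content of the lemma, and you do not prove them; ``localise, induct on rank, and failing that check the finitely many irreducible types'' is a plan, not an argument, and the type-by-type verification is never carried out. Until those cases are closed the lemma is not established. A secondary point: your assertion that the statement fails for reducible $\Phi$ (e.g.\ type $A_2\times A_1$) is incorrect. The hypothesis that the $c_{\b}$ are not all zero forces every $\g\in X$ to have nonzero coefficient on some fixed $\b_0\in\D$ with $c_{\b_0}\neq 0$, hence to lie in the single irreducible component of $\Phi$ containing $\b_0$; the reducible case therefore reduces at once to the irreducible one. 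Irreducibility must be doing its work inside the hard cases you have left open (in Oshima's proof it enters through highest-weight theory), not at the point in the argument where you locate it.
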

This is proved in \cite[Lemma 4.3]{Osh} using facts in \cite{OdaOsh} from  the 
representation theory of  semisimple complex Lie algebras. 
We shall give a direct proof of a more general version of  Lemma \ref{lem:oshorig} in a future work \cite{DyLOsh}, but in this paper 
make use of it only as stated.

 \begin{definition}\label{def:genusa}
 The  standard genus  
 $\s_{n}$ of Cartan  type $A_{n}$ is the Cartan matrix of type $A_{n}$ in its standard ordering as in \cite{Bour} i.e.
 $\s_{n}=(a_{i,j})_{i,j=1}^{n}$ where for $1\leq i\leq j\leq n$, one has  $a_{ij}=2$ if $i=j$, $a_{i,j}=a_{j,i}=-1$ if $j-i=1$ and $a_{i,j}=a_{ji}=0$ if $j-i>1$.  
\end{definition}
 
 Thus, for $\bb:=(\b_{1},\ldots, \b_{n})\in \Phi^{n}$, 
 one has $\bb\in \CS_{C''}(\s_{n})$ if and only if  one has $(\mpair{\ck\b_{i},\b_{j}})_{i,j=1,\ldots,n}=\s_{n}$.  
 Clearly, the automorphism group $G_{\s_{n}}$ of $\s_{n}$ is trivial if $n=1$ and is cyclic of 
 order $2$ if $n>1$, with non-trivial element the longest 
 element of $\Sym_{n}$ in the latter case.
 Recall that $\CR_{C''}(\s_{n}):=\CS_{C''}(\s_{n})\cap\CC_{W}^{(n)}$. \begin{prop}\label{prop:Agenus} 
Let $\bb=(\b_{1},\ldots, \b_{n})\in \CR_{C''}(\s_{n})$ where $n\geq 1$. 
Then 
  $\b_{1}$ is  a dominant root of $\Phi$ and $\b_{2},\ldots, \b_{n}\in -\Pi$.  \end{prop}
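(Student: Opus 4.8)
The plan is to argue by induction on $n$. For $n=1$ the claim is immediate: $\bb=(\b_1)$ lies in $\CC^{(1)}_W\cap\Phi=\CC_W\cap\Phi$, so $\b_1$ is a dominant root, and there is nothing further to prove. Let $n\geq 2$. The key reduction is that the truncation $\t_{n-1}(\bb)=(\b_1,\ldots,\b_{n-1})$ again belongs to a set of the same type: by the definition of $\CC^{(n)}_W$ it lies in $\CC^{(n-1)}_W$, it consists of roots, and its Cartan genus is the top-left $(n-1)\times(n-1)$ submatrix of $\s_n$, which by Definition~\ref{def:genusa} equals $\s_{n-1}$; hence $\t_{n-1}(\bb)\in\CR_{C''}(\s_{n-1})$. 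By the inductive hypothesis $\b_1$ is then a dominant root and $\b_2,\ldots,\b_{n-1}\in-\Pi$, so it only remains to show $\b_n\in-\Pi$. Before doing so I would reduce to the case that $\Phi$ is irreducible: the support of the dominant root $\b_1$ lies in a single irreducible component $\Phi_0$ of $\Phi$, and since the $A_n$-diagram of $[\bb]$ is connected, every $\b_j$ not orthogonal to a root of $\Phi_0$ lies in $\Phi_0$; inducting on $j$ shows that all of $\b_1,\ldots,\b_n$ lie in $\Phi_0$, so one may replace $(\Phi,\Pi)$ by $(\Phi_0,\Pi\cap\Phi_0)$.

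The next step is to prove that $\b_n\in\Phi_-$. By Corollary~\ref{cor:fund}, $\bb$ is the unique $\preceq$-maximal element of the orbit $W\bb$. Applying the reflection $s_{\b_n}\in W$ and using that $C''(\bb)=\s_n$ forces $\mpair{\ck\b_n,\b_i}=0$ for $i\leq n-2$ and $\mpair{\ck\b_n,\b_{n-1}}=-1$, one computes $s_{\b_n}(\bb)=(\b_1,\ldots,\b_{n-2},\,\b_{n-1}+\b_n,\,-\b_n)$; its first $n-2$ entries coincide with those of $\bb$, while $s_{\b_n}\bb\neq\bb$ (as $s_{\b_n}(\b_{n-1})=\b_{n-1}+\b_n\neq\b_{n-1}$), so the $\preceq$-maximality of $\bb$ forces $\b_{n-1}+\b_n<\b_{n-1}$ in the fixed vector-space total order on $V$ (equality would give $\b_n=0$), that is, $\b_n<0$. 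Since $\Phi_+\seq\mset{v\in V\mid v>0}$ this gives $\b_n\in\Phi_-$; put $\mu:=-\b_n\in\Phi_+$.

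It then remains to show $\mu\in\Pi$, and this is the step I expect to be the main obstacle. By Theorem~\ref{thm:fundreg}(a), $W_{\bb,n-1}$ is the standard parabolic $W_I$ with $\Pi_I=\mset{\a\in\Pi\mid\mpair{\a,\b_i}=0,\ i=1,\ldots,n-1}$, and $\bb\in\CC^{(n)}_W$ says exactly that $\b_n\in\CC_{W_I}$, i.e.\ $\mpair{\mu,\a}\leq 0$ for all $\a\in\Pi_I$; moreover $C''(\bb)=\s_n$ prescribes the norm $\|\mu\|=\|\b_{n-1}\|$ and the inner products $\mpair{\mu,\b_i}$ ($i<n$), so for $n\geq 3$ one has $\mpair{\mu,\b_1}=0$, which --- $\b_1$ being dominant --- confines the support of $\mu$ to the simple roots of the parabolic $\Stab_W(\b_1)$. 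The plan is to recognise $\mu$ as the simple root ``continuing'' the chain $-\b_1,-\b_2,\ldots,-\b_{n-1}$ in the (affine, when $\b_1$ is the highest root) Dynkin diagram produced by the deletion procedure in the proof of Proposition~\ref{prop:simply}, and to show it is the \emph{only} root with the prescribed norm and inner products lying in $\CC_{W_I}$. That uniqueness is provided by Oshima's Lemma~\ref{lem:oshorig}, applied with $\Delta:=\Pi\sm\Pi_I$ (so that $W_{\Pi\sm\Delta}=W_I$ and $\CC_{\Pi\sm\Delta}=\CC_{W_I}$), once the constraints on $\mu$ --- its norm and its inner products with $\b_1,\ldots,\b_{n-1}$ --- have been turned, by iterating reductions of the above kind and using the explicit forms $\b_i=-\a_i$ $(i\geq 2)$, into prescriptions of the simple-root coordinates $\mu[\a]$ for $\a\in\Delta$, and once one checks that a simple root carrying those invariants does lie in $\CC_{W_I}$. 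The hard part is precisely this conversion and verification; it is complicated by the possibility of two dominant roots of $\Phi$ (the long and the short highest root) in the non-simply-laced case and by the branching occurring when $\Phi$ has type $A$ or $D$. An alternative, avoiding Oshima's lemma, is a direct computation: writing $\mu=\sum_{\a\in\Pi}e_\a\a$ with $e_\a\in\Nat$, one uses the prescribed values of $\mpair{\mu,\b_i}$ together with $\mpair{\mu,\a}\leq 0$ for $\a\in\Pi_I$ and $\|\mu\|^2=\|\b_{n-1}\|^2$ to pin down $\mu\in\Pi$.
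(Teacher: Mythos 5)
Your induction set-up, the reduction to irreducible $\Phi$, and the truncation step $\t_{n-1}(\bb)\in\CR_{C''}(\s_{n-1})$ all match the paper's proof, and your lexicographic argument that $\b_n\in\Phi_-$ (applying $s_{\b_n}$ and invoking Proposition \ref{prop:lexorder}) is correct, though the paper does not need it. But the entire content of the proposition is the step you defer: showing that $-\b_n$ is \emph{simple}, and there you offer only a plan that you yourself flag as the main obstacle. As written, the proposal has a genuine gap: you propose to apply Oshima's Lemma \ref{lem:oshorig} with $\Delta=\Pi\sm\Pi_I$ after ``converting'' the inner products $\mpair{\mu,\b_i}$ into prescriptions of the coordinates $\mu[\a]$ for $\a\in\Delta$, but no such conversion is carried out, and it is not clear it can be done in that generality --- Oshima's lemma fixes simple-root \emph{coefficients}, not inner products, and the two are not interchangeable data.

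The paper's route supplies exactly the missing ingredients, and they are worth comparing with your sketch. First, since $\mpair{\b_n,\b_i}=0$ for $i\le n-2$, one has $s_{\b_n}\in W_{\bb,n-2}=W_{\D}$ with $\D=\Pi\cap\set{\b_1,\ldots,\b_{n-2}}^{\perp}$, so $\Sigma:=\supp(\b_n)\seq\D$. Second --- and this is the key combinatorial point absent from your proposal --- the set $\L$ of $\g\in\Sigma$ with $\mpair{\g,\b_{n-1}}\neq 0$ is a \emph{singleton} $\set{\a}$; this is proved by a cycle argument in the (affine) Dynkin diagram, using that $\b_2,\ldots,\b_{n-1}$ are already known to be negatives of simple roots. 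Third, integrality of the Cartan numbers then forces $\b_n[\a]=-1$ and $\mpair{\b_n,\b_n}=\mpair{\a,\a}$. Only now is Oshima's lemma applied, and it is applied \emph{inside} the subsystem $\Phi_{\Sigma}$ with the single coefficient at $\a$ prescribed (i.e. $\D=\set{\a}$ in the lemma's notation), yielding that $\b_n$ and $-\a$ lie in the same $W_{\Sigma\sm\set{\a}}$-orbit; since both lie in $\CC_{W_{\Sigma\sm\set{\a}}}$, they coincide by Lemma \ref{lem:fundcham}. Without the singleton claim for $\L$ and the resulting coefficient computation, your appeal to Oshima's lemma does not have the hypotheses it needs, so the proof is not complete.
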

\begin{proof}
The proof is by induction on $n$. Since $\mpair{\ck\b_{i},\b_{i+1}}=-1$ for $i=1,\ldots, n-1$, $\b_{2},\ldots, \b_{n}$ lie in the same component of $\Phi$ as $\b_{1}$. 
Without loss of generality, we may replace $\Phi$ by the component containing $\b_{1}$ and assume that $\Phi$ is irreducible. For $n=1$, one has 
$\b_{1}\in \Phi\cap\CC_{W}$ and the result is clear. Now suppose that 
$n>1$. Since 
$\bb':=\tau_{n-1}(\bb)=(\b_{1},\ldots, \b_{n-1})\in \CR_{C''}(\s_{n-1})$, induction gives $\b_{2},\ldots, \b_{n-1}\in -\Pi$.
Let  $\G:=\Pi\cap \set{\b_{1},\ldots, \b_{n-1}}^{\perp}$ and 
$\D:=\Pi\cap \set{\b_{1},\ldots, \b_{n-2}}^{\perp}\sreq \G$. Then
$W_{\G}=W_{\bb,n-1}$ and $W_{\D}=W_{\bb,n-2}$ by Theorem \ref{thm:fundreg}.
One has $s_{\b_{n}}\in W_{\bb,n-2}=W_{\D}$ since $\mpair{\b_{n},\b_{i}}=0$ for $i<n-1$, so  $\Sigma:=\supp(\b_{n})\seq \D$.
Define \begin{equation*}
\L:=\mset{\g\in \Sigma\mid \mpair{\g,\b_{n-1}}\neq 0}.
\end{equation*}
Since $0\neq \mpair{\b_{n},\b_{n-1}}=\sum_{\g\in \Sigma}\b_{n}[\g]\mpair{\g,\b_{n-1}}$, it is clear that $\vert \L\vert\geq 1$. 

We claim that $\vert \L\vert=1$.  Suppose to the contrary that 
$\vert \L\vert\geq 2$.   We have $\b_{n-1}\not \in \D$. 
If $n\geq 3$, then the full subgraph of the Dynkin diagram on vertex set
 $ \Sigma  \cup\set{-\b_{n-1}}\seq \Pi$ contains a cycle, contrary to 
 finiteness of $W$. Hence $n=2$, $\b_{n-1}\in \Phi\cap \CC_{W}$, 
  $\D=\Pi$ and 
$\vert \set{\g\in \Pi\mid \mpair{\b_{n-1},\g}>0}\vert \geq \vert \L\vert \geq 
2$. The classification implies that $\Phi$ is of type $A_{m}$ with $m>1$. Regard
$\Pi\cup\set{-\b_{1}}$ as the vertex set of the affine Dynkin diagram 
corresponding to $\Phi$.
 Then the full subgraph on $\set{-\b_{1}}\cup  \Sigma  $ contains a cycle,
so its vertex set  is equal to  $\Pi\cup\set{-\b_{1}}$. This implies that 
$ \Sigma  =\Pi$ and  $\b_{2}=\b_{n}=\pm \b_{1}$ contrary to 
$\mpair{\b_{n-1},\ck\b_{n}}=-1$. This proves the claim.

Write $\L=\set{\a}$ where $\a\in \Pi$ and abbreviate $c:=\b_{n}[\a]\neq 0$. We have
\begin{equation*}
\mpair{\b_{n},\b_{n-1}}=\sum_{\g\in \Sigma}\b_{n}[\g]\mpair{\g,\b_{n-1}}=
\b_{n}[\a]\mpair{\a,\b_{n-1}}=c\mpair{\a,\b_{n-1}}. 
\end{equation*} 
Writing $\mpair{\b_{n},\b_{n}}=\mpair{\b_{n-1},\b_{n-1}}=d\mpair{\a,\a}$,
it follows that
$-1=\mpair{\b_{n},\ck\b_{n-1}}=c\mpair{\a,\ck\b_{n-1}}$. Since $c,\mpair{\a,\ck\b_{n-1}}\in \Int$, this implies $c=-\mpair{\a,\ck\b_{n-1}}\in\set{\pm 1}$.
We have $ \mpair{\a,\ck\b_{n-1}}\geq 0$ since $\a\in \Pi$ and  $\b_{n-1}$ is a dominant root if $n=2$, and $-\b_{n-1}\neq \a$  are both  in $\Pi$ if $n>2$.  
Hence  $c=\mpair{\a,-\ck \b_{n-1}}=-1 $. Also,  
 $-1=\mpair{\b_{n-1},\ck \b_{n}}=\frac{c}{d}\mpair{\b_{n-1},\ck\a}$
 so $\mpair{\b_{n-1},\ck\a}=-dc^{-1}=d\in \Int_{\geq 1}$.
 From 
 \begin{equation*}
 \ck\b_{n}=\frac{2\b_{n}}{\mpair{\b_{n},\b_{n}}}=\frac{2}{\mpair{\b_{n},\b_{n}}}\sum_{\g\in \Pi}\b_{n}[\g]\g=\sum_{\g\in \Pi}\frac{\mpair{\g,\g}}{\mpair{\b_{n},\b_{n}}}\b_{n}[\g]\ck \g
 \end{equation*}
 one sees that $\Int\ni\ck\b_{n}[\ck \a]=\frac{\mpair{\a,\a}}{\mpair{\b_{n},\b_{n}}}\b_{n}[\a]=\frac{c}{d}=-d^{-1}$ so $d=1$.
 
 We have now established that $\mpair{\a,\a}=\mpair{\b_{n-1},\b_{n-1}}=\mpair{\b_{n},\b_{n}}$. This implies $\mpair{\b_{n-1},\ck \a}=\mpair{\a,\ck
  \b_{n-1}}=1$. 
  To complete the proof, it will suffice to  show that
   $\b_{n}=-\a$.  Taking $\Sigma=\supp(\b_{n})$ as above, let $\Psi:=W_{\Sigma}\Sigma$; this is an irreducible crystallographic root system.  
From above, one has $\b_{n},-\a\in \Psi$, $\b_{n}[\a]=-1=(-\a)[\a]$ and
   $\mpair{\b_{n},\b_{n}}=\mpair{\a,\a}$.  By Lemma \ref{lem:oshorig}, it follows that
   $\b_{n}$ and $-\a$ are in the same $W_{\Sigma'}$-orbit on $\Phi_\Sigma$ where 
   $\Sigma':=\Sigma\sm\set{\a}$. 
    But  $\Sigma'=\Sigma\sm \L=
   \Sigma\cap \b_{n-1}^{\perp}\seq   \D\cap \b_{n-1}^{\perp}=\G$. 
   Hence $\b_{n}\in \CC_{W_{\G}}\seq \CC_{W_{\Sigma'}}$.
   We also have $-\a\in \CC_{W_{\Sigma'}}$. From Lemma 
   \ref{lem:fundcham},    it follows that $\b_{n}=-\a$ as required.
\end{proof}

The following Corollary \ref{prop:ABij} and Theorem \ref{thm:Aconj} are closely related to the type $A_{n}$ case of the result \cite[Theorem 3.5(i)]{Osh} of  Oshima, while  Remark \ref{rem:graphical} is related to that result in some other classical types.

\begin{corollary}\label{prop:ABij} Let $P_{n}$ denote the set of all pairs $(\beta, \Gamma)$ such that $\beta$ is a dominant root of $\Phi$, $\Gamma$ is the vertex set of a  type $A_{n}$ subdiagram  of the Dynkin diagram of $\set{\beta}\cup-\Pi$ and $\beta$ is a terminal vertex of $\G$. Then there is  a bijection $\CR_{C''}(\s_{n})\to P_{n}$ defined by  $\bb=(\b_{1},\ldots, \b_{n})\mapsto (\b_{1}, [\bb])$ for all $\bb\in
\CR_{C''}(\s_{n})$.\end{corollary}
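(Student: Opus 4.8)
The plan is to show the map $\Theta\colon\bb=(\b_1,\dots,\b_n)\mapsto(\b_1,[\bb])$ is a well-defined bijection $\CR_{C''}(\s_n)\to P_n$, with Proposition~\ref{prop:Agenus} as the essential input. For well-definedness, take $\bb\in\CR_{C''}(\s_n)$. By Proposition~\ref{prop:Agenus} the root $\b_1$ is dominant and $\b_2,\dots,\b_n\in-\Pi$, so $[\bb]\seq\set{\b_1}\cup(-\Pi)$. Dominance of $\b_1$ and the fact that distinct simple roots pair non-positively make $\set{\b_1}\cup(-\Pi)$ an np subset of $\Phi$ in the sense of \S\ref{diag}, so it has a Dynkin diagram; the full subdiagram on the vertex set $[\bb]$ has generalised Cartan matrix $(\mpair{\ck\b_i,\b_j})_{i,j}=C''(\bb)=\s_n$, i.e.\ it is the type $A_n$ path with $\b_1$ a terminal node. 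Since the columns of $\s_n$ are pairwise distinct one has $\bb\in\Phi^{(n)}$, so $|[\bb]|=n$ and $\Theta(\bb)\in P_n$.

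Injectivity is formal: if $\Theta(\bb)=\Theta(\bb')$ then $\b_1=\b_1'$ and $[\bb]=[\bb']$, and since $C''(\bb)=C''(\bb')=\s_n$ both tuples list the vertices of the $A_n$-path on $[\bb]$ starting from the common terminal node $\b_1$, which determines the remaining entries step by step; hence $\bb=\bb'$.

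The real work is surjectivity. Given $(\beta,\Gamma)\in P_n$, enumerate $\Gamma=\set{\gamma_1,\dots,\gamma_n}$ consecutively along its $A_n$-path with $\gamma_1=\beta$, and set $\bb:=(\gamma_1,\dots,\gamma_n)$. Since a type $A_n$ diagram has only single unoriented bonds, $C''(\bb)=\s_n$, so $\bb\in\CS_{C''}(\s_n)$; as $\gamma_1=\beta$ and $\Gamma$ has $n$ distinct elements in $\set{\beta}\cup(-\Pi)$, we have $\gamma_i=-\a_i$ with $\a_i\in\Pi$ for $i\ge2$. It remains to show $\bb\in\CC_W^{(n)}$. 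I would prove by induction on $m$ that $\tau_m(\bb)\in\CC_W^{(m)}$: the case $m=1$ is dominance of $\beta$, and assuming the claim for $m$, Theorem~\ref{thm:fundreg}(a) identifies $W_{\bb,m}=W_{I_m}$ with $\Pi_{I_m}=\set{\a\in\Pi\mid\mpair{\a,\gamma_i}=0\text{ for }i\le m}$; since $\gamma_{m+1}$ is adjacent to $\gamma_m$ in $\Gamma$ we get $\mpair{\a_{m+1},\gamma_m}\neq0$, so $\a_{m+1}\notin\Pi_{I_m}$, and then $\mpair{-\a_{m+1},\a}=-\mpair{\a_{m+1},\a}\ge0$ for every $\a\in\Pi_{I_m}$ (each a simple root distinct from $\a_{m+1}$), i.e.\ $\gamma_{m+1}\in\CC_{W_{\bb,m}}$, so $\tau_{m+1}(\bb)\in\CC_W^{(m+1)}$. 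With $\bb\in\CC_W^{(n)}$ established we have $\bb\in\CR_{C''}(\s_n)$ and $\Theta(\bb)=(\beta,\Gamma)$.

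The main obstacle is exactly this last verification that the tuple reconstructed from $(\beta,\Gamma)$ lands in the fundamental region $\CC_W^{(n)}$: it needs the successive parabolic stabilisers $W_{\bb,m}$ to be read off from Theorem~\ref{thm:fundreg}(a), combined with the combinatorial facts that consecutive nodes of an $A_n$-path are non-orthogonal while distinct simple roots pair non-positively. Everything else reduces to Proposition~\ref{prop:Agenus} and the bookkeeping of Dynkin diagrams in \S\ref{diag}.
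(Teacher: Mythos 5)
Your proposal is correct and follows essentially the same route as the paper: the forward direction is read off from Proposition \ref{prop:Agenus}, and the converse enumerates $\Gamma$ along its $A_n$-path from $\beta$ and verifies membership in $\CC_W^{(n)}$ inductively, using dominance of $\beta$, the fact that $\mpair{\gamma_{m+1},\gamma_m}\neq 0$ excludes $-\gamma_{m+1}$ from $\Pi_{I_m}$, and non-positivity of inner products of distinct simple roots. Your treatment of injectivity and of the identification $W_{\bb,m}=W_{I_m}$ via Theorem \ref{thm:fundreg}(a) just makes explicit what the paper leaves implicit.
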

\begin{proof} Let $\bb\in \CR_{C''}(\s_{n})$. Then Proposition \ref{prop:Agenus} implies that $[\bb]$ is the vertex set of a subdiagram of the Dynkin diagram
of $\set{\b_{1}}\cup -\Pi$.  By definition of $\s_{n}$, this diagram is of type $A_{n}$ with $\beta_{1}$ terminal and $\beta_{i}$ joined to $\b_{i+1}$ for $i=1,\ldots, n-1$. Conversely, suppose $(\beta,\Gamma)\in P_{n}$.
Since $\Gamma$ is of type $A_{n}$ with $\b$ terminal, we may uniquely write $\G=[\bb]$ for some $\bb=(\b_{1},\ldots, \b_{n})\in \CS(\s_{n})$ with  $\b_{1}=\b$. 
We have to show that $\bb\in \CC_{W}^{(n)}$
i.e. for $i=1,\ldots, n$, $\b_{i}\in\CC_{W_{\Pi_{i}}}$.
where  $\Pi_{i}:=\Pi\cap \set{\b_{1},\ldots, \b_{i-1}}^{\perp}$. One has $\b_{1}\in  \CC$ by the assumption $\b_{1}$ is dominant.  To show  
$\b_{i}\in W_{\Pi_{i}} $ for $2\leq i\leq n$, we have  to show that $\mpair{\b_{i},\g}\geq 0$ for $\g\in \Pi_{i}$. 
But $\g\neq \b_{i}$ since $\mpair{\b_{i},\b_{i-1}}< 0$. Hence  $\mpair{\g,\b_{i}}\geq 0$, since $\g\in \Pi$ and $\b_{i}\in -\Pi$. 
\end{proof}
\begin{rem}\label{rem:graphical} An argument similar to that in the last 
part of the proof shows the following:   Let $\b_{1}\in \Phi$ be a dominant 
root and $\b_{2},\ldots, \b_{n}\in -\Pi$ be pairwise 
distinct and   such that for each $i=2,\ldots, n$, 
there is some $j<i$ with $\mpair{\b_{i},\b_{j}}\neq 0$.
Then $(\b_{1},\ldots, \b_{n})$ is an ordered np subset of $\Phi$ which lies in $\CC_{W}^{(n)}$.\end{rem}

\subsection{Conjugacy of simple subsystems of type $A$}
Let $\rho$ denote  the automorphism   $\rho=\rho_{\Pi}=-\o_{\Pi}$ of 
$\Phi$ induced by the action of the negative of the longest 
element $\o_{\Pi}$ of $W$ (see \ref{aut}). One has 
$\rho^{2}=\Id_{\Phi}$ and $\rho$ fixes each dominant root. Hence $H:=\mpair{\rho}=\set{1,\rho}$ acts naturally on $P_{n}$,  where $P_n$ is  defined in Corollary \ref{prop:ABij}.  
\begin{theorem}\label{thm:Aconj}
 The set of $W$-conjugacy classes of simple systems of type $A_{n}$ in $\Phi$ is in bijection with $P_{n}/H$.
\end{theorem}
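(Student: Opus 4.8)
The plan is to reduce, via the parametrization results already proved, to a comparison between the dot action of $G_{\s_n}$ on $\CR_{C''}(\s_n)$ and the natural $H$-action on $P_n$, and then to carry out that comparison using Proposition \ref{prop:diagaut}.

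\smallskip
\emph{Reduction.} Since the Cartan matrix of type $A_n$ is unique up to simultaneous permutation of rows and columns, ``simple system of type $A_n$'' is the same as ``simple system of Cartan type $\tau:=I(\s_n)$''. Applying Proposition \ref{prop:genconj}(a) with $C=C''$ and $\s=\s_n$ therefore gives a natural bijection between the set of $W$-conjugacy classes of simple systems of type $A_n$ in $\Phi$ and the orbit set $\CR_{C''}(\s_n)/G_{\s_n}$ for the dot action. By Corollary \ref{prop:ABij} the assignment $\phi\colon\bb=(\b_1,\dots,\b_n)\mapsto(\b_1,[\bb])$ is a bijection $\CR_{C''}(\s_n)\to P_n$. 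Hence it is enough to show that $\phi$ carries the dot action of $G_{\s_n}$ to the natural $H$-action on $P_n$, so that it descends to a bijection $\CR_{C''}(\s_n)/G_{\s_n}\xrightarrow{\,\sim\,}P_n/H$.

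\smallskip
\emph{The case $n=1$, and the group $G_{\s_n}$ for $n>1$.} When $n=1$, $\s_1=(2)$ has trivial automorphism group, so $\CR_{C''}(\s_1)/G_{\s_1}=\CR_{C''}(\s_1)$, while $P_1$ is the set of pairs $(\b,\set{\b})$ with $\b$ a dominant root; since $\rho$ fixes every dominant root, $H$ acts trivially on $P_1$, so $\phi$ itself is the required bijection. Now assume $n>1$. Then $G_{\s_n}=\set{1,\rho_0}$, where $\rho_0\in\Sym_n$ is the longest element, namely the order-reversing permutation $i\mapsto n+1-i$; one checks $\rho_0\s_n=\s_n$ directly, since the entries of $\s_n$ depend only on $\vert i-j\vert$.

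\smallskip
\emph{Comparison of the actions.} Fix $\bb=(\b_1,\dots,\b_n)\in\CR_{C''}(\s_n)$ (with $n>1$). By Proposition \ref{prop:Agenus}, $\b_1$ is a dominant root, $\b_2,\dots,\b_n\in-\Pi$, and $[\bb]$ is a Dynkin diagram of type $A_n$ in which $\b_i$ is joined to $\b_{i+1}$; consequently the diagram automorphism $\rho_{\bb}=\rho_{[\bb]}$ attached to the longest element $\o_{\bb}$ of $W_{[\bb]}$ is the flip $\b_i\mapsto\b_{n+1-i}$, so the tuple $(\rho_{\bb}(\b_1),\dots,\rho_{\bb}(\b_n))$ equals $(\b_n,\dots,\b_1)=\rho_0\bb$ (place permutation by $\rho_0$). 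Since $\bb$ is an ordered simple system lying in the fundamental region, Proposition \ref{prop:diagaut} gives $W(\rho_0\bb)\cap\CC_W^{(n)}=\set{(\rho(\b_1),\dots,\rho(\b_n))}$, i.e. by definition of the dot action $\rho_0\cdot\bb=(\rho(\b_1),\dots,\rho(\b_n))$. As $\b_1$ is dominant, $\rho(\b_1)=\b_1$, hence the underlying set of $\rho_0\cdot\bb$ is $\set{\b_1}\cup\rho(\set{\b_2,\dots,\b_n})=\rho([\bb])$, and therefore
\[
\phi(\rho_0\cdot\bb)=(\b_1,\rho([\bb]))=\rho\cdot(\b_1,[\bb])=\rho\cdot\phi(\bb),
\]
the middle equality being the definition of the $H$-action on $P_n$: indeed $\rho$ restricts to an automorphism of the Dynkin diagram of $\set{\b_1}\cup-\Pi$ fixing $\b_1$, so it sends type $A_n$ subdiagrams with terminal vertex $\b_1$ to type $A_n$ subdiagrams with terminal vertex $\b_1$. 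Since the $G_{\s_n}$-orbit of $\bb$ is $\set{\bb,\rho_0\cdot\bb}$ and the $H$-orbit of $\phi(\bb)$ is $\set{\phi(\bb),\rho\cdot\phi(\bb)}$, the bijection $\phi$ restricts to a bijection from each $G_{\s_n}$-orbit onto an $H$-orbit, hence descends to a bijection $\CR_{C''}(\s_n)/G_{\s_n}\xrightarrow{\,\sim\,}P_n/H$. Combined with the Reduction step this proves the theorem.

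\smallskip
\emph{Main obstacle.} The substantive point is the identity $\rho_0\cdot\bb=(\rho(\b_1),\dots,\rho(\b_n))$, identifying the dot action of the nontrivial automorphism of $\s_n$ with the involution $\rho$; this is exactly what Proposition \ref{prop:diagaut} provides, once Proposition \ref{prop:Agenus} has been invoked to ensure that each member of $\CR_{C''}(\s_n)$ is genuinely an $A_n$-chain starting at a dominant root. Everything else is bookkeeping: the trivial separate case $n=1$, and the observation that $\rho$ acts on $P_n$ as the diagram automorphism fixing the dominant root.
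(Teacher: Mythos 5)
Your proposal is correct and follows essentially the same route as the paper: reduce via Proposition \ref{prop:genconj} to the dot action of $G_{\s_n}=\set{1,\th}$ on $\CR_{C''}(\s_n)$, use Proposition \ref{prop:Agenus} to identify the place permutation $\th\bb$ with $(\rho_{\bb}(\b_1),\ldots,\rho_{\bb}(\b_n))$, apply Proposition \ref{prop:diagaut} to get $\th\cdot\bb=\rho(\bb)$, and transport through the $H$-equivariant bijection of Corollary \ref{prop:ABij}. Your explicit verification that the bijection intertwines the two actions (and the separate treatment of $n=1$) only makes explicit what the paper leaves implicit.
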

\begin{proof} Let $\s=\s_{n}$ and $G=g_{\s}$. By Proposition \ref{prop:genconj}, 
conjugacy classes of simple systems of type $A_{n}$
 in $\Phi$ correspond bijectively to $G$-orbits in the dot
  action of $G$ on $\CR_{C''}(\s)$.  Now $G=\set{1,\th}\subseteq \Sym_{n}$ where 
$\th(i)=n-i+1$ for $i=1,\ldots, n$. For 
$\bb=(\b_{1},\ldots, \b_{n})\in \CR_{C''}(\s_{n})$, one 
has 
$\th(\b_{1},\ldots, \b_{n})= (\b_{n},\ldots, 
\b_{1})=(\rho_{\bb}(\b_{1}),\ldots, \rho_{\bb}(\b_{n}))$ 
for the diagram automorphism $\rho_{\bb}$ defined as 
in \ref{aut}.   For $\bb\in \CR(\s)$,  
  Proposition \ref{prop:diagaut}
implies  that $\th\cdot\bb=\rho(\bb)$ and  so $G\cdot \bb=H\bb$. 
This shows that   conjugacy classes of simple systems 
of type $A_{n}$ in $\Phi$ correspond bijectively to $H$-orbits  on $\CR_{C''}(\s)$.  Finally, the theorem follows on observing that the bijection $ \CR_{C''}(\s)\xrightarrow{\cong} P_{n}$ of Proposition \ref{prop:ABij} is  $H$-equivariant.
\end{proof}

\section{Simple subsystems of arbitrary  types}
\label{sec:arbtype}
In this section, we  indicate, without proofs, how Propositions \ref{prop:genconj} and \ref{prop:Agenus} provide an approach to studying conjugacy of simple subsystems of arbitrary types in finite crystallographic root systems. We confine the discussion to Cartan genera, though  similar results apply to Gram genera and may be more convenient in practice. Throughout this section, $\Phi$ is a (possibly reducible) crystallographic root system.

\subsection{Standard genera of irreducible type} Choose for each type $X_{n}$ of finite irreducible crystallographic root system a standard genus $\s_{X_{n}}=(a_{i,j})_{i,j=1^{n}}$ as follows.  Let $l$ be the maximal rank of a type $A_{l}$ standard 
 parabolic subsystem of a simple system of type $X_{l}
 $ (one has $l=n-1$ unless $X_{n}=A_{n}$, when
  $l=n$, or $X_{n}=F_{4}$, when $l=n-2=2$). We 
  require first that $\s_{X_{n}}$ be a Cartan matrix of 
  type $X_{n}$, that $(a_{i,j})_{i,j=1^{l}}=\s_{l}$ and  that 
  for each $i=l+1,\ldots, n$, there is some (necessarily unique) $p_{i}<i$ 
  such that $a_{p_{i},i}\neq 0$.  Second, we require that
 $\s_{X_{n}}$  be  chosen so $(p_{l+1},\ldots, p_{n})$ is  lexicographically greatest for all possible Cartan 
 matrices satisfying these requirements (i.e. the index 
 of any branch vertex is taken as large as possible 
 subject to the previous conditions). Finally, if 
 $\s_{X_{n}}$ is not uniquely determined by the 
 previous conditions (that is, if it is $B_{2}=C_{2}$, 
 $F_{4}$ or $G_{2}$), we require that $a_{l,l+1}=1$ (i.e. a root corresponding to $a_{l}$ is longer than that 
 corresponding to $a_{l+1}$). In types $A_{n}$ with 
 $n\geq 1$, $B_{n}$  with $n\geq 2$, $C_{n}$ with 
 $n\geq 3$,   $D_{n}$ with $n\geq 4$ and $F_{4}$, 
 $\s_{X_{n}}$ is the Cartan matrix of type $X_{n}$ 
 indexed exactly as in \cite{Bour}; in other types the 
 indexing differs.
 
 \subsection{} The above conditions are chosen on heuristic grounds 
 to make determination of $\CR_{C''}(\s_{X_{n}})$ as  
 computationally simple as possible.  They assure that
  if  $\bb=(\b_{1},\ldots, \b_{n})\in \CR_{C''}(\s_{X_{n}})$ 
  then at most one of $-\b_{2},\ldots, -\b_{n}$ is not  a simple root (except for $X_{n}=F_{4}$ 
   which requires a separate  check, this follows from  
   Proposition  \ref{prop:Agenus} since $\tau_{l}
   \bb=(\b_{1},\ldots, \b_{l})\in  \CR_{C''}(\s_{A_{l}})$ and
    $l\geq n-1$). 
    
If  such a non-simple root amongst $-\b_{2},\ldots, -\b_{n}$ does not exist, then
 $\bb$ must arise as in Remark \ref{rem:graphical} and the possible such $\bb$ may be determined by inspection of the Dynkin diagram.   It turns out that if there is a non-simple 
 root in $-\b_{2},\ldots, -\b_{n}$, then,  with two types of  exceptions,
it  is   a 
dominant root  for an  irreducible 
standard parabolic subsystem of the component of
 $\Phi$ containing $\b_{1}$ (and thus containing $[\bb]$).  (The exceptions occur when $X_{n}=G_{2}$ and that component is  of type $G_{2}$, or when $X_{n}=E_{6}$ and that component is of type $E_{8}$.)
  In fact,  general results (depending on Lemma 
  \ref{lem:oshorig}) can be given which reduce the 
   determination of $\CR_{C''}(X_{n})$ in all cases to 
     inspection of Dynkin diagrams (alternatively, the results could be listed 
     as tables).  Moreover, the action of $G_{\s}$, 
     where $\s=\s_{X_{n}}$ can be explicitly determined
  (in most cases it is either trivial or given by Proposition 
  \ref{prop:diagaut}).
  
\subsection{} \label{ss:classical}We give some examples. Suppose that $\Phi$ is of type $B_{n}$ with $n\geq 3$. Enumerate the simple roots $\a_{1},\ldots, \a_{n}$ exactly as in \cite{Bour}, so $\a_{1}$ and $\a_{n}$ are terminal in the Coxeter graph and $\a_{n}$ is short. Denote the highest long (resp., short) root of $\Phi$ as $\a_{0}^{l}$ (resp., $\a_{0}^{s}$). We write, for example,  $\a_{m,\ldots, n}^{s}$ (resp., $\a_{m,\ldots, n}^{l}$) for the highest short (resp., long) root of the subsystem with simple roots
$\set{\a_{m},\ldots, \a_{n}}$. 
Then for $2\leq m\leq n$ one has 
\begin{equation*}
\CR_{C''}(\s_{B_{m}})=\set{(\a_{0}^{l},-\a_{2},\ldots, -\a_{m-1}, -\a_{m,\ldots, n}^{s})}
\end{equation*} 
  Proposition \ref{prop:genconj} implies there is a single conjugacy class of subsystems of type $B_{m}$.

For $4< m\leq n$, one has 
\begin{equation*}
\CR_{C''}(\s_{D_{m}})=\set{\bb_{m}:=(\a_{0}^{l},-\a_{2},\ldots, -\a_{m-1}, -\a_{m-1,\ldots, n}^{l})}.
\end{equation*} For $m=4<n$, one has 
$\CR_{C''}(\s_{D_{4}})=\set{\bb_{4}}\cup\set{\bd_{i}:=(\a_{0}
^{l}, -\a_{2},-\a_{i}, -\a_{4-i})\mid i=1,3}$. Observe that $[\bb_{4}],
[\bd_{1}]=[\bd_{3}]$ are simple systems for the same subsystem of $\Phi$ (of type $D_{4})$. Hence they 
are conjugate. We conclude there is a unique conjugacy class of simple systems of type $D_{m}$ in 
$\Phi$ if $4\leq m\leq n$. 

These results  may also be seen by (or used in) calculating 
 the  dot actions. For example, consider the dot action  of $G:=G_{\s_{D_{4}}}=\mpair{(1,3),(3,4)}
 \cong \Sym_{3}$ on $\CR_{C''}(\s_{D_{4}})$ where $4<n$. One obviously 
 has $(3,4)\cdot\bd_{1}=\bd_{3}$ (noting $(3,4)\bd_{1}=
 \bd_{3}$ and using Proposition \ref{prop:dots}(a)) and hence $(3,4)\cdot\bb_{4}=\bb_{4}$.   A more 
 tedious  calculation shows $(1,3)\cdot \bd_{3}=\bb_{4}$
  and that $(1,3)\cdot  \bd_{1}=\bd_{1}$. This can also be 
  seen more indirectly  as follows. By Propositions \ref{prop:dots} and 
  \ref{prop:diagaut}, $(1,3)\cdot \tau_{3}(\bu)=\tau_{3}(\bu)$ for all
   $\bu\in \CR_{C''}(\s_{D_{4}})$. This implies 
   $\set{\bd_{3},\bb_{4}}$ must be stable under the dot action 
   of $(1,3)$. If the restricted dot action of $(1,3)$ on this set is trivial, then
    $(1,3)$ would fix 
$\CR_{C''}(\s_{D_{4}})$ pointwise. But  from above,  
$G$ acts transitively on $\CR_{C''}(\s_{D_{4}})$, so $(1,3)$ 
can't act trivially.

Note that the dot action provides finer information than just the 
the description of conjugacy classes. For example, one may determine from  above which diagram automorphism of, say,  
$[\bd_{1}]=[\bd_{3}]$ are induced by the action of an element of $W$.

Although the above results on conjugacy in classical type are well known, the same techniques can be applied in general to determine even by hand  the conjugacy classes of irreducible subsystems for $\Phi$ of exceptional type.  We conclude by indicating how the methods can be extended to the study of possibly reducible subsystems.

\subsection{Standard genera of reducible type}
The next Lemma permits some reductions  in the study of conjugacy of  simple subsystems $\G$ of $\Phi$ to the case when both    $\Phi$ and $\G$ are irreducible.
\begin{lemma}\label{lem:simpcon}
 Let $\ba:=(\a_{1},\ldots, \a_{n})\in \Phi^{n}$ where $n\geq 1$. \begin{num}\item Let $\Psi$ be a 
 union  of  components of $\Phi$ such that  $[\ba]\seq \Psi$ and let $W':=W_{\Psi}$.
Then $\ba\in \CC_{W}^{(n)}$ if and only if $\ba\in \CC_{W'}^{(n)}$. 
\item  Suppose that  
  $1\leq m< n$ and that  $\mpair{\a_{i},\a_{j}}=0$ 
  for  all $1\leq i\leq m$ and $m+1 \leq j\leq n$.
  Suppose also that for each $1\leq j\leq m $, there is some $i<j$ with $\mpair{\b_{i},\b_{j}}\neq 0$.
 Let $W':=W_{\a,m}$, $\ba':=(\a_{1},\ldots,\a_{m})$  
and $\ba'':=(\a_{m+1},\ldots,\a_{n})$. 
Let $\Psi$ be the component of $\Phi$ containing $\a_{1}$. 
Then $\ba\in \CC_{W}^{(n)}$ if and only if $\ba'\in \CC_{W_{\Psi}}^{(m)}$ and $\ba''\in \CC_{W'}^{(n-m)}$. 
\item In $\text{\rm (b)}$,  $[\ba]$ is a simple subsystem of $\Phi$  if and only if $[\ba']$ is a simple 
subsystem of $\Psi$  and $[\ba'']$ is a  simple subsystem of $\Phi_{W''}$.
\end{num}
\end{lemma}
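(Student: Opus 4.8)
The plan is to prove the three parts in order, with (a) and (b) essentially structural facts about the fundamental region that reduce to tracking stabilisers component by component, and (c) a routine consequence once the geometry is in place. For part (a), the key observation is that if $\Psi$ is a union of components of $\Phi$ and $W'=W_\Psi$, then $W = W' \times W_{\Psi^c}$, where $\Psi^c$ is the complementary union of components, and $W_{\Psi^c}$ acts trivially on $\linspan(\Psi)$ while $W'$ acts trivially on $\linspan(\Psi^c)$. Since each $\a_i$ lies in $\linspan(\Psi)$, we have $\stab_W(\a_i) = \stab_{W'}(\a_i) \times W_{\Psi^c}$, and more generally $W_{\ba,i} = W'_{\ba,i} \times W_{\Psi^c}$ for the analogously-defined $W'_{\ba,i}$. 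First I would record that the fundamental chamber decomposes accordingly: $\CC_{W_{\ba,i-1}} \cap \linspan(\Psi) = \CC_{W'_{\ba,i-1}} \cap \linspan(\Psi)$, since the simple roots of $W_{\ba,i-1}$ that meet $\linspan(\Psi)$ are exactly those of $W'_{\ba,i-1}$, and the inner product condition $\mpair{\a_i,\cdot}\ge 0$ against the remaining simple roots (which lie in $\linspan(\Psi^c)$) is automatic since $\a_i \perp \linspan(\Psi^c)$. Then the condition $\a_i \in \CC_{W_{\ba,i-1}}$ for all $i$ is equivalent to $\a_i \in \CC_{W'_{\ba,i-1}}$ for all $i$, which is exactly $\ba \in \CC^{(n)}_{W'}$; this gives (a).

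For part (b), I would use the recursive characterisation \eqref{eq:fundchrec}: with $W' = W_{\ba,m}$, we have $\ba \in \CC^{(n)}_W$ iff $\ba' \in \CC^{(m)}_W$ and $\ba'' \in \CC^{(n-m)}_{W'}$. So it remains to show that, under the stated hypotheses, $\ba' \in \CC^{(m)}_W$ iff $\ba' \in \CC^{(m)}_{W_\Psi}$. The hypothesis that each $\a_j$ ($1\le j\le m$) is ``connected back'' to some earlier $\a_i$ via a nonzero inner product, together with $\a_1 \in \Psi$, forces $[\ba'] \subseteq \Psi$ by induction on $j$ (if $\mpair{\a_i,\a_j}\ne 0$ and $\a_i \in \Psi$, then $\a_j$ is not orthogonal to the component $\Psi$, hence $\a_j \in \Psi$). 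Once $[\ba'] \subseteq \Psi$, part (a) applied to the tuple $\ba'$ and the union of components equal to the single component $\Psi$ yields $\ba' \in \CC^{(m)}_W \iff \ba' \in \CC^{(m)}_{W_\Psi}$, and (b) follows. I should also note the hypothesis $\mpair{\a_i,\a_j}=0$ for $i \le m < j$ is what makes $W_{\ba,m} = W_{\ba',m}$ and keeps the splitting clean, though strictly the proof of the displayed equivalence only needs \eqref{eq:fundchrec} plus the component-membership argument.

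For part (c), I would argue that a subset of $\Phi$ is a simple subsystem iff it is linearly independent and np (as recalled in \S\ref{diag}), and both properties split across the orthogonal decomposition $\linspan(\Psi) \perp (\text{rest})$. Concretely, since $[\ba'] \subseteq \Psi$ and $[\ba'] \perp [\ba'']$ with $[\ba''] \subseteq \Phi_{W''}$ where $W'' = W_{\ba,m} = W'$, the np condition on $[\ba]$ is the conjunction of the np conditions on $[\ba']$ and $[\ba'']$ (the cross inner products vanish by hypothesis, and within each block the conditions are unchanged), and likewise $[\ba]$ is linearly independent iff $[\ba']$ and $[\ba'']$ both are, by orthogonality. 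Matching ``simple subsystem of $\Psi$'' for $[\ba']$ (it lies in $\Psi$) and ``simple subsystem of $\Phi_{W''}$'' for $[\ba'']$ then gives the claim. The main obstacle I anticipate is purely bookkeeping in part (b): being careful that the component-membership argument for $[\ba']$ genuinely uses only the connectivity hypothesis on the first $m$ indices and not anything about $\ba''$, and that the appeal to (a) is with the right choice of ``union of components'' (here the single component $\Psi$). No step requires a genuinely hard idea; the substance is that orthogonality of components makes every relevant condition — stabiliser, chamber membership, linear independence, np-ness — factor as a product.
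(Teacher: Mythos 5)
Your proposal is correct and follows essentially the same route as the paper: part (a) via the orthogonal splitting $W=W_\Psi\times W_{\Phi\sm\Psi}$, giving $\CC_{W'_{\ba,i}}\cap\real\Psi=\CC_{W_{\ba,i}}\cap\real\Psi$; part (b) by combining the recursion \eqref{eq:fundchrec} with (a) (after the connectivity hypothesis forces $[\ba']\seq\Psi$); and part (c) from the orthogonal disjoint union $[\ba]=[\ba']\disjun[\ba'']$. The paper's own proof is a terse sketch of exactly these steps, and your filled-in details are sound.
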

\begin{proof}
 Note that $W'_{\ba,i}=W'\cap W_{\ba,i}$ for $i=1,\ldots, n$.
 Since $\Psi\perp (\Phi\sm \Psi)$, it follows  that 
 $\CC_{W'_{\ba,i}}\cap \real \Psi=\CC_{W_{\ba,i}}\cap \real \Psi$. By the 
 definitions, this   implies (a).  Part (b) follows from \eqref{eq:fundchrec} 
 and (a), and (c) holds since $[\ba]=[\ba']\disjun [\ba'']$ with 
 $[\ba']\perp [\ba'']$.
 \end{proof}
\subsection{} \label{ss:red3} We refer to the genera  $\sigma_{A_n}=\s_{n}$ with $n\geq 1$,
$\s_{B_{n}}$ with $n\geq 3$,  $\s_{C_{n}}$ with $n\geq 2$, $\s_{D_{n}}$ with $n\geq 4$, $\s_{E_{n}}$ with $n=6$, $7$ or $8$, $\s_{F_{4}}$ or $\s_{G_{2}}$
as the standard irreducible genera. If $\s$ is one of these standard irreducible genera, of rank $n$, say, and 
$\bb=(\b_{1},\ldots, \b_{n})\in \CR_{C''}(\s)$  then 
$[\bb]$ is an irreducible simple system and hence it is  entirely contained in one component $\Psi$ of $\Phi$. In fact, $\bb\in \CS_{C''}(\s)\cap \Psi^{n}$.
The stabiliser of $\bb$ in $W$ is the standard parabolic subgroup
generated by $\Pi\cap[\bb]^{\perp}=(\Pi\cap \Psi\cap [\bb]^{\perp})\disjun (\Pi\sm \Psi)$, which may be readily determined by inspection of 
Dynkin diagrams  since (with two types of exceptions which may be easily dealt with), each root $\b\in [\bb]$ is, up to sign, a dominant root for an irreducible subsystem of $\Phi$.

 \subsection{} Given  genera $\r_{i}$  of ranks $n_{i}$ for $i=1,\ldots, k$, define the genus $\r=(\r_{1},\ldots, \r_{k})$ of rank $n=\sum_{i=1}^{k}n_{i}$ as follows.
   Let $N_{i}:=n_{1}+\ldots +n_{i}$ for $i=0,\ldots, k$,
 Say that $\bb:=(\b_{1},\ldots, \b_{n})\in \Phi^{n}$ is of genus $\r$ if
 \begin{conds}
 \item For $i=1,\ldots, k$, $(\b_{N_{i-1}+1},\ldots, \b_{N_{i}})$ is of genus $\r_{i}$.
 \item  For $1\leq i<j\leq k$ and $p,q\in \Nat$ with $N_{i-1}+1\leq p\leq N_{i}$ and $N_{j-1}+1\leq q\leq N_{j}$, one has  $\mpair{\b_{p},\b_{q}}=0$.
 \end{conds}
 
\subsection{}  Fix an arbitrary  total order $\leq$ of  the standard genera  (e.g. order them by decreasing 
 rank, with genera of equal rank ordered in the order in which they were listed 
 at the start of \ref{ss:red3}).  We say that a genus $\rho$ is standard
 if it is of the form $\rho=(\rho_{1},\ldots,\rho_{k})$ where each $\rho_{i}$ is a standard irreducible genus and $\rho_{1}\leq \rho_{2}\leq \ldots\leq \rho_{k}$.
 Then for every isomorphism type $X$ of crystallographic root system of rank $n$,
 there is a unique standard genus $\s_{X}$ of that type. Using Lemma \ref{lem:simpcon},
 for any root system $\Phi$, the elements $\bb$ of 
 $\CR_{C''}(\s_{X})$ can be readily determined.  Every 
 unordered simple subsystem of $\Phi$ of  type $X$ is 
 conjugate to one or more  of the subsystems $[\bb]$ for   
 $\bb\in\CR_{C''}(\s_{X})$.  The determination of the 
 conjugacy classes of simple systems of type $X$ then 
 reduces to the description of the 
  action of $G_{\s_{X}}$ on $\bb\in\CR_{C''}(\s_{X})$; this is  trivial in many cases (e.g. when $\vert\CR_{C''}(\s_{X})\vert=1$ or $\vert G_{\s_{X}}\vert =1$) but can require
  significant computational effort (if calculated by hand) in other situations. 

\subsection{Conclusion} The process outlined above provides an alternative procedure to the
 well known one
(see e.g. \cite{DyLeRef}) which is based on the results of  
Borel and de Siebenthal for determining the isomorphism
 classes of  unordered simple subsystems of $\Phi$.
However, the results here include additional information 
such as explicit representatives of the simple subsystems, 
which can be used in the study of more refined questions
 such as conjugacy of subsystems and whether an isomorphism between two subsystems can be realised as a 
 composite of a specified (diagram) automorphism of $\Phi$ with the action of an element of $W$.  
 The classification of conjugacy classes of simple  subsystems has been completed computationally; for 
instance, it can be found in \cite{Ro} (see also \cite{Osh}). In classical types,  even more 
refined information is readily accessible (e.g.   from the explicit list in \cite{DyRef} 
of all simple  subsystems contained in the positive 
roots together with standard descriptions of the actions 
of the groups involving (signed) permutations).  However, the techniques discussed in this paper provide a uniform conceptual approach,
applicable to all types. Moreover their geometric underpinning, our results on fundamental domains and stratification, have 
more general applicability.

%%%%%%%%%%%%%%%%APPENDIX IS BELOW%%%%%%%%%%%%%%%%%%%%%%%%%%%%%%%%%%%%%%%

 \bibliography{fundconjbibl}
\bibliographystyle{plain}
\end{document}